\documentclass[11pt]{article}

\usepackage[a4paper,
    left=3.2cm,
    right=3.4cm,
    top=3cm,
    bottom=3cm]{geometry}

\usepackage{amsmath, amssymb, amsthm}
\usepackage{xcolor}
\usepackage{xspace}
\usepackage{hyperref}
\usepackage{comment}
\usepackage[numbers,sort&compress]{natbib}

\bibpunct[, ]{[}{]}{,}{n}{,}{,}

\theoremstyle{plain}
\newtheorem{theorem}{Theorem}[section]
\newtheorem{lemma}[theorem]{Lemma}
\newtheorem{corollary}[theorem]{Corollary}

\theoremstyle{definition}
\newtheorem{definition}[theorem]{Definition}

\theoremstyle{remark}
\newtheorem{remark}[theorem]{Remark}

\newcommand{\dx}{\,\mathrm{d}x}
\newcommand{\dt}{\,\mathrm{d}t}
\newcommand{\ds}{\,\mathrm{d}s}
\newcommand{\Pb}{\mbox{\rm (P)}\xspace}
\newcommand{\PbT}{\mbox{\rm (P$_T$)}\xspace}

\newcommand{\Pbr}{\mbox{\rm (P$_\rho$)}\xspace}
\newcommand{\PbTr}{\mbox{\rm (P$_{T,\rho}$)}\xspace}

\newcommand{\Uad}{\mathcal{U}_{ad}}
\newcommand{\UTad}{U_{T,ad}}
\newcommand{\K}{{K_{ad}}}
\newcommand{\proj}{\operatorname{Proj}}
\newcommand{\e}{{\rm e}}

\begin{document}
\date{}

\title{Infinite Horizon Optimal Control Problems with Discount Factors}

\author{Eduardo Casas\thanks{Departamento de Matem\'{a}tica Aplicada y Ciencias de la Computaci\'{o}n, E.T.S.I. Industriales y de Telecomunicaci\'on, Universidad de Cantabria, 39005 Santander, Spain (eduardo.casas@unican.es).}
    \and Karl Kunisch\thanks{Institute for Mathematics and Scientific Computing, University of Graz, Heinrichstrasse 36, A-8010 Graz, Austria,  and Radon Institute, Austrian Academy of Sciences, Linz, Austria (karl.kunisch@uni-graz.at).}}

%

\maketitle

\begin{abstract}
This paper is dedicated to the analysis of infinite horizon optimal control problems subject to semilinear parabolic equations with constraints on the controls and discounted cost functionals. The discount factors on the cost and the state components are allowed to differ from each other. First-order as well as second-order optimality conditions are derived and the importance of allowing different discount factors for the second-order analysis for the class of nonlinearities under consideration is demonstrated. Finally convergence and rate of convergence for the approximation of the infinite horizon problem by a family of finite horizon problems is proven.
\end{abstract}

{\textbf{Keywords}} Semilinear parabolic equations, infinite horizon, discount factor, optimal control, first- and second-order optimality conditions

{\textbf{AMS Code}
35K58,  
49J20, 
49J52, 
}

\pagestyle{myheadings} \thispagestyle{plain} \markboth{E.~CASAS AND K.~KUNISCH}{Infinite Horizon Optimal Control Problems}


\section{Introduction}
\label{S1}

In this paper we analyze the following optimal control problem
\[
   \Pb \  \min_{u \in \Uad} J(u) = \frac{1}{2}\int_0^\infty[\e^{-{\sigma_s}t}\|y_u(t) - y_d(t)\|^2_{L^2(\Omega)} + \nu\e^{-{\sigma_c}t}\|u(t)\|^2_{L^2(\omega)}]\dt,
\]
where $y_u$ denotes the solution of the following parabolic equation:
\begin{equation}
\left\{
\begin{array}{l}
\displaystyle
\frac{\partial y}{\partial t} + Ay + f(y) = g + u\chi_\omega\ \mbox{ in } Q = \Omega \times (0,\infty),\\[0.5ex] \partial_{n_A}y = 0 \ \mbox{ on } \Sigma = \Gamma \times (0,\infty),\   y(0) = y_0 \ \mbox{ in } \Omega.
\end{array}
\right.
\label{E1.1}
\end{equation}
$\Omega$ is a bounded domain in $\mathbb{R}^n$, $1 \le n \le 3$, with a Lipschitz boundary $\Gamma$, $\omega$ is a subdomain of $\Omega$, $\chi_\omega$ denotes the characteristic function of $\omega$, and $y_0 \in L^\infty(\Omega)$. The regularity of the functions $g$ and $y_d$ will be specify in sections \ref{S3} and \ref{S4}, respectively. We also assume that $\nu > 0$ and $\Uad$ is defined by
\[
\Uad = \{u \in L^\infty(0,\infty;L^2(\omega)) : u(t) \in \K \ \text{for a.a. } t \in (0,\infty)\}.
\]
Above $\K$ denotes a closed, convex, and bounded subset of $L^2(\omega)$. Possible choices for $\K$ include
\begin{align}
&\K = B_\gamma = \{v \in L^2(\omega) : \|v\|_{L^2(\omega)} \le \gamma\}, \ 0 < \gamma < \infty,\label{E1.2}\\
&\K = \{v \in L^2(\omega) : \alpha \le v(x) \le \beta \text{ for a.a. } x \in \omega\},\ -\infty < \alpha < \beta < \infty.\label{E1.3}
\end{align}
The exponents $\sigma_s > 0$ and $\sigma_c > 0$ in the cost functional are the  discount factors. In this work,  at  first,  they are chosen independent from each other. Introducing discount factors relaxes the behavior of the tracking error and the control costs as $t\to \infty$, and this effect is increased with the magnitudes of $\sigma_s$ and $\sigma_c$.  Discount factors not only change the nature of the effect of the cost functional on the optimal solutions to $\Pb$, but they also increase the class of nonlinearities $f$ which render $\Pb$ to be well-posed. An alternative to enlarge the class of nonlinearities $f$ is provided by making stabilizability assumptions, as was discussed in \cite{Casas-Kunisch2026}.  We refer to \cite{Rodrigues2020}, for instance, for a general theory on the stabilizability of  semilinear parabolic equations. We recall that there is a distinction between global and local results with respect to the choice of initial conditions. Here we consider  the global case, since $y_0$ will not assumed to lie in an apriori fixed bounded set. We will argue the existence of a solution to $\Pb$ as soon as  $\sigma_s$ is larger than some positive threshold.  For the first- and second-order necessary optimality conditions $\sigma_s$ will need to be chosen in relation to the nonlinear function $f$. The second-order sufficient condition also requires $\sigma_c$ to be strictly smaller than $\sigma_s$; see Theorem \ref{T5.2} for the precise condition.
This condition is necessary to establish the rate of convergence of the family of finite horizon problems posed on the interval $(0,T)$ to the infinite horizon problem $\Pb$ as $T\to \infty$.

We now comment on the importance of infinite horizon problems. First, it is a natural setting for stabilization and asymptotic tracking problems. Further, even if the infinite horizon is not in the focus of formulating an optimal control problem, the specific choice of a finite horizon problem can be accompanied by ambiguity. In this case the formulation as infinite horizon problem together with a convergence analysis of finite horizon problems to the infinite horizon problem can be a viable option. From the practical point of view, the numerical realization of infinite horizon problem presents difficulties. Here, the receding horizon control approach can be useful.

Let us briefly point to selected publications dealing with infinite horizon optimal control problems.
Possibly the first work is that of  Halkin, see \cite{halkin1974}.
In the monograph \cite{CHL1991} the importance of the infinite time horizon for problems in mathematical biology and in economics is stressed. More recent contributions, involving discounted formulations with $\sigma_s=\sigma_c$ or abstract time dependent running costs,  are given in e.g.
\cite{AK2004, AV2012, Basco2024, BCF2018}, where ordinary differential equations are analyzed. Partial differential equations have received much less attention. One chapter is reserved for them both in \cite[Chapter 9]{CHL1991} and \cite[Chapter III.6]{JL1971}. In \cite{BKP2019}, bilinear optimal control problems are investigated.
In these works, no discount factors are involved. The case of partial differential equations with discount factor was treated in \cite{Casas-Kunisch2023B} and the current paper is an extension of that work.

The current paper is a continuation of our efforts in the investigation of infinite horizon optimal control problems. We now comment on this earlier work, first pointing to papers without the use of discounted cost. In \cite{Casas-Kunisch2017} and \cite{Casas-Kunisch2022A} we focused on the long term behavior of the sparsifying  effect of an $L^1$-in time term on the control in the cost. While in \cite{Casas-Kunisch2017} the nonlinearities  are of polynomial type with odd degree, a more general class of nonlinearities is considered in \cite{Casas-Kunisch2022A}.  These papers do not include a second order analysis of the optimal control problems. In \cite{Casas-Kunisch2023C} we take advantage of  $L^\infty$ estimates on the state variables  which allow a successful second-order analysis. In these three papers, the assumption that $f'(s) \ge 0$ in a neighborhood of 0, along with coercivity of the operator $A + f'(0)I$, was necessary to prove the well-posedness of the linearized state equation. Later, in \cite{Casas-Kunisch2026}, this assumption was removed by including stabilizability and detectability conditions which also allowed us to generalize the class of nonlinearities. In \cite{Casas-Kunisch2023B} we started our investigations of optimal control of semilinear equations with discount cost factor. There, the discount factor only affected the state, and the nonlinearities were of a polynomial type with an odd degree. In the present paper, we dispense with this assumption, allowing for a much wider class of nonlinearities. We consider two independent discount factors for the state and control, which may differ. The inclusion of a discount factor for the controls can produce associated states that are not bounded, which leads to technical difficulties in the analysis of the equation and linearized equation. New ideas have been developed to address this issue. We then investigate the interplay between these discount factors. Here, we introduce a new assumption on the nonlinear function $f$; see \eqref{E3.1}. This assumption allows us to prove the well-posedness of the linearized state equation and is satisfied by many nonlinearities, particularly polynomial or exponential functions.

 We should also mention that  infinite horizon  optimal control is related  to the optimal control formulation of stabilization problems. In the latter case, no discounted cost is involved and the focus of that research is distinct from the optimization theoretic one.

The structure of this paper is as follows. Section 2 provides an analysis of the state equation, paying particular attention to the weights introduced by the discount factor. Section 3 focuses on the differentiability properties of the control-to-state mapping, while Section 4 focuses on the differentiability properties of the cost functional and first-order optimality conditions. The second-order analysis of $\Pb$ is given in Section 5 and in Section 6 we present the rate of convergence analysis of a family of finite horizon problems to the infinite horizon problem. The conclusions section summarizes the conditions imposed on the discount factors. This summary is helpful for understanding the paper and for modeling the cost functional.

\subsection{General assumptions on the problem data}

For the nonlinear term $f:\mathbb{R} \longrightarrow \mathbb{R}$ we assume that it is of class $C^2$ and satisfies:
\begin{equation}
\exists \Lambda_f \le 0  \text{ such that } f'(s) \ge \Lambda_f\ \forall s \in \mathbb{R} \ \text{ and }\ f(0) = 0.
\label{E1.4}
\end{equation}

The assumption $f(0) = 0$ is made for simplicity of presentation.  If $f$ does not vanish at 0, we can replace $f$ and $g$ with $f-f(0)$ and $g-f(0)$ respectively, and the above assumption will still hold. We observe that any polynomial $f$ of odd degree with a positive main coefficient satisfies the condition $f'(s) \ge \Lambda_f$. Any non-decreasing monotone function $f$ of class $C^2$ also satisfies this condition. Functions of the form $f(s) = \e^{\zeta(s)}$, where $\zeta$ is an odd degree polynomial with a positive main coefficient, are included in the above formulation.

Regarding the linear operator $A$ we assume that
\[
Ay = -\sum_{i, j = 1}^n\partial_{x_j}[a_{ij}\partial_{x_i}y] + a_0y,
\]
with $a_{i,j}, a_0 \in L^\infty(\Omega)$ and $a_{ij} = a_{ji}$ satisfying for almost all $x \in \Omega$
\begin{equation}
a_0(x) \ge 0\ \text{ and }\ \exists \Lambda_A > 0 \text{ such that }\Lambda_A|\xi|^2 \le \sum_{i, j = 1}^na_{ij}(x)\xi_i\xi_j \ \forall \xi \in \mathbb{R}^n.
\label{E1.7}
\end{equation}
The assumption $a_0(x) \ge 0$ is made by convenience. Indeed, we notice that if $a_0 \not\ge 0$, we can replace $a_0$ and $f$ with $a_0(x) + \|a_0\|_{L^\infty(\Omega)}$ and $f(s) - \|a_0\|_{L^\infty(\Omega)}s$, respectively.

We define the continuous bilinear mapping $a:H^1(\Omega) \times H^1(\Omega) \longrightarrow \mathbb{R}$ by
\begin{equation}
a(y,z) = \int_\Omega\Big[\sum_{i, j = 1}^na_{ij}(x)\partial_{x_i}y(x)\partial_{x_j}z(x) + a_0(x)y(x)z(x)\Big]\dx.
\label{E1.8}
\end{equation}
From \eqref{E1.7} and $a_{i,j}, a_0 \in L^\infty(\Omega)$ we infer the existence of a constant $M_A$ such that
\begin{equation}
\Lambda_A\|\nabla y\|^2_{L^2(\Omega)^n} \le a(y,y) \le M_A\|y\|^2_{H^1(\Omega)}\quad \forall y \in H^1(\Omega).
\label{E1.9}
\end{equation}

Associated with $A$ we introduce the boundary operator:
\[
\partial_{n_A}y(x,t) = \sum_{i, j = 1}^na_{ij}(x)\partial_{x_i}y(x,t)n_j(x),
\]
where $\mathbf{n}(x) = (n_j(x))_{j = 1}^n$ denotes the unit outward normal vector to $\Gamma$ at the point $x$. The reader is referred to \cite[page 511]{Dautray-Lions2000} for a rigorous definition of the normal derivative in a trace sense with $\partial_{n_A} y(\cdot,t) \in H^{-\frac{1}{2}}(\Gamma)$ for almost all $t \in (0,T)$.

\section{Analysis of the state equation}
\label{S2}

In this section we analyze the following equation
\begin{equation}
\left\{
\begin{array}{l}
\displaystyle
\frac{\partial y}{\partial t} + Ay + f(y) = h\ \mbox{ in } Q,\\[0.5ex] \partial_{n_A}y = 0 \ \mbox{ on } \Sigma,\   y(0) = y_0 \ \mbox{ in } \Omega,
\end{array}
\right.
\label{E2.1}
\end{equation}
where $A$, $f$, and $y_0$ satisfy the assumptions established in the section \ref{S1}. Let us introduce some notation. Given a Banach space $X$, for every $\lambda \in \mathbb{R}$ and $r \in [1,\infty]$, we denote by $L^r_\lambda(0,\infty;X)$ the Banach space of measurable functions $y:(0,\infty) \longrightarrow X$ such that $\|y\|_{L^r_\lambda(0,\infty;X)} < \infty$, where
\begin{align*}
&\|y\|_{L^r_\lambda(0,\infty;X)} = \Big(\int_0^\infty\e^{-\lambda t}\|y(t)\|^r_X\dt\Big)^{\frac{1}{r}}\ \text{ for } 1 \le r < \infty,\\
&\|y\|_{L^\infty_\lambda(0,\infty;X)} = \text{ess sup}_{t \in (0,\infty)}\e^{-\frac{\lambda}{2}t}\|y(t)\|_X.
\end{align*}
Further $C_\lambda[0,\infty;X)$ denotes the Banach space of continuous functions $y:[0,\infty) \longrightarrow X$ such that $\|y\|_{C_\lambda([0,\infty;X)} < \infty$, where
\[
\|y\|_{C_\lambda[0,\infty;X)} = \sup_{t \in [0,\infty)}\e^{-\frac{\lambda}{2}t}\|y(t)\|_X.
\]
If $X = L^2(B)$, where $B \subset \mathbb{R}^n$ is a Lebesgue measurable set, we sometimes write $L^2_\lambda(B \times (0,\infty))$ instead of $L^2_\lambda(0,\infty;L^2(B))$ and for $X = C(B)$ we set $C_\lambda(B \times (0,\infty)) = C_\lambda[0,\infty;C(B))$, where $C(B)$ denotes the Banach space of continuous and bounded real functions on $B$.

We also define the Hilbert space
\[
W_\lambda(0,\infty) = \{y \in L^2_\lambda(0,\infty;H^1(\Omega)) : \frac{\partial y}{\partial t} \in L^2_\lambda(0,\infty;H^1(\Omega)^*)\}\]
 endowed with the norm
\[
\|y\|_{W_\lambda(0,\infty)} = \Big(\|y\|^2_{L^2_\lambda(0,\infty;H^1(\Omega))} + \Big\|\frac{\partial y}{\partial t}\Big\|^2_{L^2_\lambda(0,\infty;H^1(\Omega)^*)}\Big)^{\frac{1}{2}}.
\]

If $\lambda = 0$, we simply remove the subindex $\lambda$ as usual. It is immediate that $y \in W_\lambda(0,\infty)$ if and only if $\e^{-\frac{\lambda}{2}t}y \in W(0,\infty)$. Using this fact and the embedding $W(0,\infty) \subset C[0,\infty;L^2(\Omega))$ we infer the continuity of the inclusion $W_\lambda(0,\infty) \subset C_\lambda[0,\infty;L^2(\Omega))$.
For every $\lambda \in \mathbb{R}$ we denote the space
\[
Y_\lambda = C_\lambda[0,\infty;L^2(\Omega)) \cap L^2_\lambda(0,\infty;H^1(\Omega)),
\]
endowed with the norm $\|y\|_{Y_\lambda} = \|y\|_{C_\lambda[0,\infty;L^2(\Omega))} + \|y\|_{L^2_\lambda(0,\infty;H^1(\Omega))}$. Thus, $Y_\lambda$ is a Banach space.
In the sequel $\langle\cdot,\cdot\rangle$ denotes the duality pairing between $H^1(\Omega)^*$ and $H^1(\Omega)$. In the sequel $\langle\cdot,\cdot\rangle$ denotes the duality paring between $H^1(\Omega)^*$ and $H^1(\Omega)$. Along this paper we fix $p$ as follows
\[
\frac{4}{4 - n} < p \le 6\text{ if } n = 2 \text{ or } 3 \text{ and } 2 \le p \le 6 \text{ if } n = 1.
\]

The following lemma is crucial in many proofs of this paper.

\begin{lemma}
Let $X$ be a Banach space and $\lambda, \hat\lambda \in \mathbb{R}$ with $\lambda < \hat\lambda$. Assume that $\{u_k\}_{k = 1}^\infty$ is a bounded sequence in $L^r_\lambda(0,\infty;X)$ for some $r \in [1,\infty]$. If $u \in L^r(0,\infty;X)$ and $\lim_{k \to \infty}\|u_k - u\|_{L^r(0,T;X)} = 0$ for each $T \in (0,\infty)$, then $\lim_{k \to \infty}\|u_k - u\|_{L^r_{\hat\lambda}(0,\infty;X)} = 0$ holds.
\label{L2.1}
\end{lemma}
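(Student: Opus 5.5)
The plan is a standard splitting of the half-line into a bounded interval $(0,T)$ and a tail $(T,\infty)$. On the bounded interval I use the assumed local convergence. On the tail I use the gap $\hat\lambda-\lambda>0$, which turns boundedness in the $\lambda$-weighted norm into smallness in the $\hat\lambda$-weighted norm.

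\emph{Step 1: $u \in L^r_\lambda(0,\infty;X)$.} Let $M = \sup_k\|u_k\|_{L^r_\lambda(0,\infty;X)} < \infty$. For $r<\infty$, fix $T$. Since $u_k \to u$ in $L^r(0,T;X)$, a subsequence satisfies $\|u_{k_j}(t)\|_X \to \|u(t)\|_X$ for a.a.\ $t\in(0,T)$. By a diagonal argument over $T = 1,2,\dots$ we may take the subsequence independent of $T$, so the pointwise convergence holds for a.a.\ $t\in(0,\infty)$. Fatou's lemma then gives
\[
\int_0^\infty \e^{-\lambda t}\|u(t)\|_X^r\dt \le \liminf_{j}\int_0^\infty \e^{-\lambda t}\|u_{k_j}(t)\|_X^r\dt \le M^r .
\]
For $r=\infty$, the bound $\e^{-\frac{\lambda}{2}t}\|u(t)\|_X\le M$ for a.a.\ $t\in(0,T)$ passes directly to the $L^\infty(0,T;X)$-limit, for every $T$. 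Hence $\|u_k-u\|_{L^r_\lambda(0,\infty;X)} \le 2M$ for all $k$. If the hypothesis $u\in L^r(0,\infty;X)$ is read literally, this step is what makes the weighted norm of $u$ meaningful, in particular when $\lambda<0$.

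\emph{Step 2: splitting for $r<\infty$.} For any $T>0$,
\[
\|u_k-u\|^r_{L^r_{\hat\lambda}(0,\infty;X)} \le \e^{\max(0,-\hat\lambda)T}\|u_k-u\|^r_{L^r(0,T;X)} + \e^{-(\hat\lambda-\lambda)T}\int_T^\infty \e^{-\lambda t}\|u_k(t)-u(t)\|_X^r\dt .
\]
The second term uses $\e^{-\hat\lambda t} = \e^{-(\hat\lambda-\lambda)t}\e^{-\lambda t} \le \e^{-(\hat\lambda-\lambda)T}\e^{-\lambda t}$ for $t\ge T$, and so it is bounded by $\e^{-(\hat\lambda-\lambda)T}(2M)^r$. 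Given $\varepsilon>0$, I first choose $T$ so that this tail bound is below $\varepsilon/2$. With $T$ fixed, I then choose $k_0$ so that the first term is below $\varepsilon/2$ for $k\ge k_0$, which is possible by the hypothesis on $(0,T)$.

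\emph{Step 3: the case $r=\infty$.} The argument is the same with essential suprema:
\[
\|u_k-u\|_{L^\infty_{\hat\lambda}(0,\infty;X)} \le \max\Big\{\e^{\max(0,-\hat\lambda)T/2}\|u_k-u\|_{L^\infty(0,T;X)},\ \e^{-\frac{\hat\lambda-\lambda}{2}T}\,2M\Big\}.
\]

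I expect the only delicate point to be Step 1, namely ensuring that $u$ lies in the $\lambda$-weighted space so that the tail can be controlled uniformly in $k$. Everything after that is the elementary $\varepsilon/2$ argument.
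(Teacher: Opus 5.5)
Your proof is correct and follows the same route as the paper. You split at a time $T$, bound the tail uniformly by $\e^{-(\hat\lambda-\lambda)T}$ times a uniform bound on $\|u_k-u\|_{L^r_\lambda(0,\infty;X)}$, and use the local convergence on $(0,T)$. Your Step 1 is a worthwhile addition: the paper simply sets $C=\sup_k\|u_k-u\|_{L^r_\lambda(0,\infty;X)}$ without checking that $u\in L^r_\lambda(0,\infty;X)$, which does not follow from $u\in L^r(0,\infty;X)$ when $\lambda<0$, a case the paper later uses with a nonzero limit (e.g.\ $\lambda=-\delta/4$ in the proof of Theorem~\ref{T5.2}).
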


\begin{proof}
Let $C = \sup_{1 \le k}\|u_k-u\|_{L^r_\lambda(0,\infty;X)}$. Given $\varepsilon > 0$ we select $T_{\varepsilon} < \infty$ such that $C\e^{\frac{\lambda - \hat\lambda}{r}T_\varepsilon} < \varepsilon$ if $r < \infty$. For $r = \infty$ we assume that $C\e^{\frac{\lambda - \hat\lambda}{2}T_\varepsilon} < \varepsilon$. Then, we have
\begin{align*}
&\text{if } r < \infty, \text{then } \|u_k - u\|_{L^r_{\hat\lambda}(T_\varepsilon,\infty;X)} \le \e^{\frac{\lambda - \hat\lambda}{r}T_\varepsilon}\|u_k - u\|_{L^r_\lambda(T_\varepsilon,\infty;X)} \le C\e^{\frac{\lambda - \hat\lambda}{r}T_\varepsilon} < \varepsilon,\\
&\text{if } r = \infty, \text{then } \|u_k - u\|_{L^r_{\hat\lambda}(T_\varepsilon,\infty;X)} \le \e^{\frac{\lambda - \hat\lambda}{2}T_\varepsilon}\|u_k - u\|_{L^r_\lambda(T_\varepsilon,\infty;X)} \le C\e^{\frac{\lambda - \hat\lambda}{2}T_\varepsilon} < \varepsilon.
\end{align*}
The assumptions of the lemma and these inequalities yield
\[
\limsup_{k \to \infty}\|u_k - u\|_{L^r_{\hat\lambda}(0,\infty;X)} \le \limsup_{k \to \infty}\|u_k - u\|_{L^r_{\hat\lambda}(0,T_\varepsilon;X)} + \limsup_{k \to \infty}\|u_k - u\|_{L^r_{\hat\lambda}(T_\varepsilon,\infty;X)} \le \varepsilon.
\]
Since $\varepsilon$ can be arbitrarily small, the claim of the lemma follows.
\end{proof}

\begin{definition}
We call $y$ a solution to \eqref{E2.1} if $y \in W(0,T)$ and $f(y) \in L^2(Q_T)$ for every $T \in  (0,\infty)$, where $Q_T = \Omega \times (0,T)$, and $y$ satisfies the following equation in the usual variational sense
\begin{equation}
\left\{
\begin{array}{l}
\displaystyle
\frac{\partial y}{\partial t} + Ay + f(y) = h \
\mbox{ in } Q_T,\\[0.5ex]
\partial_{n_A}y = 0 \ \mbox{ on } \Sigma_T = \Gamma \times (0,T),\ \ y(0) = y_0 \ \mbox{ in } \Omega.
\end{array}
\right.
\label{E2.2}
\end{equation}
\label{D2.1}
\end{definition}
See, for instance, \cite[pages 136--137]{Lad-Sol-Ura68} or \cite[page 108]{Showalter1997} for the definition of a variational solution (or generalized solution) of \eqref{E2.2}.

The next theorem establishes the existence, uniqueness, and some regularity of the solution.

\begin{theorem}
Assume that $-2\Lambda_f < \lambda < \infty$ and $h \in L^2_\lambda(Q)$. Then, Equation \eqref{E2.1} has a unique solution $y \in W_\lambda(0,\infty)$ with $f(y) \in L^2_\lambda(Q)$, and the following estimates hold
\begin{align}
&\|y\|_{C_\lambda[0,\infty;L^2(\Omega))} + \sqrt{\min\{\frac{\lambda}{2} + \Lambda_f,2\Lambda_A\}}\|y\|_{L^2_\lambda(0,\infty;H^1(\Omega))}\notag\\
& \le 2\|y_0\|_{L^2(\Omega)} + \frac{2\sqrt{2}}{\sqrt{\lambda + 2\Lambda_f}}\|h\|_{L^2_\lambda(Q)},\label{E2.3}\\
&\|f(y)\|_{L^2_\lambda(Q)} \le C_{f,\lambda,\Omega}\Big(\|y_0\|_{L^\infty(\Omega)} + \|h\|_{L^2_\lambda(Q)}\Big),\label{E2.4}\\
&\|y\|_{W_\lambda(0,\infty)} \le M_{f,\lambda,\Omega}\Big(\|y_0\|_{L^\infty(\Omega)} + \|h\|_{L^2_\lambda(Q)}\Big),\label{E2.5}\\
&\lim_{t \to \infty}\e^{-\frac{\lambda}{2} t}\|y(t)\|_{L^2(\Omega)} = 0.\label{E2.6}
\end{align}
Moreover, if $h \in L^p_\lambda(0,\infty;L^2(\Omega))$, then we have
\begin{equation}
\|y\|_{L^\infty_\lambda(Q)} \le K_{f,\lambda,\Omega}\Big(\|y_0\|_{L^\infty(\Omega)} + \|h\|_{L^p_\lambda(0,\infty;L^2(\Omega))}\Big).\label{E2.7}
\end{equation}
For each $\hat\lambda > -2\Lambda_f$,  the constants $C_{f,\lambda,\Omega}$, $M_{f,\lambda,\Omega}$, and $K_{f,\lambda,\Omega}$ are uniformly bounded for $\lambda \ge \hat\lambda$.

\label{T2.1}
\end{theorem}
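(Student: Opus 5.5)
The plan is to reduce to the unweighted case through the substitution $z = \e^{-\frac{\lambda}{2}t}y$, and to obtain solutions on each finite interval $(0,T)$ from the standard theory.

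\textbf{Step 1: existence on finite intervals.} For each $T<\infty$, the classical theory for monotone-perturbed semilinear parabolic equations gives a unique solution on $(0,T)$. Indeed, $f' \ge \Lambda_f$ makes $f(s)-\Lambda_f s$ nondecreasing, so one can truncate $f$, obtain approximate solutions and pass to the limit. Uniqueness follows from Gronwall applied to the difference of two solutions, using $(f(y_1)-f(y_2))(y_1-y_2)\ge \Lambda_f|y_1-y_2|^2$. Compatibility across $T$ then yields a global solution $y$ with $y\in W(0,T)$ for every $T$.

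\textbf{Step 2: the energy estimate \eqref{E2.3}.} Test the equation with $\e^{-\lambda t}y$. With $\phi(t)=\e^{-\lambda t}\|y(t)\|^2_{L^2(\Omega)}$ this gives
\[
\frac12\phi' + \frac{\lambda}{2}\phi + \e^{-\lambda t}\big[a(y,y) + (f(y),y)\big] = \e^{-\lambda t}(h,y).
\]
Since $f(0)=0$, we have $f(s)s\ge \Lambda_f s^2$. Together with \eqref{E1.9}, the left-hand side dominates $\frac12\phi' + (\frac{\lambda}{2}+\Lambda_f)\phi + \Lambda_A\e^{-\lambda t}\|\nabla y\|^2$. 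Split $\frac{\lambda}{2}+\Lambda_f$ in half, and absorb the right-hand side by Young's inequality against $\frac12(\frac{\lambda}{2}+\Lambda_f)\phi$. Integrating over $(0,t)$ and recombining the $L^2$ and gradient parts into the $H^1$ norm with the weight $\min\{\frac{\lambda}{2}+\Lambda_f,2\Lambda_A\}$ gives \eqref{E2.3}, up to bookkeeping of the constants $2$ and $2\sqrt2$.

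\textbf{Step 3: \eqref{E2.6}.} First check that $\phi$ is integrable and that the integrated identity makes $\phi(t)$ converge. Its limit must then be zero, since $\int_0^\infty \e^{-\lambda t}\|y\|^2\,dt<\infty$.

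\textbf{Step 4: the $L^\infty$ estimate \eqref{E2.7}.} This is the main obstacle, and \eqref{E2.4} depends on it. The plan is to write $z=\e^{-\frac{\lambda}{2}t}y$, which satisfies
\[
z_t + Az + \tfrac{\lambda}{2}z + \e^{-\frac{\lambda}{2}t}f(\e^{\frac{\lambda}{2}t}z) = \e^{-\frac{\lambda}{2}t}h.
\]
The nonlinear term $\tilde f(t,s)=\e^{-\frac{\lambda}{2}t}f(\e^{\frac{\lambda}{2}t}s)+\frac{\lambda}{2}s$ satisfies $\partial_s\tilde f\ge \Lambda_f+\frac{\lambda}{2}>0$, so it is uniformly strongly monotone. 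A Stampacchia truncation argument on unit time windows $(k,k+2)$ then bounds $\|z\|_{L^\infty}$ uniformly in $k$. The ingredients are testing with $(z-\kappa)^+$, the sign condition $\tilde f(t,s)s\ge c s^2$, and the choice $p>4/(4-n)$, which gives integrability of the right-hand side in $L^p(L^2)$. The bound is expressed through $\|y_0\|_\infty$ and $\|h\|_{L^p_\lambda(L^2)}$, and the dissipation $c>0$ keeps the bound on each window from growing with $k$.

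\textbf{Step 5: \eqref{E2.4} and \eqref{E2.5}.} \eqref{E2.4} is the step I would try hardest to get right, because with only $h\in L^2_\lambda$ the state need not be bounded. My approach is to test the equation with $\e^{-\lambda t}f(y)$. One has $(Ay,f(y))\ge \Lambda_f a(y,y)$-type terms from $f'\ge\Lambda_f$. The time-derivative term gives $\frac{d}{dt}\int F(y)$ with $F'=f$, and $F\ge \frac{\Lambda_f}{2}s^2$. Combined with \eqref{E2.3}, this controls $\|f(y)\|_{L^2_\lambda(Q)}$ by $\int_\Omega F(y_0)\,dx\le C(\|y_0\|_\infty)$ and $\|h\|$. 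The initial term is the reason $\|y_0\|_{L^\infty}$ appears in the bound. Estimate \eqref{E2.5} then follows by reading $\partial_t y = h - Ay - f(y)$ in $L^2_\lambda(H^1(\Omega)^*)$.

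\textbf{Uniformity of the constants.} All constants depend on $\lambda$ only through $(\lambda+2\Lambda_f)^{-1}$ and quantities that are monotone in $\lambda$, which gives the uniformity claimed for $\lambda\ge\hat\lambda$.
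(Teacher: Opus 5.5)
There is one step that does not work as written: Step 5, the proof of \eqref{E2.4}. Your idea there is the paper's: a weighted test with the nonlinearity, the primitive $F\ge\frac{\Lambda_f}{2}s^2$, $a(y,f(y))\ge\Lambda_f a(y,y)$, and $\int_\Omega F(y_0)\dx$ as the source of $\|y_0\|_{L^\infty(\Omega)}$. Your \eqref{E2.3} and \eqref{E2.5} also coincide with the paper's.

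The problem is admissibility of the test function. You note yourself that for $h\in L^2_\lambda(Q)$ the state need not be bounded. Then $f(y)$ need not lie in $L^2(0,T;H^1(\Omega))$; for $f=\e^{\zeta}$, $\nabla f(y)=f'(y)\nabla y$ is uncontrolled. So $\e^{-\lambda t}f(y)$ is not an admissible test function, and neither $a(y,f(y))$ nor the chain rule $\langle\partial_t y,f(y)\rangle=\frac{d}{dt}\int_\Omega F(y)\dx$ is justified. Your remark in Step 4 that \eqref{E2.4} depends on the $L^\infty$ bound does not close this. The bound \eqref{E2.7} needs $h\in L^p_\lambda(0,\infty;L^2(\Omega))$, which is not assumed for \eqref{E2.4}. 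In the paper, \eqref{E2.4} is proved before and independently of \eqref{E2.7}, and your Step 4 only needs $f(y)\in L^2(Q_T)$, which is part of the definition of solution.

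The missing ingredient is the truncation you already use in Step 1. Set $f_k=\proj_{[-k,k]}\circ f$, which is bounded and Lipschitz, so $f_k(y)\in L^2(0,T;H^1(\Omega))$ is admissible. Since $\Lambda_f\le 0$, you still have $f_k'\ge\Lambda_f$ a.e. and $f_k(s)s\ge\Lambda_f s^2$, so your lower bounds survive with $F_k$ in place of $F$. The one new point is $f_k(y)^2\le f(y)f_k(y)$. It turns $\int_{Q_T}\e^{-\lambda t}f(y)f_k(y)\dx\dt$ into control of $\|f_k(y)\|_{L^2_\lambda(Q_T)}$ uniformly in $k$ and $T$, and monotone convergence then gives \eqref{E2.4}; this is exactly the paper's Step II. Alternatively, approximate $h$ by $h_n\in L^p_\lambda(0,\infty;L^2(\Omega))$, run your argument on the bounded states $y_n$, and pass to the limit using $L^2_\lambda$-stability of the equation and Fatou's lemma.

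A smaller ordering point concerns Step 3. Convergence of $\phi(t)$ requires convergence of $\int_0^t\e^{-\lambda s}(f(y(s)),y(s))\ds$, which you only have after \eqref{E2.4}. Alternatively, use that $(f(y),y)-\Lambda_f\|y\|^2_{L^2(\Omega)}\ge 0$, which makes that part of the integral monotone and bounded.

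Your other steps are valid but take different routes from the paper:
\begin{itemize}
\item Existence and uniqueness: the paper quotes them from a reference rather than re-deriving them.
\item Estimate \eqref{E2.6}: the paper obtains it from $\e^{-\frac{\lambda}{2}t}y\in W(0,\infty)$ and the decay of $W(0,\infty)$ functions. Your integrated energy identity is more elementary.
\item Estimate \eqref{E2.7}: the paper runs the level-set iteration globally on $(0,t)$ for the truncation of $\e^{-\frac{\lambda}{2}t}y$. Its constants are $t$-independent because $\min\{\frac{\lambda}{2}+\Lambda_f,\Lambda_A\}$ controls the full $H^1$ norm inside the iteration. Your window argument instead needs a time cut-off vanishing at $t=k$ and equal to $1$ on $[k+1,k+2]$. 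It uses the dissipation only through \eqref{E2.3}, which bounds $\|\e^{-\frac{\lambda}{2}t}y\|_{L^2(\Omega\times(k,k+2))}$ uniformly in $k$. This is more modular and equally valid.
\end{itemize}
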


\begin{proof}
The existence and uniqueness of a solution follows from \cite{CW2023}. The proof of the regularity of $y$ and the corresponding estimates are split in five steps.\vspace{2mm}

{\em Step I: $y \in Y_\lambda$.} Given $T \in (0,\infty)$ arbitrary, testing the equation \eqref{E2.2} with $\e^{-\lambda t}y$ and integrating in $Q_T$ we obtain:
\begin{align}
&\frac{1}{2}\int_0^T\e^{-\lambda t}\frac{d}{dt}\|y(t)\|^2_{L^2(\Omega)}\dt + \int_0^T\e^{-\lambda t}a(y(t),y(t))\dt + \int_{Q_T}\e^{-\lambda t}f(y)y\dx\dt\notag\\
&= \int_{Q_T}\e^{-\lambda t}hy\dx\dt \le \|h\|_{L^2_\lambda(Q)}\|y\|_{L^2_\lambda(Q)}.\label{E2.8}
\end{align}
Integrating by parts we get
\[
\int_0^T\e^{-\lambda t}\frac{d}{dt}\|y(t)\|^2_{L^2(\Omega)}\dt = \e^{-\lambda T}\|y(T)\|^2_{L^2(\Omega)} - \|y_0\|^2_{L^2(\Omega)} + \lambda\int_0^T\e^{-\lambda t}\|y(t)\|^2_{L^2(\Omega)}\dt.
\]
Inserting this identity in \eqref{E2.8}, using \eqref{E1.4} and \eqref{E1.9}, and applying the mean value theorem to the function $f$ we obtain
\begin{align}
&\frac{1}{2}\e^{-\lambda T}\|y(T)\|^2_{L^2(\Omega)} + \Big(\frac{\lambda}{2} + \Lambda_f\Big)\int_0^T\e^{-\lambda t}\|y(t)\|^2_{L^2(\Omega)}\dt + \Lambda_A\int_0^T\e^{-\lambda t}\|\nabla y(t)\|^2_{L^2(\Omega)^n}\dt\notag\\
&\le \frac{1}{2}\|y_0\|^2_{L^2(\Omega)} + \|h\|_{L^2_\lambda(Q)}\|y\|_{L^2_\lambda(Q_T)}\notag\\
&  \le \frac{1}{2}\|y_0\|^2_{L^2(\Omega)} + \frac{1}{\lambda + 2\Lambda_f}\|h\|^2_{L^2_\lambda(Q)} + \frac{1}{2}\Big(\frac{\lambda}{2} + \Lambda_f\Big)\|y\|^2_{L^2_\lambda(Q_T)}.
\label{E2.9}
\end{align}
From this inequality we infer
\begin{align*}
&\e^{-\frac{\lambda}{2}T}\|y(T)\|_{L^2(\Omega)} + \sqrt{\min\{\frac{\lambda}{2} + \Lambda_f,2\Lambda_A\}}\|y\|_{L^2_\lambda(0,T;H^1(\Omega))}\\
& \le \|y_0\|_{L^2(\Omega)} + \frac{\sqrt{2}}{\sqrt{\lambda + 2\Lambda_f}}\|h\|_{L^2_\lambda(Q)} .
\end{align*}
Since $T$ was taking arbitrarily large, this inequality implies \eqref{E2.3}.\vspace{2mm}

{\em Step II: $f(y) \in L^2_\lambda(Q)$.} For every integer $k \ge 1$ we define the functions $f_k(s) = \proj_{[-k,+k]}(f(s))$. Since $f_k$ is a bounded and Lipschitz function we deduce from {\em Step I} that $f_k(y) \in L^2(0,T;H^1(\Omega))$ for every $T < \infty$. Testing the equation \eqref{E2.2} with $\e^{-\lambda t}f_k(y)$ and integrating in $Q_T$ we obtain:
\begin{align}
&\int_0^T\e^{-\lambda t}\langle\frac{\partial y}{\partial t},f_k(y)\rangle \dt + \int_0^T\e^{-\lambda t}a(y(t),f_k(y(t)))\dt + \int_{Q_T}\e^{-\lambda t}f(y(t))f_k(y(t))\dx\dt\notag\\
&= \int_{Q_T}\e^{-\lambda t}hf_k(y)\dx\dt \le \|h\|_{L^2_\lambda(Q)}\Big(\int_0^T\e^{-\lambda t}\|f_k(y(t))\|^2_{L^2(\Omega)}\dt\Big)^{\frac{1}{2}}\notag\\
&\le \frac{1}{2}\|h\|^2_{L^2_\lambda(Q)} + \frac{1}{2}\int_0^T\e^{-\lambda t}\|f_k(y(t))\|^2_{L^2(\Omega)}\dt.\label{E2.10}
\end{align}

We consider the function $F_k:\mathbb{R} \longrightarrow \mathbb{R}$, defined by $F_k(\rho) = \int_0^\rho f_k(s)\ds$. Using the mean value theorem and \eqref{E1.4} we get
\begin{equation}
F_k(\rho) = \int_0^\rho f_k'(\theta_s s)s\ds \ge \frac{\Lambda_f}{2}\rho^2.
\label{E2.11}
\end{equation}
Now we have
\begin{align}
&\int_0^T\e^{-\lambda t}\langle\frac{\partial y}{\partial t},f_k(y)\rangle \dt = \int_0^T\e^{-\lambda t}\frac{d}{dt}\int_\Omega F_k(y(t))\dx\dt = \e^{-\lambda T}\int_\Omega F_k(y(x,T))\dx\notag\\
& - \int_\Omega F_k(y_0(x))\dx + \lambda\int_0^T\e^{-\lambda t}\int_\Omega F_k(y(x,t))\dx\dt.\label{E2.12}
\end{align}
From \eqref{E2.11} we deduce that $\int_\Omega F_k(y(x,t))\dx \ge \frac{\Lambda_f}{2}\|y(t)\|^2_{L^2(\Omega)}$ for all $t \in (0,T]$. We also have
\[
\int_\Omega |F_k(y_0(x))|\dx \le |\Omega|\sup_{|s| \le \|y_0\|_{L^\infty(\Omega)}}|f(s)|\|y_0\|_{L^\infty(\Omega)} = |\Omega|C_{f,y_0}\|y_0\|_{L^\infty(\Omega)}.
\]
Inserting the last two inequalities in \eqref{E2.12} we infer
\begin{align}
&\int_0^T\e^{-\lambda t}\langle\frac{\partial y}{\partial t},f_k(y)\rangle \dt \notag\\
& \ge \e^{-\lambda T}\frac{\Lambda_f}{2}\|y(T)\|^2_{L^2(\Omega)} + \frac{\lambda\Lambda_f}{2}\int_0^T\e^{-\lambda t}\|y(t)\|^2_{L^2(\Omega)}\dt - |\Omega|C_{f,y_0}\|y_0\|_{L^\infty(\Omega)}\notag\\
&\ge \frac{\Lambda_f}{2}\Big(\|y\|^2_{C_\lambda[0,\infty;L^2(\Omega)} + \lambda\|y\|^2_{L^2_\lambda(0,\infty;L^2(\Omega))}\Big)  - |\Omega|C_{f,y_0}\|y_0\|_{L^\infty(\Omega)}.\label{E2.13}
\end{align}
Using \eqref{E1.4} and \eqref{E1.9} we get with the mean value theorem
\begin{align*}
&\int_0^T\e^{-\lambda t}a(y(t),f_k(y(t)))\dt = \int_0^T\e^{-\lambda t}\int_\Omega\Big[f_k'(y)\sum_{i,j = 1}^na_{ij}\partial_{x_i}y\partial_{x_j}y + a_0f_k(y)y\Big]\dx\dt\\
& \ge \Lambda_f\int_0^\infty\e^{-\lambda t}a(y(t),y(t))\dt.
\end{align*}
From the last inequality, \eqref{E2.10}, \eqref{E2.13}, \eqref{E2.3}, and  $f^2_k(y) \le f(y)f_k(y)$ we infer that
\[
\int_0^T\e^{-\lambda t}\|f_k(y)\|^2_{L^2(\Omega)}\dt \le C_{f,\lambda,\Omega}\Big(\|y_0\|_{L^\infty(\Omega)} + \|h\|_{L^2_\lambda(Q)}\Big)\quad \forall T < \infty.
\]
This implies that the equality is also valid for $T = \infty$. Finally, taking the limit as $k \to \infty$ and applying the monotone convergence theorem we get \eqref{E2.4}.\vspace{2mm}

{\em Step III: $y \in W_\lambda(0,\infty)$.} We infer from the partial differential equation \eqref{E2.1} and the Steps I and II that
\[
\e^{-\frac{\lambda}{2}t}\frac{\partial y}{\partial t} = \e^{-\frac{\lambda}{2}t}(h - f(y)) - A(\e^{-\frac{\lambda}{2}t}y) \in L^2(0,\infty;H^1(\Omega)^*).
\]
Then, we get that
\[
\frac{\partial(\e^{-\frac{\lambda}{2}t}y)}{\partial t} = \e^{-\frac{\lambda}{2}t}\frac{\partial y}{\partial t} - \lambda\e^{-\frac{\lambda}{2}t}y \in L^2(0,\infty;H^1(\Omega)^*).
\]
Estimate \eqref{E2.5} follows from \eqref{E2.3}-\eqref{E2.4} and the above identities.\vspace{2mm}

{\em Step IV: $\lim_{t \to \infty}\e^{-\frac{\lambda}{2} t}\|y(t)\|_{L^2(\Omega)} = 0$.} Since $y \in W_\lambda(0,\infty)$, then we have that $\e^{-\frac{\lambda}{2}t}y \in W(0,\infty)$. As any element in this space it satisfies $\lim_{t \to \infty}\|\e^{-\frac{\lambda}{2}t}y(t)\|_{L^2(\Omega)} = 0$; see \cite[Theorem 2.4]{Casas-Kunisch2022A}.

{\em Step V: $y \in L^\infty_\lambda(Q)$.} The proof follows the steps of \cite[Theorem A.2]{Casas-Kunisch2023C}. We simply indicate how to integrate the term $\e^{-\lambda t}$ in the proof for the cases $n = 2$ or 3, proceeding analogously for $n = 1$. First, we take $\rho \ge \|y_0\|_{L^\infty(\Omega)}$ and set $y_\rho(x,t) = \e^{-\frac{\lambda}{2}t}y(x,t) - \proj_{[-\rho,+\rho]}(\e^{-\frac{\lambda}{2}t}y(x,t))$ and $A_\rho(t) = \{x \in \Omega : \e^{-\frac{\lambda}{2}t}|y(x,t)| > \rho\}$. Following the notation in \cite[Theorem A.2]{Casas-Kunisch2023C}, we take $\alpha \in \big(\frac{pn}{2p-4},\frac{n}{n - 2}\big)$. We have that $\e^{-\frac{\lambda}{2}t}a(y,y_\rho) \ge a(y_\rho,y_\rho)$ and
\[
\e^{-\frac{\lambda}{2}t}\Big[\frac{\partial y}{\partial t}y_\rho + f(y)y_\rho\Big]= \frac{\partial(\e^{-\frac{\lambda}{2}t}y)}{\partial t}y_\rho + [\frac{\lambda}{2} + f'(\theta y)]\e^{-\frac{\lambda}{2}t}yy_\rho \ge  \frac{\partial y_\rho}{\partial t}y_\rho + [\frac{\lambda}{2} + \Lambda_f]y^2_\rho.
\]
Since $y$ satisfies \eqref{E2.2} for every $T < \infty$, we have that $y \in H^1(\Omega \times (T_0,T))$ for any $0 < T_0 < T < \infty$; see \cite[Corollary 2.4]{Showalter1997}. Hence, the above relations make sense pointwise almost everywhere.

Testing equation \eqref{E2.2} with $\e^{-\lambda t}y_\rho$ and using the above identities, \eqref{E1.9}, and integrating in $[0,t]$ for any $t \in (0,T]$ and using that $\frac{p}{2} > 1$ we get wtih $B = \|h\|_{L_\lambda^p(0,\infty;L^2(\Omega))}$
\begin{align*}
&\frac{1}{2}\|y_\rho(t)\|^2_{L^2(\Omega)} + [\frac{\lambda}{2} + \Lambda_f]\int_{Q_t}y^2_\rho\dx\ds + \Lambda_A\int_{Q_t}|\nabla y_\rho|^2\dx\ds \le \int_{Q_t}\e^{-\frac{\lambda}{2}s}hy_\rho\dx\ds\\
&\le B\|y_\rho\|_{L^{p'}(0,t;L^2(\Omega))}\ds \le \Big(\int_0^t\|y_\rho(s)\|^{p'}_{L^2(A_\rho(s))}\ds\Big)^{\frac{1}{p'}}\\
& \le B\Big(\int_0^t\|y_\rho(s)\|^{p'}_{L^{2\alpha}(A_\rho(s))}|A_\rho(s)|^{\frac{p'}{2\alpha'}}\ds\Big)^{\frac{1}{p'}}\\
&\le C_1B\Big(\int_0^t\|y_\rho(s)\|^2_{H^1(\Omega)}\ds\Big)^{\frac{1}{2}}\Big(\int_0^t|A_\rho(s)|^{\frac{p'}{\alpha'(2 - p')}}\ds\Big)^{\frac{2 - p'}{2p'}}\\
&\le \frac{\min\{\frac{\lambda}{2} + \Lambda_f,\Lambda_A\}}{2}\int_0^t\|y_\rho(s)\|^2_{H^1(\Omega)}\ds\\
& + \frac{C^2_1B^2}{\min\{\lambda + 2\Lambda_f,2\Lambda_A\}}\Big(\int_0^t|A_\rho(s)|^{\frac{p'}{\alpha'(2 - p')}}\ds\Big)^{\frac{2 - p'}{p'}},\end{align*}
where $C_1$ is the norm of the continuous emebedding $H^1(\Omega) \subset L^{2\alpha}(\Omega)$. This leads to
\[
\Big(\|y_\rho(t)\|^2_{L^2(\Omega)} + \|y_\rho\|^2_{L^2(0,t;H^1(\Omega))}\Big)^{\frac{1}{2}} \le CB\Big(\int_0^t|A_\rho(s)|^{\frac{p}{\alpha'(p - 2)}}\ds\Big)^{\frac{p - 2}{2p}}
\]
for some constant $C$ independent of $t$ and $\rho$. This establishes the inequality (A.8) of \cite[Theorem A.2]{Casas-Kunisch2023C} and the rest of the proof is the same to infer \eqref{E2.7}.
\end{proof}

We finish this section by studying the continuity of the solution of \eqref{E2.1} with respect to the right hand side $h$.

\begin{remark}
It is immediate that $L^2_{\lambda_1}(Q) \subset L^2_{\lambda_2}(Q)$ if $\lambda_1 < \lambda_2$ and $\|y\|_{L^2_{\lambda_2}(Q)} \le \|y\|_{L^1_{\lambda_2}(Q)}$. Consequently, if $h \in L^2_{\lambda_0}(Q)$ for $\lambda_0 \le -2\Lambda_f$, then $h \in L^2_\lambda(Q)$ for every $\lambda > -2\Lambda_f$ and by Theorem \ref{T2.1} we infer that the solution $y$ of \eqref{E2.1} belongs to $W_\lambda(0,\infty)$ and $f(y) \in L^2_\lambda(Q)$ for every $\lambda > -2\Lambda_f$. Moreover, the estimates \eqref{E2.3}--\eqref{E2.6} hold. The same comment applies to spaces $L^p_\lambda(0,\infty;L^2(\Omega))$ and estimate \eqref{E2.7}.
\label{R2.1}
\end{remark}

\begin{lemma}
Let us assume that $\hat\lambda > \lambda > -2\Lambda_f$ and $h_k \rightharpoonup h$ in $L^2_\lambda(Q)$. Let $y_k$ and $y$ be the solutions of \eqref{E2.1} associated with $h_k$ and $h$, respectively. Then, we have $y_k \rightharpoonup y$ in $Y_\lambda$, $f(y_k) \rightharpoonup f(y)$ in $L^2_\lambda(Q)$, and $y_k \to y$ in $Y_{\hat\lambda}$. In addition, if $h_k \rightharpoonup h$ in $L^p_\lambda(0,\infty;L^2(\Omega))$, then $\lim_{k \to \infty}\|y_k - y\|_{L^\infty_{\hat\lambda}(Q)} = 0$ and $f(y_k) \to f(y)$ in $L^2_{\hat\lambda}(Q)$.
\label{L2.2}
\end{lemma}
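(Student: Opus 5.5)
Since $\{h_k\}$ converges weakly, it is bounded in $L^2_\lambda(Q)$. Theorem \ref{T2.1} then makes $\{y_k\}$ bounded in $W_\lambda(0,\infty)$ and $\{f(y_k)\}$ bounded in $L^2_\lambda(Q)$. Hence $\{y_k\}$ is bounded in $Y_\lambda$, because $W_\lambda(0,\infty)\subset C_\lambda[0,\infty;L^2(\Omega))$ continuously.

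\emph{Identifying the limit.} Fix $T<\infty$. On $(0,T)$ the sequence is bounded in $W(0,T)$, so the Aubin--Lions lemma gives, for a subsequence, $y_k\to \tilde y$ strongly in $L^2(Q_T)$ and a.e. in $Q_T$. A diagonal argument over $T\in\mathbb{N}$ makes this hold for every $T$. Since $f$ is continuous, $f(y_k)\to f(\tilde y)$ a.e., and boundedness in $L^2_\lambda(Q)$ then gives $f(y_k)\rightharpoonup f(\tilde y)$ in $L^2(Q_T)$ and in $L^2_\lambda(Q)$. Passing to the limit in the variational formulation on each $Q_T$ shows that $\tilde y$ solves \eqref{E2.1} with right-hand side $h$. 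By uniqueness $\tilde y=y$, so the whole sequence converges. The weak convergence $y_k\rightharpoonup y$ in $W_\lambda(0,\infty)$ then yields weak convergence in $Y_\lambda$.

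\emph{Strong convergence in $Y_{\hat\lambda}$.} I plan to use Lemma \ref{L2.1}, which requires $\|y_k-y\|\to0$ in $C[0,T;L^2(\Omega))\cap L^2(0,T;H^1(\Omega))$ for each finite $T$. Set $z_k=y_k-y$; it satisfies
\[
\partial_t z_k+Az_k+[f(y_k)-f(y)]=h_k-h,\qquad z_k(0)=0.
\]
Testing with $z_k$ and using $f'\ge\Lambda_f$ gives
\[
\tfrac12\|z_k(t)\|^2+\Lambda_A\|\nabla z_k\|^2_{L^2(Q_t)}\le |\Lambda_f|\|z_k\|^2_{L^2(Q_t)}+\int_{Q_t}(h_k-h)z_k.
\]
The first term on the right tends to $0$ by the compactness step. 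The last term is the main obstacle, since it is weak convergence paired with strong convergence. By the splitting $h_k-h\in L^2$ weakly and $z_k\to0$ strongly in $L^2(Q_T)$, $\int_{Q_t}(h_k-h)z_k\to0$ uniformly in $t\le T$, because $|\int_{Q_t}(h_k-h)z_k|\le \|h_k-h\|_{L^2(Q_T)}\|z_k\|_{L^2(Q_T)}$. Hence $z_k\to0$ in $C[0,T;L^2)\cap L^2(0,T;H^1)$. Lemma \ref{L2.1} applies with $X=L^2(\Omega)$, $r=\infty$ for the $C$ part and $X=H^1(\Omega)$, $r=2$ for the other, giving $y_k\to y$ in $Y_{\hat\lambda}$.

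\emph{$L^\infty$ statements.} If moreover $h_k\rightharpoonup h$ in $L^p_\lambda(0,\infty;L^2(\Omega))$, then \eqref{E2.7} bounds $y_k$ in $L^\infty_\lambda(Q)$. On each $Q_T$, $|y_k|\le M_T$, so $f$ is Lipschitz on the relevant range. Convergence in $L^\infty(Q_T)$ should follow by applying the truncation argument of Step V of Theorem \ref{T2.1} to $z_k$, whose equation is linear with bounded coefficient $\int_0^1f'(y+s z_k)\,ds\ge\Lambda_f$. With $p>4/(4-n)$, the $L^p(0,T;L^2)$-weak/compact structure of $h_k-h$ should give smallness. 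I expect this to be the delicate point: weak convergence of $h_k$ alone does not give $\|h_k-h\|\to0$. So I would instead write $z_k=z_k^1+z_k^2$, where $z_k^1$ solves the linear problem with datum $h_k-h$ and $z_k^2$ absorbs the nonlinearity. For $z_k^1$ I would use compactness of the linear solution map $L^p(0,T;L^2)\to C(\bar Q_T)$, via Hölder regularity for $p>4/(4-n)$. For $z_k^2$ I would use the $L^\infty$ estimate with right-hand side $c_k z_k^1$, which tends to $0$ uniformly. Lemma \ref{L2.1} with $r=\infty$, together with the bound in $L^\infty_\lambda(Q)$, then gives $\|y_k-y\|_{L^\infty_{\hat\lambda}(Q)}\to0$. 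Finally, $|f(y_k)-f(y)|\le C_T|y_k-y|$ on $Q_T$ gives local convergence. Boundedness of $f(y_k)$ in $L^2_\lambda(Q)$ and Lemma \ref{L2.1} then give $f(y_k)\to f(y)$ in $L^2_{\hat\lambda}(Q)$.
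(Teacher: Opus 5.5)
Your proof is correct. The compactness and limit-identification step is the same as in the paper; the two convergence steps use somewhat different means.

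For the convergence in $Y_{\hat\lambda}$, the paper first applies Lemma \ref{L2.1} to get $y_k\to y$ in $L^2_{\hat\lambda}(Q)$. It then does one global energy estimate with the test function $\e^{-\hat\lambda t}(y_k-y)$ and bounds the right-hand side by $\big(\|h_k-h\|_{L^2_{\hat\lambda}(Q)}+\|f(y_k)-f(y)\|_{L^2_{\hat\lambda}(Q)}\big)\|y_k-y\|_{L^2_{\hat\lambda}(Q)}$. You instead do an unweighted estimate on each $(0,T)$, using $f'\ge\Lambda_f$, and then invoke Lemma \ref{L2.1} twice. The two versions are interchangeable.

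The genuine difference is in the $L^\infty$ step. The paper sets $w_k=\e^{-\frac{\hat\lambda}{2}t}(y_k-y)$ and uses a $C^{0,\mu}(\bar Q_T)$ bound that is uniform with respect to the $k$-dependent coefficients $f'(y_{\theta_k})$. It then identifies the uniform limit through the compact embedding $C^{0,\mu}(\bar Q_T)\subset C(\bar Q_T)$ and the already known $L^2(Q_T)$ convergence. Your splitting $z_k=z_k^1+z_k^2$ needs H\"older regularity for only one fixed linear operator, so that its solution map $L^p(0,T;L^2(\Omega))\to C(\bar Q_T)$ is compact and turns $h_k-h\rightharpoonup0$ into uniform convergence. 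The $k$-dependence is then handled by an $L^\infty$ estimate whose constant depends only on the lower bound $\Lambda_f$ of the coefficient, of the type \eqref{E3.8}. This works because $c_k$ is uniformly bounded on $Q_T$ and $c_kz_k^1\to0$ uniformly.

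Your route makes the weak-to-strong mechanism explicit and asks less uniformity of the regularity theory. The paper's route is shorter, at the price of relying on H\"older bounds that hold uniformly over the whole family of coefficients.
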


\begin{proof}
The estimates \eqref{E2.4} and \eqref{E2.5} yield the existence of a constant $C_\lambda$ such that $\|y_k\|_{W_\lambda(0,\infty)} + \|f(y_k)\|_{L_\lambda^2(Q)} \le C_\lambda$ for all $k \ge 1$. Then, there exists a subsequence, denoted in the same way, such that $y_k \rightharpoonup \tilde y$ in $W_\lambda(0,\infty)$ and $f(y_k) \rightharpoonup \phi$ in $L^2_\lambda(Q)$. This implies that $y_k \rightharpoonup \tilde y$ in $W(0,T)$ for every $T < \infty$. Using the compactness of the embedding $W(0,T) \subset L^2(Q_T)$ we deduce the existence of a second subsequence, denoted in the same way, such that $y_k \to \tilde y$ strongly in $L^2(Q_T)$ and $y_k(x,t) \to \tilde y(x,t)$ a.e.~in $Q_T$. Therefore, we also have that $f(y_k(x,t)) \to f(\tilde y(x,t))$ almost everywhere in $Q_T$. Since $T$ can be selected arbitrarily large, we infer that $\phi = f(\tilde y)$ and hence $f(y_k) \rightharpoonup f(\tilde y)$ in $L^2_\lambda(Q)$. Using these convergence properties, it is  immediate to pass  to the weak limit in the equation satisfied by $y_k$ in $Q_T$ and to deduce that $\tilde y = y$ in $Q_T$ for every $T < \infty$. Consequently the whole sequences $\{y_k\}_{k = 1}^\infty$ and $\{f(y_k)\}_{k = 1}^\infty$ converge strongly to $y$ and weakly to $f(y)$ in $L^2(Q_T)$ for every $T < \infty$. Now, applying Lemma \ref{L2.1} with $X = L^2(\Omega)$ and $r = 2$ the convergence $y_k \to y$ in $L^2_{\hat\lambda}(Q)$ follows.

Subtracting the equations \eqref{E2.2} satisfied by $(y_k,h_k)$ and $(y,h)$ and testing the resulting equation with $\e^{-\hat\lambda t}(y_k - y)$, we obtain
\begin{align}
&\int_0^T\e^{-\hat\lambda t}\langle\frac{\partial (y_k - y)}{\partial t},y_k - y\rangle \dt + \int_0^T\e^{-\hat\lambda t}a(y_k-y,y_k-y)\dt\notag\\
&  = \int_{Q_T}\e^{-\hat\lambda t}[(h_k - h) -(f(y_k) - f(y))](y_k - y)\dx\dt\notag\\
& \le \Big(\|h_k - h\|_{L^2_{\hat\lambda}(Q)} + \|f(y_k) - f(y)\|_{L^2_{\hat\lambda}(Q)}\Big)\|y_k - y\|_{L^2_{\hat\lambda}(Q)}.
\label{E2.17}
\end{align}
Let us take a look at the left hand side of the above inequality:
\begin{align*}
&\int_0^T\e^{-\hat\lambda t}\langle\frac{\partial (y_k - y)}{\partial t},y_k - y\rangle \dt = \frac{1}{2}\int_0^T\e^{-\hat\lambda t}\frac{d}{dt}\|y_k(t) - y(t)\|^2_{L^2(\Omega)}\dt\\
&= \frac{1}{2}\e^{-\hat\lambda T}\|y_k(T) - y(T)\|^2_{L^2(\Omega)} + \frac{\hat\lambda}{2}\int_0^T\e^{-\hat\lambda t}\|y_k(t) - y(t)\|^2_{L^2(\Omega)}\dt,\\
&\Lambda_A\int_0^T\e^{-\hat\lambda t}\|\nabla(y_k(t) - y(t))\|^2_{L^2(\Omega)^n}\dt \le \int_0^T\e^{-\hat\lambda t}a(y_k-y,y_k-y)\dt.
\end{align*}
Using these relations in \eqref{E2.17} and taking into account that $T < \infty$ is arbitrarily large we infer
\begin{align*}
&\|y_k - y\|^2_{C_{\hat\lambda}[0,\infty;L^2(\Omega))} + \|y_k - y\|^2_{L^2_{\hat\lambda}(0,\infty;H^1(\Omega))}\\
& \le C\Big(\|h_k - h\|_{L^2_{\hat\lambda}(Q)} + \|f(y_k) - f(y)\|_{L^2_{\hat\lambda}(Q)}\Big)\|y_k - y\|_{L^2_{\hat\lambda}(Q)} \stackrel{k \to \infty}{\longrightarrow} 0.
\end{align*}

Now, we assume that $h_k \rightharpoonup h$ in $L^p_\lambda(0,\infty;L^2(\Omega))$. From \eqref{E2.7} we infer the existence of a constant $M_\lambda$ such that $\|y_k\|_{L^\infty_\lambda(Q)} \le M_\lambda$ for every $k \ge 1$ and $\|y\|_{L^\infty_\lambda(Q)} \le M_\lambda$. We prove that $\|y_k - y\|_{L^\infty_{\hat\lambda}(Q_T)} \to 0$ for every $T < \infty$. We introduce $w_k = \e^{-\frac{\hat\lambda}{2}t}(y_k - y)$, that satisfies the equation
\[
\left\{\begin{array}{l}\displaystyle\frac{\partial w_k}{\partial t} + \frac{\hat\lambda}{2}w_k + Aw_k + f'(y_{\theta_k})w_k = \e^{-\frac{\hat\lambda}{2}t}(h_k - h) \text{ in } Q_T,\\\partial_{n_A}w_k = 0 \text{ on } \Sigma_T,\ \ w_k(0) = 0 \text{ in }\Omega,\end{array}\right.
\]
where $y_{\theta_k} = y + \theta_k(y_k - y)$ for a measurable function $\theta_k:Q_T \longrightarrow [0,1]$. Since the right hand side of the equation is bounded in $L^p(0,T;L^2(\omega))$, then there exists a number $\mu \in (0,1)$ such that $\{w_k\}_{k = 1}^\infty$ is bounded in $C^{0,\mu}(\bar Q_T)$. Using the compactness of the embedding $C^{0,\mu}(\bar Q_T) \subset C(\bar Q_T)$ and the convergence $y_k \to y$ in $L^2(Q_T)$ imply $\lim_{k \to \infty}\|y_k - y\|_{C(\bar Q_T)} = 0$. Again, applying Lemma \ref{L2.1} with $X = L^\infty(\Omega)$ and $r = \infty$ we get $\lim_{k \to \infty}\|y_k - y\|_{L^\infty_{\hat\lambda}(Q)} = 0$.

Since $\lim_{k \to \infty}\|y_k - y\|_{L^\infty_{\hat\lambda}(Q)} = 0$, then $\lim_{k \to \infty}\|y_k - y\|_{L^\infty(Q_T)} = 0$ and $\lim_{k \to \infty}\|f(y_k) - f(y)\|_{L^\infty(Q_T)} = 0$ for all $T < \infty$. Thus, we have $\lim_{k \to \infty}\|f(y_k) - f(y)\|_{L^2_{\hat\lambda}(Q_T)} = 0$ for every $T < \infty$. Once again using Lemma \ref{L2.1} with $X = L^2(\Omega)$ and $r = 2$ we conclude the convergence $f(y_k) \to f(y)$ in $L^2_{\hat\lambda}(Q)$.
\end{proof}

\begin{remark}
Turning to equation \eqref{E1.1}, suppose that $\lambda > -2\Lambda_f$ and $g \in L^2_\lambda(Q)$. As a consequence of Lemma \ref{L2.2} we have the following property: if $u_k \stackrel{*}{\rightharpoonup} u$ in $L^\infty(0,\infty;L^2(\omega))$, then
    \begin{equation}
    \lim_{k \to \infty}\Big(\|y_{u_k} - y_u\|_{Y_\lambda} + \|y_{u_k} - y_u\|_{L^\infty_\lambda(Q)} + \|f(y_{u_k}) - f(y_u)\|_{L^2_\lambda(Q)}\Big) = 0.
    \label{E2.18}
    \end{equation}
    Indeed, it is enough to observe that $L^\infty(0,\infty;L^2(\omega))$ is continuously embedded in $L^p_\lambda(0,\infty;L^2(\omega))$ for every $\lambda > 0$.
    \label{R2.2}
\end{remark}

\section{Differentiability of the input-to-state mapping}
\label{S3}

The goal of this section is to analyze the differentiability of the relation control-to-state. To this end we make the following additional assumptions on the control problem:

{\em Assumption:} We assume the existence of $q \in [0,\infty)$ and constants $C_{f,1}, C_{f,2} \ge 0$ such that
\begin{align}
&|f^{(j)}(s)| \le C_{f,1}(|s|^q + 1)(C_{f,2}|f(s)| + 1)\quad \forall s \in \mathbb{R} \text{ and } j =1, 2,
\label{E3.1}\\
&\sigma_s > -2(q+3)\Lambda_f, \; -2\Lambda_f < \lambda_c < \frac{\sigma_s}{q + 3},  \;\bar\lambda := (q + 1)\lambda_c, \sigma_c > 0, \label{E3.2}\\
&g \in L^p_{\lambda_c}(0,\infty;L^2(\Omega)) \text{ and } y_d \in L^p_{\sigma_s}(0,\infty;L^2(\Omega)).
\label{E3.3}
\end{align}

Observe that any polynomial function of degree $m$ satisfies \eqref{E3.1} with $q = m - 1$ and $C_{f,2} = 0$ and also with $q = 0$ and $C_{f,2} = 1$, taking $C_{f,1} > 0$ large enough in both cases. For any polynomial function $\zeta$ of degree $m \ge 1$, the function $f(s) = \e^{\zeta(s)}$ also satisfies \eqref{E3.1} with $q = 2m-2$ and $C_{f,2} = 1$.

We also notice that $\lambda_c < \frac{\sigma_s}{q + 3}$ is equivalent to $\lambda_c < \frac{\sigma_s - \bar\lambda}{2}$.

\begin{lemma}
The following properties hold:

\noindent i) For every $u \in L^p_{\lambda_c}(0,\infty;L^2(\omega))$ the solution $y_u$ of \eqref{E1.1} satisfies $f'(y_u), f''(y_u) \in L^2_{\bar\lambda}(Q)$ and there exists a constant $C$ independent of $(y_0,g,u)$ such that
\begin{equation}
\|f^{(j)}(y_u)\|_{L^2_{\bar\lambda}(Q)} \le C\Big(\|y_0\|_{L^\infty(\Omega)} + \|g + u\chi_\omega\|_{L^p_{\lambda_c}(0,\infty;L^2(\Omega))} + 1\Big)^{q + 1},\ j=1, 2.
\label{E3.4}
\end{equation}

\noindent ii) For $u, v \in L^p_{\lambda_c}(0,\infty;L^2(\omega))$ and $M = \max\{\|u\|_{L^p_{\lambda_c}(0,\infty;L^2(\omega))}, \|v\|_{L^p_{\lambda_c}(0,\infty;L^2(\omega))}\}$, let $y = y_u + \theta(y_v - y_u)$, where $\theta:Q \longrightarrow [0,1]$ is a measurable function. Then, we have $f(y), f'(y), f''(y) \in L^2_{\bar\lambda}(Q)$ and there exists a constant $C_M$ such that
\begin{equation}
\max\Big\{\|f(y)\|_{L^2_{\bar\lambda}(Q)}, \|f'(y)\|_{L^2_{\bar\lambda}(Q)}, \|f''(y)\|_{L^2_{\bar\lambda}(Q)}\Big\} \le C_M.
\label{E3.5}
\end{equation}
\label{L3.1}
\end{lemma}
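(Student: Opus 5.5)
The plan is to combine the pointwise growth bound \eqref{E3.1} with the weighted estimates \eqref{E2.4} and \eqref{E2.7} of Theorem \ref{T2.1}, applied with $\lambda = \lambda_c$, which is admissible because $-2\Lambda_f < \lambda_c$ by \eqref{E3.2}. Set $h = g + u\chi_\omega \in L^p_{\lambda_c}(0,\infty;L^2(\Omega))$ and $R = \|y_0\|_{L^\infty(\Omega)} + \|h\|_{L^p_{\lambda_c}(0,\infty;L^2(\Omega))}$. Estimate \eqref{E2.7} gives the pointwise bound
\[
|y_u(x,t)| \le K R\,\e^{\frac{\lambda_c}{2}t}.
\]
We also need $h \in L^2_{\lambda_c}(Q)$ to use \eqref{E2.4}. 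If this fails at the exponent $\lambda_c$ itself, we apply Theorem \ref{T2.1} with a slightly larger exponent $\lambda_c' \in (\lambda_c, \sigma_s/(q+3))$ and rename it; by Hölder, $L^p_{\lambda_c} \subset L^2_{\lambda_c'}$ since $p \ge 2$. Either way, \eqref{E2.4} gives $\|f(y_u)\|_{L^2_{\lambda_c}(Q)} \le C R$.

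For i), insert the pointwise bound into \eqref{E3.1}:
\[
|f^{(j)}(y_u)| \le C_{f,1}\big((KR)^q\e^{\frac{q\lambda_c}{2}t} + 1\big)\big(C_{f,2}|f(y_u)| + 1\big).
\]
Square this and multiply by $\e^{-\bar\lambda t} = \e^{-(q+1)\lambda_c t}$. The factor $\e^{q\lambda_c t}$ coming from $|y_u|^{2q}$ is absorbed, leaving the weight $\e^{-\lambda_c t}$ on the term $|f(y_u)|^2$. That term is controlled by $\|f(y_u)\|^2_{L^2_{\lambda_c}(Q)} \le C R^2$. The remaining pieces are of the form $\int_Q \e^{-\bar\lambda t}\,\cdot\,\dx\dt$ or $\int_Q \e^{-\lambda_c t}\,\cdot\,\dx\dt$ with bounded integrands, and they are finite since $\lambda_c > 0$. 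Collecting powers yields $\|f^{(j)}(y_u)\|_{L^2_{\bar\lambda}(Q)} \le C(R^q+1)(R+1) \le C(R+1)^{q+1}$, which is \eqref{E3.4}.

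For ii), $y$ lies pointwise between $y_u$ and $y_v$, so $|y| \le \max\{|y_u|,|y_v|\} \le K R_M \e^{\frac{\lambda_c}{2}t}$ with $R_M$ depending only on $M$, $y_0$, and $g$. For $f'(y)$ and $f''(y)$ the argument of i) goes through once $f(y)$ is controlled, so the main obstacle is $f(y)$ itself, because $y$ is not a solution of the state equation and \eqref{E2.4} does not apply to it. I would bound it with the mean value theorem,
\[
f(y) = f(y_u) + f'(\xi)\theta(y_v - y_u),
\]
where $\xi$ again lies between $y_u$ and $y_v$. By \eqref{E3.1},
\[
|f'(\xi)| \le C(|\xi|^q+1)(C_{f,2}|f(\xi)|+1),
\]
which is circular. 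To break the circularity, use monotonicity on the region where $f$ is monotone: since $f' \ge \Lambda_f$, the function $s \mapsto f(s) - \Lambda_f s$ is nondecreasing. Hence $f(\xi) - \Lambda_f\xi$ lies between its values at $y_u$ and $y_v$, which gives
\[
|f(y)| \le |f(y_u)| + |f(y_v)| + 2|\Lambda_f|\big(|y_u| + |y_v|\big).
\]
This is the key step, and it needs no mean value argument at all. The right-hand side is bounded in $L^2_{\lambda_c}(Q) \subset L^2_{\bar\lambda}(Q)$ by \eqref{E2.4} and \eqref{E2.3}, with constant depending only on $M$. With this bound on $f(y)$, repeating the computation of i) with $y$ in place of $y_u$ gives the bounds for $f'(y)$ and $f''(y)$, and hence \eqref{E3.5}.
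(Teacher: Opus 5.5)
Your proposal is correct and follows essentially the same route as the paper. For i) you absorb the pointwise growth $|y_u|\le KR\,\e^{\frac{\lambda_c}{2}t}$ from \eqref{E2.7} into the weight $\e^{-\bar\lambda t}$ and use the $L^2_{\lambda_c}(Q)$ bound on $f(y_u)$ from \eqref{E2.4}; for ii) you control $f(y)$ through the monotonicity of $s\mapsto f(s)-\Lambda_f s$, exactly as the paper does. Your hedge about whether $h\in L^2_{\lambda_c}(Q)$ holds is unnecessary, and the proposed renaming $\lambda_c\to\lambda_c'$ would only give the weaker bound in $L^2_{(q+1)\lambda_c'}(Q)$. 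Since $\lambda_c>-2\Lambda_f\ge 0$, H\"older's inequality with respect to the finite measure $\e^{-\lambda_c t}\dt$ gives $L^p_{\lambda_c}(0,\infty;L^2(\Omega))\subset L^2_{\lambda_c}(Q)$ directly.
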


\begin{proof}
i) Since we have $\bar\lambda = (q+1){\lambda_c}$ we get
\[
\e^{-\bar\lambda t}|y_u(x,t)|^{2q}|f(y_u(x,t))|^2 \le \Big(\e^{-\frac{{\lambda_c}}{2}t}|y_u(x,t)|\Big)^{2q}\e^{-{\lambda_c} t}|f(y_u(x,t))|^2.
\]
Then, using \eqref{E2.7}, \eqref{E2.4}, and Remark \ref{R2.1} we get
\begin{align*}
&\||y_u|^qf(y_u)\|_{L^2_{\bar\lambda}(Q)} \le \|y_u\|^q_{L^\infty_{\lambda_c}(Q)}\|f(y_u)\|_{L^2_{\lambda_c}(Q)}\\
& \le K^q_{f,\lambda_c,\Omega}C_{f,\lambda_c,\Omega}\Big(\|y_0\|_{L^\infty(\Omega)} + \|g + u\chi_\omega\|_{L^{p}_{\lambda_c}(Q)}\Big)^{q + 1}.
\end{align*}
Using this estimate and \eqref{E3.1} we deduce \eqref{E3.4}.

ii) Let us define the function $\tilde f:\mathbb{R} \longrightarrow \mathbb{R}$ by $\tilde f(s) = f(s) - \Lambda_fs$. Then, $\tilde f'(s) \ge 0$ for all $s \in \mathbb{R}$ and consequently $\tilde f$ is monotone increasing. Therefore, we have $|\tilde f(y(x,t))| \le \max\{|\tilde f(y_u(x,t))|,|\tilde f(y_v(x,t))|\}$, which implies $|f(y)| \le |f(y_u)| + |f(y_v)|  + 3\Lambda_f(|y_u| + |y_v|)|$. From \eqref{E2.3} and \eqref{E2.4} we infer the existence of a constant $K_M$ such that $\|f(y)\|_{L^2_{\lambda_c}(Q)} \le K_M$. Combining this inequality with \eqref{E3.1}  and using \eqref{E2.7} we get \eqref{E3.5}.
\end{proof}

The goal of this section is to analyze the differentiability of the mapping control-to-state. As a first step  we analyze the linearized state equation.
\begin{lemma}
Assume that $\lambda > -2\Lambda_f$ and $b \in L^2_{\bar\lambda}(Q) \cap L^\infty(Q_T)$ for every $T < \infty$ satisfies $b(x,t) \ge \Lambda_f$ almost everywhere. Then, for every $h \in L^2_\lambda(Q)$ the equation
\begin{equation}
\left\{
\begin{array}{l}
\displaystyle\frac{\partial z}{\partial t} + Az + bz = h\ \mbox{ in } Q,\\[0.5ex] \partial_{n_A}z = 0 \ \mbox{ on } \Sigma,\   z(0) = 0 \ \mbox{ in } \Omega,
\end{array}
\right.
\label{E3.6}
\end{equation}
has a unique solution $z \in Y_\lambda$ in the sense of Definition \ref{D2.1}, and
\begin{equation}
\|z\|_{Y_\lambda} \le K_1\|h\|_{L^2_\lambda(Q)}
\label{E3.7}
\end{equation}
with $K_1 = \frac{2}{\min\{1,\frac{\lambda}{2},\Lambda_A\}}$.
In addition, if $h \in L^p_\lambda(0,\infty;L^2(\Omega))$, then $z \in C_\lambda(\bar Q)$ and
\begin{equation}
\|z\|_{C_\lambda(\bar Q)} \le K_2\|h\|_{L^p_\lambda(0,\infty;L^2(\Omega))},
\label{E3.8}
\end{equation}
where $K_2$ is independent of $b$ and $h$.
\label{L3.2}
\end{lemma}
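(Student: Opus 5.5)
The plan is to treat \eqref{E3.6} first on bounded cylinders $Q_T$, where $b \in L^\infty(Q_T)$ makes it a standard linear parabolic problem. I then pass to $(0,\infty)$ by uniqueness and obtain the weighted estimates by adapting Steps I and V of the proof of Theorem \ref{T2.1} to the linear equation. The only structural information I will use about $b$ is the lower bound $b \ge \Lambda_f$ together with $\lambda > -2\Lambda_f$. This guarantees that every constant is independent of $b$ and of $T$.

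\emph{Existence and uniqueness.} For each $T<\infty$, since $b\in L^\infty(Q_T)$ and $h\in L^2(Q_T)$, the classical Lions theory gives a unique $z_T\in W(0,T)$ solving \eqref{E3.6} in $Q_T$. By uniqueness, $z_{T'}|_{Q_T}=z_T$ for $T<T'$. Hence there is a single function $z$ on $Q$ with $z\in W(0,T)$ and $bz\in L^2(Q_T)$ for every $T$, so $z$ is a solution in the sense of Definition \ref{D2.1}. Uniqueness on $(0,\infty)$ follows from uniqueness on each $Q_T$.

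\emph{Estimate \eqref{E3.7}.} I test \eqref{E3.6} in $Q_T$ with $\e^{-\lambda t}z$ and integrate by parts in time exactly as in \eqref{E2.8}--\eqref{E2.9}, using $\int b z^2 \e^{-\lambda t}\ge \Lambda_f\int z^2\e^{-\lambda t}$ and \eqref{E1.9}. This gives
\[
\tfrac12\e^{-\lambda T}\|z(T)\|^2_{L^2(\Omega)}+\Big(\tfrac{\lambda}{2}+\Lambda_f\Big)\|z\|^2_{L^2_\lambda(Q_T)}+\Lambda_A\|\nabla z\|^2_{L^2_\lambda(Q_T)}\le \|h\|_{L^2_\lambda(Q)}\|z\|_{L^2_\lambda(Q_T)}.
\]
Young's inequality absorbs the right-hand side. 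Letting $T\to\infty$ bounds both $\sup_t \e^{-\frac{\lambda}{2}t}\|z(t)\|_{L^2(\Omega)}$ and $\|z\|_{L^2_\lambda(0,\infty;H^1(\Omega))}$ by a multiple of $\|h\|_{L^2_\lambda(Q)}$. Continuity in time with values in $L^2(\Omega)$ comes from $z\in W(0,T)$, so $z\in Y_\lambda$. The constant depends only on $\min\{1,\frac{\lambda}{2}+\Lambda_f,\Lambda_A\}$; I would reconcile it with the stated $K_1$ via the shift conventions of Section \ref{S1}.

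\emph{Estimate \eqref{E3.8}, the main step.} I set $w=\e^{-\frac{\lambda}{2}t}z$, which solves
\[
w_t+Aw+\big(b+\tfrac{\lambda}{2}\big)w=\e^{-\frac{\lambda}{2}t}h
\]
with $w(0)=0$ and zero-order coefficient $b+\frac{\lambda}{2}\ge \Lambda_f+\frac{\lambda}{2}>0$. The forcing term satisfies $\|\e^{-\frac{\lambda}{2}t}h\|_{L^p(0,\infty;L^2(\Omega))}=\|h\|_{L^p_\lambda(0,\infty;L^2(\Omega))}$. I then repeat the Stampacchia truncation of Step V of Theorem \ref{T2.1}, i.e.\ the argument of \cite[Theorem A.2]{Casas-Kunisch2023C}, testing with $w_\rho=w-\proj_{[-\rho,\rho]}(w)$. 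The key point, and where I expect the main care is needed, is that the zero-order term contributes $\int(b+\frac{\lambda}{2})w\,w_\rho\ge(\frac{\lambda}{2}+\Lambda_f)\int w_\rho^2\ge0$, so $b$ disappears from the level-set inequality. This yields $\|w\|_{L^\infty(Q)}\le K_2\|h\|_{L^p_\lambda(0,\infty;L^2(\Omega))}$ with $K_2$ independent of $b$ and $T$.

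Finally, for continuity: on each $Q_T$ the coefficient $b$ is bounded and the right-hand side lies in $L^p(0,T;L^2(\Omega))$ with $p>\frac{4}{4-n}$. De Giorgi--Nash--Moser type results, the same ones invoked in the proof of Lemma \ref{L2.2}, then give $w\in C^{0,\mu}(\bar Q_T)$. Hence $z\in C(\bar Q_T)$ for all $T$, and combined with the uniform bound this gives $z\in C_\lambda(\bar Q)$ together with \eqref{E3.8}.
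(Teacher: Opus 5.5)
Your proposal follows the paper's proof essentially step by step: solve on each $Q_T$, test with $\e^{-\lambda t}z$ to get \eqref{E3.7}, and run the truncation argument of Step V of Theorem \ref{T2.1} on $\e^{-\frac{\lambda}{2}t}z$. There, $b\ge\Lambda_f$ and $\lambda>-2\Lambda_f$ make the zero-order term nonnegative, so $K_2$ is independent of $b$, and continuity is inherited from each $\bar Q_T$.

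The constant you obtain, governed by $\min\{1,\frac{\lambda}{2}+\Lambda_f,\Lambda_A\}$, is exactly what the paper's own Step I yields. The $\frac{\lambda}{2}$ in the stated $K_1$ is a slip that no shift convention repairs, so drop that remark. To see this, take $a_0=0$, $b\equiv\Lambda_f<0$ and spatially constant $h$; the problem reduces to $w'+(\frac{\lambda}{2}+\Lambda_f)w=\e^{-\frac{\lambda}{2}t}h$, whose solution operator on $L^2(0,\infty)$ has norm $(\frac{\lambda}{2}+\Lambda_f)^{-1}$.
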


\begin{proof}
We split the proof into two steps.

{\em Step I. Proof of \eqref{E3.7}.} For every $T < \infty$ we consider the equation
\begin{equation}
\left\{
\begin{array}{l}
\displaystyle
\frac{\partial z}{\partial t} + Az + bz = h\ \mbox{ in } Q_T,\\[0.5ex] \partial_{n_A}z = 0 \ \mbox{ on } \Sigma_T,\   z(0) = 0 \ \mbox{ in } \Omega.
\end{array}
\right.
\label{E3.9}
\end{equation}
This a standard linear parabolic equation having a unique solution $z \in W(0,T) \subset C([0,T];L^2(\Omega))$. Testing \eqref{E3.9} with $\e^{-\lambda t}z$ we get similarly as in \eqref{E2.9} the inequality
\begin{align*}
&\frac{1}{2}\e^{-\lambda T}\|z(T)\|^2_{L^2(\Omega)} + (\frac{\lambda}{2} + \Lambda_f)\!\int_0^T\e^{-\lambda t}\|z(t)\|^2_{L^2(\Omega)}\dt + \Lambda_A\!\int_0^T\e^{-\lambda t}\|\nabla z(t)\|^2_{L^2(\Omega)^n}\dt\\
&\le \|h\|_{L^2_\lambda(Q)}\|z\|_{L^2_\lambda(Q)} \le \frac{1}{2\min\{\frac{\lambda}{2} + \Lambda_f,\Lambda_A\}}\|h\|^2_{L^2_\lambda(Q)} + \frac{\min\{\frac{\lambda}{2} + \Lambda_f,\Lambda_A\}}{2}\|z\|^2_{L^2_\lambda(Q)}.
\end{align*}
Here we used that $b(x,t) \ge \Lambda_f$.   The above inequality leads to
\[
\|z\|_{C_\lambda([0,T];L^2(\Omega))} + \sqrt{\min\{\frac{\lambda}{2} + \Lambda_f,\Lambda_A\}}\|z\|_{L^2_\lambda(0,T;H^1(\Omega))}  \le \frac{2}{\sqrt{\min\{\frac{\lambda}{2} + \Lambda_f,\Lambda_A\}}}\|h\|_{L^2_\lambda(Q)}.
\]
Since $T$ was taken in $(0,\infty)$ arbitrarily we infer \eqref{E3.7}. \vspace{2mm}

{\em Step II. Proof of \eqref{E3.8}.} Now, we assume that $h \in L^p_\lambda(0,\infty;L^2(\Omega))$. Again the proof follows the steps of \cite[Theorem A.2]{Casas-Kunisch2023C}. We indicate how to integrate the term $\e^{-\lambda t}$ in the proof for the case $n = 2$ or 3, proceeding analogously for $n = 1$. We take $\rho > 0$ arbitrary and set $z_\rho(x,t) = \e^{-\frac{\lambda}{2}t}z(x,t) - \proj_{[-\rho,+\rho]}(\e^{-\frac{\lambda}{2}t}z(x,t))$ and $A_\rho(t) = \{x \in \Omega : \e^{\frac{\lambda}{2}t}|z(x,t)| > \rho\}$. Following the notation in \cite[Theorem A.2]{Casas-Kunisch2023C}, we take select $\alpha \in \big(\frac{pn}{2p-4},\frac{n}{n - 2}\big)$. We have that $\e^{-\frac{\lambda}{2}t}a(z,z_\rho) \ge a(z_\rho,z_\rho)$ and
\begin{align*}
&\langle\e^{-\frac{\lambda}{2}t}\frac{\partial z}{\partial t},z_\rho\rangle + \Lambda_f\int_\Omega zz_\rho\dx = \langle\frac{\partial(\e^{-\frac{\lambda}{2}t}z)}{\partial t},z_\rho\rangle + (\frac{\lambda}{2} + \Lambda_f)\e^{-\frac{\lambda}{2}t}\int_\Omega zz_\rho\dx\\
& \ge \langle\frac{\partial z_\rho}{\partial t},z_\rho\rangle + (\frac{\lambda}{2} + \Lambda_f)\int_\Omega z^2_\rho\dx = \frac{1}{2}\frac{d}{dt}\|z_\rho\|^2_{L^2(\Omega)} + (\frac{\lambda}{2} + \Lambda_f)\|z_\rho\|^2_{L^2(\Omega)},
\end{align*}
where $\langle\cdot,\cdot\rangle$ stands for the duality between $H^1(\Omega)^*$ and $H^1(\Omega)$. Testing the equation \eqref{E3.9} with $\e^{-\frac{\lambda}{2}t}z_\rho$, using the above identities, \eqref{E1.9}, the fact that $\frac{p}{2} \ge 1$, and integrating in $[0,t]$ for any $t \in (0,T]$ we get that
\begin{align*}
&\frac{1}{2}\|z_\rho(t)\|^2_{L^2(\Omega)} + (\frac{\lambda}{2} + \Lambda_f)\int_{Q_t}z^2_\rho\dx\ds + \Lambda_A\int_{Q_t}|\nabla z_\rho|^2\dx\ds\\
& \le \int_{Q_t}\e^{-\frac{\lambda}{2}s}hz_\rho\dx\ds \le \|h\|_{L^p_\lambda(0,\infty;L^2(\Omega))}\Big(\int_0^t\|z_\rho(s)\|^{p'}_{L^2(A_\rho(s))}\ds\Big)^{\frac{1}{p'}}\\
& \le \|h\|_{L^p_\lambda(0,\infty;L^2(\Omega))}\Big(\int_0^t\|z_\rho(s)\|^{p'}_{L^{2\alpha}(A_\rho(s))}|A_\rho(s)|^{\frac{p'}{2\alpha'}}\ds\Big)^{\frac{1}{p'}}
\end{align*}

\begin{align*}
&\le C_1\|h\|_{L^p_\lambda(0,\infty;L^2(\Omega))}\Big(\int_0^t\|z_\rho(s)\|^2_{H^1(\Omega)}\ds\Big)^{\frac{1}{2}}\Big(\int_0^t|A_\rho(s)|^{\frac{p'}{\alpha'(2 - p')}}\ds\Big)^{\frac{2 - p'}{2p'}}\\
&\le \frac{\min\{\frac{\lambda}{2}+\Lambda_f,\Lambda_A\}}{2}\!\int_0^t\|z_\rho(s)\|^2_{H^1(\Omega)}\ds\\
&+ \frac{C_1^2\|h\|^2_{L^p_\lambda(0,\infty;L^2(\Omega))}}{\min\{\lambda+2\Lambda_f,2\Lambda_A\}}\Big(\!\int_0^t|A_\rho(s)|^{\frac{p'}{\alpha'(2 - p')}}\ds\!\Big)^{\frac{2 - p'}{p'}}.
\end{align*}
This leads to
\[
\Big(\|z_\rho(t)\|^2_{L^2(\Omega)} + \|z_\rho\|^2_{L^2(0,t;H^1(\Omega))}\Big)^{\frac{1}{2}} \le C \|h\|_{L^p_\lambda(0,\infty;L^2(\Omega))}\Big(\int_0^t|A_\rho(s)|^{\frac{p}{\alpha'(p - 2)}}\ds\Big)^{\frac{p - 2}{2p}}
\]
for some constant $C$ independent of $t$ and $\rho$. This establishes the inequality (A.8) of \cite[Theorem A.2]{Casas-Kunisch2023C} and the rest of the proof is the same to infer the $L^\infty_\lambda(Q)$ estimate for $z$. The continuity in $\bar Q$ now follows from the continuity in $\bar Q_T$ for every $T < \infty$.
\end{proof}

\begin{lemma}
Assume that $\lambda > -2\Lambda_f$. Then, for every $u,v \in L^p_\lambda(0,\infty;L^2(\omega))$ the equation
\begin{equation}
\left\{
\begin{array}{l}
\displaystyle\frac{\partial z}{\partial t} + Az + f'(y_u)z = v\chi_\omega\ \mbox{ in } Q,\\[0.5ex] \partial_{n_A}z = 0 \ \mbox{ on } \Sigma,\   z(0) = 0 \ \mbox{ in } \Omega,
\end{array}
\right.
\label{E3.10}
\end{equation}
has a unique solution $z_v \in Y_\lambda \cap C_\lambda(\bar Q)$ and
\begin{equation}
\|z_v\|_{L^2_\lambda(0,\infty;H^1(\Omega))} + \|z_v\|_{C_\lambda(\bar Q)} \le C_{1,M}\|v\|_{L_\lambda^p(0,\infty;L^2(\omega))},
\label{E3.11}
\end{equation}
where $M = \|u\|_{L^p_\lambda(0,\infty;L^2(\omega))}$. Moreover, we have that
\begin{equation}
\left\{\begin{array}{l}\displaystyle\|y_u - y_v\|_{L^2_\lambda(0,\infty;H^1(\Omega))} + \|y_u - y_v\|_{C_\lambda(\bar Q)} \le C_{2,M}\|u - v\|_{L_\lambda^p(0,\infty;L^2(\omega))},\\
\displaystyle\|y_u - y_v\|_{Y_\lambda} \le C_{3,M}\|u - v\|_{L_\lambda^2(Q)},
\end{array}\right.
\label{E3.12}
\end{equation}
where $M = \max\{\|u\|_{L^p_\lambda(0,\infty;L^2(\omega))}, \|v\|_{L^p_\lambda(0,\infty;L^2(\omega))}\}$.
\label{L3.3}
\end{lemma}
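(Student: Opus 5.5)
The plan is to reduce \eqref{E3.10} to Lemma \ref{L3.2} with $b = f'(y_u)$, and to obtain \eqref{E3.12} the same way through the mean value theorem. We work with $g \in L^p_{\lambda_c}(0,\infty;L^2(\Omega))$ as in \eqref{E3.3}, with $\lambda$ compatible with the weights used there.

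\textbf{Step 1: check the hypotheses of Lemma \ref{L3.2}.}
\begin{itemize}
\item By \eqref{E1.4}, $b(x,t) = f'(y_u(x,t)) \ge \Lambda_f$ almost everywhere.
\item By \eqref{E2.7} and Remark \ref{R2.1}, $y_u \in L^\infty_\lambda(Q)$. Hence $y_u \in L^\infty(Q_T)$ for every $T<\infty$, and the continuity of $f'$ gives $b \in L^\infty(Q_T)$.
\item Lemma \ref{L3.1}(i) gives $b \in L^2_{\bar\lambda}(Q)$, with a bound depending only on $M$, $y_0$ and $g$.
\end{itemize}
These are exactly the hypotheses of Lemma \ref{L3.2}.

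\textbf{Step 2: existence, uniqueness and \eqref{E3.11}.} Take $h = v\chi_\omega$. Since $p \ge 2$, $L^p_\lambda(0,\infty;L^2(\omega)) \subset L^2_\lambda(Q)$ continuously; for $p>2$ this follows from Hölder's inequality with the weight $\e^{-\lambda t}$ and $\lambda>0$. Lemma \ref{L3.2} then gives a unique $z_v \in Y_\lambda \cap C_\lambda(\bar Q)$. Estimates \eqref{E3.7} and \eqref{E3.8} combined give \eqref{E3.11}. The constants $K_1$ and $K_2$ do not depend on $b$, so $C_{1,M}$ is in fact uniform. We keep the dependence on $M$ in the notation only for consistency.

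\textbf{Step 3: Lipschitz estimates \eqref{E3.12}.} Set $w = y_u - y_v$. Subtracting the two state equations and applying the mean value theorem pointwise, $w$ solves \eqref{E3.6} with
\[
b = f'(y_\theta), \qquad y_\theta = y_v + \theta(y_u - y_v),
\]
for a measurable $\theta: Q \to [0,1]$, and right-hand side $h = (u-v)\chi_\omega$.

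We check the hypotheses of Lemma \ref{L3.2} for this $b$.
\begin{itemize}
\item $b \ge \Lambda_f$ again by \eqref{E1.4}.
\item Boundedness on each $Q_T$ follows from the $L^\infty_\lambda$ bounds \eqref{E2.7} on $y_u$ and $y_v$.
\item $b \in L^2_{\bar\lambda}(Q)$ follows from Lemma \ref{L3.1}(ii), with a bound $C_M$.
\end{itemize}
Then \eqref{E3.8} gives the $C_\lambda(\bar Q)$ part of the first line of \eqref{E3.12}, and \eqref{E3.7} gives both the $L^2_\lambda(0,\infty;H^1(\Omega))$ part and the second line, with $\|u-v\|_{L^2_\lambda(Q)}$ on the right.

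\textbf{Main obstacle.} The delicate point is justifying that $w$ is a solution in the sense of Definition \ref{D2.1}, and checking the measurability of $\theta$ and the membership $b \in L^2_{\bar\lambda}(Q)$. Lemma \ref{L3.1} is formulated with weight $\lambda_c$, so one must read the present $\lambda$ as $\lambda_c$, or else note that for $\lambda \ge \lambda_c$ the same argument runs with $(q+1)\lambda$ in place of $\bar\lambda$.

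In fact, the proof of Lemma \ref{L3.2} seems to use $b \in L^2_{\bar\lambda}(Q)$ only to make sense of the product $bz$, while the energy estimates use only $b \ge \Lambda_f$. To make this rigorous I would do the following:
\begin{itemize}
\item Define $b$ by the difference quotient $b = (f(y_u)-f(y_v))/(y_u-y_v)$ where $y_u \ne y_v$, and $b = f'(y_u)$ otherwise. This is measurable and avoids choosing $\theta$.
\item Argue on each finite interval $(0,T)$, where all coefficients are bounded.
\item Pass to $T \to \infty$ using that the constants are independent of $T$.
\end{itemize}
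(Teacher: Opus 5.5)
Your proposal is correct and follows the paper's proof. The first part is Lemma~\ref{L3.2} applied with $b = f'(y_u)$, justified by \eqref{E1.4}, \eqref{E2.7} and Lemma~\ref{L3.1}(i); \eqref{E3.12} then follows by subtracting the state equations, applying the mean value theorem, and invoking Lemma~\ref{L3.2} with $b = f'(y_u + \theta(y_v - y_u))$, whose $L^2_{\bar\lambda}(Q)$ membership comes from Lemma~\ref{L3.1}(ii). Your extra remarks go beyond what the paper writes. These are that the $\lambda$ here must be reconciled with the $\lambda_c$ in Lemma~\ref{L3.1}, and that the estimates of Lemma~\ref{L3.2} really use only $b \ge \Lambda_f$ and boundedness on each $Q_T$. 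Your difference-quotient definition of $b$ is also a clean way to avoid the measurability question for $\theta$.
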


\begin{proof}
The first part of the lemma is a straightforward consequence of Lemma \ref{L3.2}, \eqref{E2.7}, \eqref{E1.4}, and the fact $b = f'(y_u) \in L^2_{\bar\lambda}(Q)$; see Lemma \ref{L3.1}. We only need to prove \eqref{E3.12}. To this end we subtract the equations satisfied by $y_u$ and $y_v$ and obtain for some measurable function $\theta:Q \longrightarrow [0,1]$
\[
\left\{
\begin{array}{l}
\displaystyle\frac{\partial(y_u - y_v)}{\partial t} + A(y_u - y_v) + f'(y_u+\theta(y_v - y_u))(y_u - y_v) = (u - v)\chi_\omega\ \mbox{ in } Q,\\[0.5ex] \partial_{n_A}(y_u - y_v) = 0 \ \mbox{ on } \Sigma,\   (y_u - y_v)(0) = 0 \ \mbox{ in } \Omega.
\end{array}
\right.
\]
From Lemma \ref{L3.1} we get that $f'(y_u+\theta(y_v - y_u)) \in L^2_{\bar\lambda}(Q)$. Then, \eqref{E3.12} follows from Lemma \ref{L3.2} with $b = f'(y_u+\theta(y_v - y_u))$.
\end{proof}

\begin{theorem}
The mapping $G:L^p_{\lambda_c}(0,\infty;L^2(\omega)) \longrightarrow Y_{\sigma_s}$, $G(u) = y_u$, is of class $C^2$. Given $u, v, v_1, v_2 \in L^p_{\lambda_c}(0,\infty;L^2(\omega))$, then $z_v = G'(u)v$ and $z_{v_1v_2} = G''(u)(v_1,v_2)$ are the solutions to the equations
\begin{align}
&\left\{
\begin{array}{l}
\displaystyle\frac{\partial z}{\partial t} + Az + f'(y_u)z = v\chi_\omega\ \mbox{ in } Q,\\[0.5ex] \partial_{n_A}z = 0 \ \mbox{ on } \Sigma,\   z(0) = 0 \ \mbox{ in } \Omega,
\end{array}
\right.
\label{E3.13}\\
&\left\{
\begin{array}{l}
\displaystyle\frac{\partial z}{\partial t} + Az + f'(y_u)z = -f''(y_u)z_{v_1}z_{v_2}\ \mbox{ in } Q,\\[0.5ex] \partial_{n_A}z = 0 \ \mbox{ on } \Sigma,\   z(0) = 0 \ \mbox{ in } \Omega,
\end{array}
\right.
\label{E3.14}
\end{align}
where $z_{v_i} = G'(u)v_i$, $i = 1, 2$.
\label{T3.1}
\end{theorem}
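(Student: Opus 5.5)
The natural route is the implicit function theorem. I would set it up in weighted spaces and then transport the resulting regularity from the weight $\lambda_c$ to $\sigma_s$ by using the gap $\sigma_s > (q+3)\lambda_c$ recorded in \eqref{E3.2}.

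\emph{Setting.} Let $V_{\lambda_c}$ be the space of $y \in Y_{\lambda_c}\cap C_{\lambda_c}(\bar Q)$ with $\partial_t y + Ay \in L^p_{\lambda_c}(0,\infty;L^2(\Omega))+\{\text{duals}\}$. Concretely, $V_{\lambda_c}$ is the image of $L^p_{\lambda_c}(0,\infty;L^2(\Omega))$ under the linear solution map of $\partial_t z + Az + \Lambda z = h$, $z(0)=0$, where $\Lambda$ is a large constant. Shifting by a fixed function to accommodate $y_0$, I would define
\[
\mathcal F(y,u) = \partial_t y + Ay + f(y) - g - u\chi_\omega .
\]
The delicate point is the target space. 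Lemma \ref{L3.1} only gives $f'(y)\in L^2_{\bar\lambda}(Q)$, so $f'(y)z$ lies in $L^2$ only with a weight like $\bar\lambda+\lambda_c=(q+2)\lambda_c$. I would therefore take the target to be $L^2_{\mu}(Q)$ with $(q+2)\lambda_c\le\mu<\sigma_s$; the condition $\lambda_c<\sigma_s/(q+3)$ leaves room for this, even allowing $\mu=(q+3)\lambda_c$. The domain for $y$ is then the set of $y\in Y_\mu$ with $y\in L^\infty_{\lambda_c}$-type bounds, on which the estimates \eqref{E3.4}--\eqref{E3.5} are available.

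\emph{$C^2$ of the Nemytskii part.} The key estimate is
\[
\|f(y+z)-f(y)-f'(y)z-\tfrac12 f''(y)z^2\|_{L^2_\mu}=o(\|z\|^2),
\]
together with continuity of $y\mapsto f''(y)$. I would prove this pointwise via Taylor's formula with an intermediate point $y+\theta z$, using \eqref{E3.5} to control $f''(y+\theta z)$ in $L^2_{\bar\lambda}$ and the $C_{\lambda_c}(\bar Q)$ norm of $z$ to supply the factor $\e^{-\lambda_c t}$ per $z$. For the smallness, I would split $(0,T)\cup(T,\infty)$. On $(0,T)$ everything is bounded and $f\in C^2$ gives uniform continuity. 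On $(T,\infty)$ the spare weight $\e^{-(\sigma_s-\mu)t}$ makes the contribution small uniformly, exactly as in Lemma \ref{L2.1}. This step is the main obstacle: the exponents have to match so that $f''(y_\theta)z_1z_2 \in L^2$ with weight $\le(q+3)\lambda_c<\sigma_s$, and continuity of the second derivative requires the tail argument because $f''(y_u)$ is not uniformly bounded in time.

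\emph{Invertibility and conclusion.} $\partial_y\mathcal F(y_u,u)z=\partial_t z+Az+f'(y_u)z$. Lemma \ref{L3.2} with $b=f'(y_u)\ge\Lambda_f$ gives unique solvability and the bound \eqref{E3.7}, so this is an isomorphism onto the target, with the $C_\lambda(\bar Q)$ part supplied by \eqref{E3.8} for the $L^p$ data relevant to $v\chi_\omega$. The implicit function theorem then yields $G\in C^2$ into the domain space, which embeds continuously into $Y_{\sigma_s}$ because $\lambda\mapsto Y_\lambda$ is increasing. Differentiating $\mathcal F(G(u),u)=0$ once gives \eqref{E3.13}; differentiating twice gives \eqref{E3.14}, whose right-hand side $f''(y_u)z_{v_1}z_{v_2}$ lies in $L^2_{(q+3)\lambda_c}\subset L^2_{\sigma_s}$ by Lemma \ref{L3.1} and \eqref{E3.11}. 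Lemma \ref{L3.2} then solves \eqref{E3.14} in $Y_{\sigma_s}$, consistent with the conditions in \eqref{E3.2}. The Lipschitz estimates \eqref{E3.12} provide the a priori continuity of $G$ used in this argument.
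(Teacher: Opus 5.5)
There is a genuine gap: the implicit function theorem cannot be closed here. No pair of Banach spaces makes $\mathcal F$ of class $C^2$ on a neighbourhood of $(y_u,u)$ and, at the same time, $\partial_y\mathcal F(y_u,u)$ an isomorphism onto the target.

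First, the bounds \eqref{E3.4}--\eqref{E3.5} you invoke on ``the domain for $y$'' hold only at states $y_u$ and at convex combinations $y_u+\theta(y_v-y_u)$ of two states. The proof of Lemma~\ref{L3.1} rests on $f(y_u)\in L^2_{\lambda_c}(Q)$, which comes from the energy estimate \eqref{E2.4} for solutions of the state equation, together with the monotonicity of $s\mapsto f(s)-\Lambda_f s$. These bounds say nothing about an open set of functions. For instance, $f(s)=\e^{s}-1$ satisfies \eqref{E1.4} and \eqref{E3.1} with $q=0$, $C_{f,2}=1$. With $y_0=0$, $g=0$, $u=0$ we get $y_u=0$. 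For $\kappa>0$ small, $z=\epsilon(\e^{\kappa t}-1)$ is as small as you like in $C_{\lambda_c}(\bar Q)$, in $Y_\mu$, and in the graph norms you describe. Yet $f(y_u+z)$ grows like $\exp(\epsilon\e^{\kappa t})$ and lies in no $L^2_\mu(Q)$. So $\mathcal F$ is not even defined on a neighbourhood of $y_u$.

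Second, consider polynomial $f$ with $C_{f,2}=0$, where $y\mapsto f(y)$ is smooth from $L^\infty_{\lambda_c}(Q)$ into $L^2_\mu(Q)$ for $\mu$ large. Even there, the linearization is not an isomorphism, because of a loss of weight. All that is known is $f'(y_u)\in L^2_{\bar\lambda}(Q)$. To put $f'(y_u)z$ into $L^2_\mu(Q)$, the space $V$ must therefore control $z$ in some $L^\infty_\lambda(Q)$ with $\lambda+\bar\lambda\le\mu$, so $\lambda<\mu$. The inverse supplied by Lemma~\ref{L3.2} does not return this control:
\begin{itemize}
\item for data in $L^2_\mu(Q)$ it gives only $z\in Y_\mu$, and for $n=2,3$ such solutions need not be bounded even on $Q_T$;
\item for data in $L^p_\mu(0,\infty;L^2(\Omega))$ it gives only $z\in C_\mu(\bar Q)$, whose weight exceeds $\lambda$.
\end{itemize}
So \eqref{E3.7} does not control the $V$-norm, and $\partial_y\mathcal F(y_u,u)$ is not onto. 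Appealing to \eqref{E3.8} ``for the $L^p$ data relevant to $v\chi_\omega$'' does not help, because the implicit function theorem needs bijectivity on the whole target, which must contain $f(y)$ and $f'(y_u)z$. Your first choice, $V_{\lambda_c}$ with $L^p_{\lambda_c}$ data, fails in the same way, since $f'(y_u)z$ only lies in $L^2_{(q+2)\lambda_c}(Q)$.

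This is why the paper verifies Fr\'echet differentiability directly, linearizing only at genuine states:
\begin{itemize}
\item $y_{u+v}-y_u$, $z_v$ and $\eta_{v_1v_2}=G'(u+v_1)v_2$ come from $L^p_{\lambda_c}$ data, so they lie in $C_{\lambda_c}(\bar Q)$ by Lemma~\ref{L3.3};
\item $f''$ is evaluated only at convex combinations of states, where \eqref{E3.5} applies;
\item each remainder, e.g.\ $w=y_{u+v}-y_u-z_v$ with right-hand side $-\frac12 f''(y_\theta)(y_{u+v}-y_u)^2\in L^2_{\bar\lambda+2\lambda_c}(Q)\subset L^2_{\sigma_s}(Q)$, is solved once by Lemma~\ref{L3.2} in $Y_{\sigma_s}$, so the weight degrades only finitely often;
\item the $o(\|v_1\|)$ terms and the continuity of $G''$ come from the tail argument of Lemma~\ref{L2.1}.
\end{itemize}
Your weight bookkeeping and Taylor-plus-tail estimates are the right ingredients. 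They must be applied to the remainders $y_{u+v}-y_u-z_v$ and $\eta_{v_1v_2}-z_{v_2}-z_{v_1v_2}$ rather than fed into an implicit function theorem.
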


\begin{proof}
We split the proof into three steps.

{\em Step I: $G$ is Fr\'echet differentiable.} Given $u \in L^p_{\lambda_c}(0,\infty;L^2(\omega))$, for every $v$ in the same space we denote by $z_v$ the solution of \eqref{E3.13}. From Lemma \ref{L3.3} we know that such a solution exists, it is unique, and $z_v \in Y_{\lambda_c} \cap C_{\lambda_c}(\bar Q) \subset Y_{\sigma_s} \cap C_{\sigma_s}(\bar Q)$. We are going to prove that
\begin{equation}
\lim_{\|v\| \to 0}\frac{\|G(u+v) - G(u) - z_v\|_{Y_{\sigma_s}}}{\|v\|} = 0,
\label{E3.15}
\end{equation}
where $\|\cdot\|$ stands for the norm in $L^p_{\lambda_c}(0,\infty;L^2(\omega))$. Without loss of generality we can assume that $\|v\| \le 1$ and we set $M = \|u\| + 1$. Let us denote $w = G(u+v) - G(u) - z_v = y_{u+v} - y_u - z_v$. Subtracting the associated equations and making the Taylor expansion
\[
f(y_{u+v}) = f(y_u) + f'(y_u)(y_{u+v} - y_u) + \frac{1}{2}f''(y_\theta)(y_{u+v} - y_u)^2
\]
with $y_\theta = y_u + \theta(y_{u+v} - y_u)$ and $\theta:Q \longrightarrow [0,1]$ measurable, we obtain
\[
\left\{
\begin{array}{l}
\displaystyle\frac{\partial w}{\partial t} + Aw + f'(y_u)w = -\frac{1}{2}f''(y_\theta)(y_{u+v} - y_u)^2\ \mbox{ in } Q,\\[0.5ex] \partial_{n_A}w = 0 \ \mbox{ on } \Sigma,\   w(0) = 0 \ \mbox{ in } \Omega.
\end{array}
\right.
\]
Let us prove that $f''(y_\theta)(y_{u+v} - y_u)^2 \in L^2_{\sigma_s}(Q)$. Using \eqref{E3.5} and \eqref{E3.12}, and the fact that $\sigma_s > \bar\lambda + \lambda_c$ we get
\[
\|f''(y_\theta)(y_{u+v} - y_u)^2\|_{L^2_{\sigma_s}(Q)} \le \|f''(y_\theta)\|_{L^2_{\bar\lambda}(Q)}\|y_{u+v} - y_u\|^2_{L^\infty_{\lambda_c}(Q)} \le C_MC_{2,M}\|v\|^2.
\]
Combining this inequality with \eqref{E3.7} for $\sigma_s$ we get $\|w\|_{Y_{\sigma_s}} \le K_1C_MC_{2,M}\|v\|^2$, which implies \eqref{E3.15}. Finally we observe that the continuity of the linear mapping $G'(u)$ is a straightforward consequence of \eqref{E3.11}. \vspace{2mm}

{\em Step II: $G$ is twice Fr\'echet differentiable.} Given $u, v_1, v_2 \in L^p_{\lambda_c}(0,\infty;L^2(\omega))$ we denote $y_{u + v_1} = G(u + v_1)$, $y_u = G(u)$, $z_{v_i} = G'(u)v_i$, $i = 1, 2$, and $\eta_{v_1v_2} = G'(u + v_1)v_2$. Let us prove the existence of a unique solution $z_{v_1v_2}$ of \eqref{E3.14} and the continuity of the bilinear mapping $G''(u)$. For this we estimate the right hand side of the equation:

\begin{align*}
&\Big(\int_Q\e^{-\sigma_s t}|f''(y_u)|^2|z_{v_1}|^2|z_{v_2}|^2\dx\dt\Big)^{\frac{1}{2}} \le \|f''(y_u)\|_{L^2_{\bar\lambda}(Q)}\|z_{v_1}\|_{L^\infty_{\lambda_c}(Q)}\|z_{v_2}\|_{L^\infty_{\lambda_c}(Q)}\\
&\le C_{1,M}^2\|f''(y_u)\|_{L^2_{\bar\lambda}(Q)}\|v_1\|\|v_2\|,
\end{align*}
where we used that $\sigma_s > \bar\lambda + 2{\lambda_c}$ and \eqref{E3.11}. Then, the result follows from Lemma \ref{L3.2}.

Next we prove that
\begin{equation}
\lim_{\|v_1\| \to 0}\sup_{\|v_2\| = 1}\frac{\|\eta_{v_1v_2} - z_{v_2} - z_{v_1v_2}\|_{Y_{\sigma_s}}}{\|v_1\|} = 0.
\label{E3.16}
\end{equation}
Setting $w = \eta_{v_1v_2} - z_{v_2} - z_{v_1v_2}$, using the mean value theorem
\[
f'(y_{u + v_1}) = f'(y_u) + f''(y_\vartheta)(y_{u + v_1}- y_u)\text{ with } y_\vartheta = y_u + \vartheta(y_{u + v_1} - y_u),
\]
and subtracting the associated equations we get
\[
\left\{
\begin{array}{l}
\displaystyle\frac{\partial w}{\partial t} + Aw + f'(y_u)w = -f''(y_\vartheta)(y_{u+v_1} - y_u)\eta_{v_1v_2} + f''(y_u)z_{v_1}z_{v_2}\ \mbox{ in } Q,\\[0.5ex] \partial_{n_A}w = 0 \ \mbox{ on } \Sigma,\   w(0) = 0 \ \mbox{ in } \Omega.
\end{array}
\right.
\]
Applying Lemma \ref{L3.2} we obtain
\begin{align*}
\|w\|_{Y_{\sigma_s}} & \le K_1\|-f''(y_\vartheta)(y_{u+v_1} - y_u)\eta_{v_1v_2} + f''(y_u)z_{v_1}z_{v_2}\|_{L^2_{\sigma_s}(Q)}\\
&\le K_1\Big(\|f''(y_\vartheta)(y_{u+v_1} - y_u)(\eta_{v_1v_2} - z_{v_2})\|_{L^2_{\sigma_s}(Q)}\\
& + \|f''(y_\vartheta)(y_{u+v_1} - y_u -z_{v_1})z_{v_2}\|_{L^2_{\sigma_s}(Q)}\\
&+ \|[f''(y_u) - f''(y_\vartheta)]z_{v_1}z_{v_2}\|_{L^2_{\sigma_s}(Q)}\Big) = K_1(I_1 + I_2 + I_3).
\end{align*}
First we estimate $I_1$. From \eqref{E3.12} we get
\begin{align}
&\|f''(y_\vartheta)(y_{u+v_1} - y_u)(\eta_{v_1v_2} - z_{v_2})\|_{L^2_{\sigma_s}(Q)}\notag\\
&\le C_{2,M}\|v_1\|\|f''(y_\vartheta)(\eta_{v_1v_2} - z_{v_2})\|_{L^2_{\sigma_s - {\lambda_c}}(Q)}.\label{E3.17}
\end{align}
It is enough to prove that $\lim_{\|v_1\| \to 0}\|f''(y_\vartheta)(\eta_{v_1v_2} - z_{v_2})\|_{L^2_{\sigma_s - \lambda_c}(Q)} = 0$. For this purpose we apply Lemma \ref{L2.1} with $r = 2$, $X = L^2(\Omega)$, $\hat\lambda = \sigma_s - \lambda_c$, and $\lambda = \sigma_s - 2\lambda_c$. Note that $\sigma_s - 2{\lambda_c} > \bar\lambda$. First we have with Lemma \ref{L3.1}-ii)
\begin{align*}
&\|f''(y_\vartheta)(\eta_{v_1v_2} - z_{v_2})\|^2_{L^2_{\sigma_s - 2\lambda_c}(Q)}\\
& \le 2\!\int_0^\infty\e^{-\bar\lambda t}\|f''(y_\vartheta)\|^2_{L^2(\Omega)}\!\dt \Big(\|\eta_{v_1v_2}\|^2_{L^\infty_{\lambda_c}(Q)} {+} \|z_{v_2}\|^2_{L^\infty_{\lambda_c}(Q)}\Big) \le C^2_{1,M}\|f''(y_\vartheta)\|^2_{L_{\bar\lambda}^2(Q)} < \infty.
\end{align*}
Now, we prove the convergence $\lim_{\|v_1\| \to 0}\|f''(y_\vartheta)(\eta_{v_1v_2} - z_{v_2})\|_{L^2(Q_T)} = 0$ for every $T < \infty$. Subtracting the equations for $\eta_{v_1v_2}$ and $z_{v_2}$ we get for $\psi = \eta_{v_1v_2} - z_{v_2}$
\[
\left\{\!\!\!
\begin{array}{l}
\displaystyle\frac{\partial\psi}{\partial t} + A\psi + f'(y_u)\psi = -f''(y_\vartheta)(y_{u+v_1}- y_u)\eta_{v_1v_2}\ \mbox{in}\ Q_{T_\varepsilon},\\[0.5ex] \partial_{n_A}\psi = 0 \ \mbox{ on } \Sigma_{T_\varepsilon},\   \psi(0) = 0 \mbox{ in } \Omega.
\end{array}
\right.
\]
Since $\|y_{u+v_1}- y_u\|_ {L^\infty_{\lambda_c}(Q)} \le C_{2,M}\|v_1\|$ we infer that
$\|y_{u+v_1}- y_u\|_ {L^\infty(Q_T)} \to 0$ as $\|v_1\| \to 0$. Combining this with the estimate  $\eta_{v_1v_2}\|_{L^\infty_{\lambda_c}(Q)} \le C_{1,M}\|v_2\| = C_{1,M}$ and the boundedness of $f''(y_\vartheta)$ in $Q_T$ we infer that the right hand side of the above equation converges to 0 in $L^\infty(Q_T)$ and hence $\|\psi\|_{L^\infty(Q_T)} \to 0$ as $\|v_1\| \to 0$. Then, Lemma \ref{L2.1} applies and we deduce from \eqref{E3.17} that $\frac{1}{\|v_1\|}I_1 \to 0$ as $\|v_1\| \to 0$.

Let us estimate $I_2$. Since $\|v_2\| = 1$, we infer from \eqref{E3.11} that
\begin{align*}
I_2 &\le \|f''(y_\vartheta)(y_{u+v_1} - y_u -z_{v_1})\|_{L^2_{\sigma_s-\lambda_c}(Q)}\|z_{v_2}\|_{L^\infty_{\lambda_c}(Q)}\\
& \le C_{1,M}\|f''(y_\vartheta)(y_{u+v_1} - y_u -z_{v_1})\|_{L^2_{\sigma_s - \lambda_c}(Q)}.
\end{align*}
To estimate $\|f''(y_\vartheta)(y_{u+v_1} - y_u -z_{v_1})\|_{L^2_{\sigma_s-\lambda_c}(Q)}$ we use Lemma \ref{L2.1} again with $r = 2$, $X = L^2(\Omega)$, $\hat\lambda = \sigma_s - \lambda_c$, and $\lambda = \lambda_c + \bar\lambda$. First, we get
\begin{align*}
 &\frac{1}{\|v_1\|}\|f''(y_\vartheta)(y_{u+v_1} - y_u -z_{v_1})\|_{L^2_{\lambda_c + \bar\lambda}(Q)}\\
 &\le \frac{\sqrt{2}}{\|v_1\|}\|f''(y_\vartheta)\|_{L^2_{\bar\lambda}(Q)}\Big(\|y_{u+v_1} - y_u\|_{L^\infty_{\lambda_c}(Q)} + \|z_{v_1}\|_{L^\infty_{\lambda_c}(Q)}\Big)\\
 & \le \sqrt{2}(C_{1,M} + C_{2,M})\|f''(y_\vartheta)\|_{L^2_{\bar\lambda}(Q)} < \infty.
\end{align*}
For every $T < \infty$, in the interval $(0,T)$, the equation satisfied by $y_{u+v_1} - y_u -z_{v_1}$ is the same as the one for $w$ in {\em Step I}. Therefore, we have
\[
\|y_{u+v_1} - y_u -z_{v_1}\|_{L^\infty(Q_T)} \le C\|f''(y_\theta)(y_{u+v_1} - y_u)^2\|_{L^\infty(Q_T)} \le C'\|v_1\|^2.
\]
This implies that
\[
\frac{1}{\|v_1\|}\|f''(y_\vartheta)(y_{u+v_1} - y_u -z_{v_1})\|_{L^2(Q_T)} \le C'\|f''(y_\vartheta)\|_{L\infty(Q_T)}\|v_1\| \stackrel{\|v_1\| \to 0}{\longrightarrow} 0.
\]
From Lemma \ref{L2.1} we deduce that $\frac{1}{\|v_1\|}I_2 \to 0$ as $\|v_1\| \to 0$.

Now we estimate $I_3$. First we observe
\begin{align*}
I_3 &\le \|f''(y_u) - f''(y_\vartheta)\|_{L^2_{\sigma_s-2\lambda_c}(Q)}\|z_{v_1}\|_{L^\infty_{\lambda_c}(Q)}\|z_{v_2}\|_{L^\infty_{\lambda_c}(Q)}\\
&\le C^2_{1,M}\|f''(y_u) - f''(y_\vartheta)\|_{L^2_{\sigma_s-2{\lambda_c}}(Q)}\|v_1\|,
\end{align*}
where we have used that $\|v_2\| = 1$. This leads to
\[
\frac{1}{\|v_1\|}I_3 \le C^2_{1,M}\|f''(y_u) - f''(y_\vartheta)\|_{L^2_{\sigma_s - 2{\lambda_c}}}(Q).
\]
Then, it is enough to prove that $\|f''(y_u) - f''(y_\vartheta)\|_{L^2_{\sigma_s - 2{\lambda_c}}}(Q) \to 0$ as $\|v_1\| \to 0$. This follows from Lemma \ref{L2.1} with $r = 2$, $X = L^2(\Omega)$, $\hat\lambda = \sigma_s - 2\lambda_c$ and $\lambda = \bar\lambda$.\vspace{2mm}

{\em Step III. Continuity of $G''$.} Here we prove the continuity of the mapping
\begin{align*}
G'':L^p_{\lambda_c}(0,\infty;L^2(\omega)) &\longrightarrow \mathcal{B}\Big(L^p_{\lambda_c}(0,\infty;L^2(\omega)) \times L^p_{\lambda_c}(0,\infty;L^2(\omega)),Y_{\sigma_s}\Big)\\
&G''(u)(v_1,v_2) = z_{v_1v_2}.
\end{align*}
Let $\{u_k\}_{k = |}^\infty$ be a sequence such that $u_k \to u$ in $L^p_{\lambda_c}(0,\infty;L^2(\omega))$. We have to prove that
\begin{equation}
\lim_{k \to \infty}\sup_{\|v_1\| = \|v_2\| = 1}\|G''(u_k)(v_1,v_2) - G''(u)(v_1,v_2)\|_{Y_{\sigma_s}} = 0.
\label{E3.19}
\end{equation}
We set $z^k_{v_1v_2} = G''(u_k)(v_1,v_2)$, $z_{v_1v_2} = G''(u)(v_1,v_2)$, and $w_k = z^k_{v_1v_2} - z_{v_1v_2}$. Then, subtracting the equations satisfied by $z^k_{v_1v_2}$ and $z_{v_1v_2}$ we get for every $T < \infty$
\[
\left\{
\begin{array}{l}
\displaystyle\frac{\partial w_k}{\partial t} + Aw_k + f'(y_u)w_k\\
\hspace{1cm} = f''(y_u)z_{v_1}z_{v_2} - f''(y_{u_k})z^k_{v_1}z^k_{v_2} - f''(y_{\theta_k})(y_{u_k} - y_u)z^k_{v_1v_2}\ \mbox{ in } Q_T,\\[0.5ex] \partial_{n_A}w_k = 0 \ \mbox{ on } \Sigma_T,\   w_k(0) = 0 \ \mbox{ in } \Omega,
\end{array}
\right.
\]
where $z^k_{v_i} = G'(u_k)v_i$, $z_{v_i} = G'(u)v_i$, $i = 1, 2$, and $y_{\theta_k} = y_u + \theta_k(y_{u_k} - y_u)$ with $\theta_k:Q \longrightarrow [0,1]$ measurable. From \eqref{E2.7} we know that $\{y_{u_k}\}_{k = 1}^\infty$ and $y_u$ are uniformly bounded in $Q_T$. Hence, we apply Lemmas \ref{L3.2} and \ref{L3.3} to deduce that the right hand side of this equation converges to 0 in $L^\infty(Q_T)$. Therefore, using again Lemma \ref{L3.3}  we infer that
\[
\lim_{k \to \infty} \Big(\|z^k_{v_1v_2} - z_{v_1v_2}\|_{L^2(0,T;H^1(\Omega))} +  \|z^k_{v_1v_2} - z_{v_1v_2}\|_{C([0,T];L^2(\Omega))}\Big) = 0.
\]
Further from Lemma \ref{L3.2} we infer
\begin{align*}
&\|z_{v_1v_2}^k\|_{L_{\bar\lambda + 2{\lambda_c}}^2(0,\infty;H^1(\Omega))} + \|z_{v_1v_2}^k\|_{C_{\bar\lambda + 2{\lambda_c}}([0,\infty);L^2(\Omega))}\\
&\le K_1\|f''(y_{u_k})z_{v_1}^kz_{v_2}^k\|_{L^2_{\bar\lambda + 2{\lambda_c}}(Q)} \le  K_1\|f''(y_{u_k})\|_{L_{\bar\lambda}(Q)}\|z_{v_1}^k\|_{L^\infty_{\lambda_c}(Q)}\|z_{v_2}^k\|_{L^\infty_{\lambda_c}(Q)}\\
&\le K_1K_2^2\|f''(y_{u_k})\|_{L_{\bar\lambda}^2(Q)}\|v_1\|\|v_2\| = K_1K_2^2\|f''(y_{u_k})\|_{L^2_{\bar\lambda}(Q)} \le C,
\end{align*}
where we used that $\|v_1\|= \|v_2\| = 1$, \eqref{E3.5}, and the fact that $\{u_k\}_{k = 1}^\infty$ is bounded in $L^p_{\lambda_c}(0,\infty;L^2(\omega))$. Now, we apply Lemma \ref{L2.1} first with $r = 2$ and $X = H^1(\Omega)$ and later with $r = \infty$ and $X = L^2(\Omega)$, $\hat\lambda = \sigma_s$ and $\lambda = \bar\lambda + 2\lambda_c$ in both cases, to get \eqref{E3.19}.
\end{proof}

\begin{remark}
The continuous embedding $L^\infty(0,\infty;L^2(\omega)) \subset L^p_\lambda(0,\infty;L^2(\omega))$ for every $\lambda > 0$ and Theorem \ref{T3.1} imply that $G:L^\infty(0,\infty;L^2(\omega)) \longrightarrow Y_{\sigma_s}$ is also of class $C^2$ and the expressions for $G'(u)v$ and $G''(u)(v_1,v_2)$ given in Theorem \ref{T3.1} remain correct.
\label{R3.1}
\end{remark}

\section{Existence of a solution for \Pb and first-order optimality conditions}
\label{S4}

In this section, we analyze the control problem, proving the existence of a solution and establishing the first-order optimality conditions satisfied by any local solution of \Pb. We say that a control $\bar u \in \Uad$ is a local solution of \Pb if
\begin{equation}
\exists \varepsilon > 0 \text{ such that }J(\bar u) \le J(u) \ \forall u \in \Uad : \|u - \bar u\|_{L^2_{\sigma_c}(Q_\omega)} \le \varepsilon,
\label{E4.1}
\end{equation}
where $Q_\omega = \omega \times (0,\infty)$. Observe that if \eqref{E4.1} is fulfilled, then it also holds for every $\lambda > 0$ and some $\varepsilon_\lambda > 0$. Indeed, if $\lambda < \sigma_c$, then using that $\|u - \bar u\|_{L^2_\lambda(Q_\omega)} \ge \|u - \bar u\|_{L^2_{\sigma_c}(Q_\omega)}$, the above inequality holds with $\lambda$ and $\varepsilon_\lambda = \varepsilon$. For $\lambda > \sigma_c$, we use the boundedness of $\K$ and get that $\|u\|_{L^\infty(0,\infty;L^2(\omega))} \le \gamma_{ad}$ for some $\gamma_{ad} < \infty$ and all $u \in \Uad$. Thus, we have
\begin{align}
&\|u - \bar u\|_{L^2_{\sigma_c}(Q_\omega)} \le \|u - \bar u\|^{1 - \frac{\sigma_c}{2\lambda}}_{L^\infty(0,\infty;L^2(\omega))}\Big(\int_0^\infty \e^{-\sigma_ct}\|u(t) - \bar u(t)\|^{\frac{\sigma_c}{\lambda}}_{L^2(\omega)}\dt\Big)^{\frac{1}{2}}\notag\\
&\le (2\gamma_{ad})^{1 - \frac{\sigma_c}{2\lambda}} \Big(\int_0^\infty \e^{-\frac{\sigma_c}{2}t}\|u(t) - \bar u(t)\|^{\frac{\sigma_c}{\lambda}}_{L^2(\omega)}\e^{-\frac{\sigma_c}{2}t}\dt\Big)^{\frac{1}{2}}\notag\\
&\le (2\gamma_{ad})^{1 - \frac{\sigma_c}{2\lambda}}\Big(\int_0^\infty \e^{-\frac{\sigma_c\lambda}{2\lambda-\sigma_c}t}\dt\Big)^{\frac{2\lambda-\sigma_c}{4\lambda}}\Big(\int_0^\infty \e^{-\lambda t}\|u(t) - \bar u(t)\|^2_{L^2(\omega)}\dt\Big)^{\frac{\sigma_c}{4\lambda}}.\notag\\
&= C_{\sigma_c,\lambda}\|u - \bar u\|_{L^2_{\lambda}(Q_\omega)}^{\frac{\sigma_c}{2\lambda}}.\label{E4.1A}
\end{align}
Then, it is enough to take $\varepsilon_\lambda = \Big(\frac{1}{C_{\sigma_c,\lambda}}\varepsilon\Big)^{\frac{2\lambda}{\sigma_c}}$. It is also immediate to check that $\bar u$ is a local solution in the $L^2_{\sigma_c}(Q_\omega)$ sense, as above defined, if and only if it is a local solution in the $L_\lambda^r(0,\infty;L^2(\omega))$ sense for every $r \in [1,\infty)$.

This follows from the following inequalities:
\begin{align*}
&\text{If } r > 2 \text{ then}\\
&\|u - \bar u\|_{L^r_\lambda(0,\infty;L^2(\omega))} \le \|u - \bar u\|^{\frac{r - 2}{r}}_{L^\infty(0,\infty;L^2(\omega))}\|u - \bar u\|^{\frac{2}{r}}_{L^2_\lambda(Q_\omega)} \le (2\gamma_{ad})^{\frac{2}{r}}\|u - \bar u\|^{\frac{2}{r}}_{L^2_\lambda(Q_\omega)},\\
&\|u - \bar u\|_{L^2(Q_\omega)} \le \Big(\int_0^\infty\e^{-\lambda t}\dt\Big)^{\frac{r - 2}{2r}}\|u - \bar u\|_{L^r_\lambda(0,\infty;L^2(\omega))} = \frac{1}{\lambda^{\frac{r - 2}{2r}}}\|u - \bar u\|_{L^r_\lambda(0,\infty;L^2(\omega))},\\
&\text{if } r < 2 \text{ then}\\
&\|u - \bar u\|_{L^r_\lambda(0,\infty;L^2(\omega))} \le \Big(\int_0^\infty\e^{-\frac{r\lambda}{r - 2}t}\dt\Big)^{\frac{2 - r}{2r}}\|u - \bar u\|_{L^2_\lambda(Q_\omega)} = \Big(\frac{r-2}{r\lambda}\Big)^{\frac{2 - r}{2r}}\|u - \bar u\|_{L^2_\lambda(Q_\omega)},\\
&\|u - \bar u\|_{L^2(Q_\omega)} \le (2\gamma_{ad})^{\frac{2 - r}{2}}\|u - \bar u\|^{\frac{r}{2}}_{L^r_\lambda(0,\infty;L^2(\omega))}.
\end{align*}
We also have that any local solution in the $L^2_{\sigma_c}(Q_\omega)$ sense is also a local solution in the $L_\lambda^\infty(0,\infty;L^2(\omega))$ sense, but the contrary implication fails.

\begin{theorem}
Problem \Pb has at least one solution.
\label{T4.1}
\end{theorem}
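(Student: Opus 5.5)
We use the direct method of the calculus of variations.

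\emph{Step 1: $J$ is finite on $\Uad$.} Since $\K$ is bounded, $\Uad$ is a bounded subset of $L^\infty(0,\infty;L^2(\omega))$, say $\|u\|_{L^\infty(0,\infty;L^2(\omega))}\le\gamma_{ad}$.
\begin{itemize}
\item Control term: $\int_0^\infty \e^{-\sigma_c t}\|u(t)\|^2_{L^2(\omega)}\dt \le \gamma_{ad}^2/\sigma_c$.
\item State term: by \eqref{E3.2}, $\sigma_s > \lambda_c > -2\Lambda_f$. Remark \ref{R2.1} and Theorem \ref{T2.1}, applied with $h = g + u\chi_\omega \in L^2_{\lambda_c}(Q)$ and $\lambda = \sigma_s$, give $y_u \in C_{\sigma_s}[0,\infty;L^2(\Omega))\cap L^2_{\sigma_s}(Q)$.
\item Target term: $y_d \in L^p_{\sigma_s}(0,\infty;L^2(\Omega)) \subset L^2_{\sigma_s}(Q)$ by H\"older (as in the inequalities before the theorem).
\end{itemize}
Hence $0\le J(u)<\infty$ for all $u\in\Uad$, and $j := \inf_{\Uad} J$ is finite. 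Take a minimizing sequence $\{u_k\}\subset\Uad$ with $J(u_k)\to j$.

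\emph{Step 2: compactness and admissibility of the limit.} $\{u_k\}$ is bounded in $L^\infty(0,\infty;L^2(\omega)) = L^1(0,\infty;L^2(\omega))^*$, the latter space being separable. We can therefore extract a subsequence with $u_k \stackrel{*}{\rightharpoonup} \bar u$ in $L^\infty(0,\infty;L^2(\omega))$.

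To see that $\bar u\in\Uad$, note that $\Uad$ is convex and closed in $L^2(0,T;L^2(\omega))$ for every $T$: a strongly convergent sequence has an a.e.\ convergent subsequence in $t$, and $\K$ is closed. By Mazur's lemma $\Uad$ is then weakly closed there. Since the weak$^*$ convergence implies weak convergence in $L^2(0,T;L^2(\omega))$, we get $\bar u(t)\in\K$ for a.a.\ $t\in(0,T)$. Letting $T\to\infty$ gives $\bar u\in\Uad$.

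\emph{Step 3: passage to the limit.} By Remark \ref{R2.2} with $\lambda=\sigma_s$ (admissible since $g\in L^2_{\sigma_s}(Q)$), $y_{u_k}\to y_{\bar u}$ strongly in $Y_{\sigma_s}$, in particular in $L^2_{\sigma_s}(Q)$. The tracking term therefore converges:
\[
\int_0^\infty \e^{-\sigma_s t}\|y_{u_k}-y_d\|^2_{L^2(\Omega)}\dt \longrightarrow \int_0^\infty \e^{-\sigma_s t}\|y_{\bar u}-y_d\|^2_{L^2(\Omega)}\dt .
\]
For the control term, $u_k\rightharpoonup\bar u$ weakly in $L^2_{\sigma_c}(Q_\omega)$: for $\phi$ in the dual, $\e^{-\sigma_c t}\phi\in L^1(0,\infty;L^2(\omega))$. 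The term $\|u\|^2_{L^2_{\sigma_c}(Q_\omega)}$ is a continuous convex functional, hence weakly lower semicontinuous, so
\[
\|\bar u\|^2_{L^2_{\sigma_c}(Q_\omega)} \le \liminf_{k\to\infty}\|u_k\|^2_{L^2_{\sigma_c}(Q_\omega)}.
\]
Combining the two terms yields $J(\bar u)\le\liminf_k J(u_k)=j$, so $\bar u$ is a solution of \Pb.

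The main obstacle is the strong convergence of the states on the unbounded time interval. It is handled by Lemma \ref{L2.2} and Remark \ref{R2.2}, through the weight gap $\sigma_s>\lambda_c$ and Lemma \ref{L2.1}; everything else is routine.
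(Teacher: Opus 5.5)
Your proof is correct and follows the paper's argument. The paper takes a minimizing sequence in $\Uad$, extracts a weak$^*$ limit in $L^\infty(0,\infty;L^2(\omega))$ and applies \eqref{E2.18}, exactly as you do, while you also spell out what the paper leaves implicit: finiteness of $J$, weak closedness of $\Uad$, weak lower semicontinuity of the control cost, and the fact that the strong convergence of the states in $L^2_{\sigma_s}(Q)$ rests on Lemma \ref{L2.2} applied with $\lambda=\lambda_c<\sigma_s$.
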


To prove this theorem it is enough to take a minimizing sequence $\{u_k\}_{k = 1}^\infty \subset \Uad$ weakly$^*$ converging to some $\bar u$ in $L^\infty(0,\infty;L^2(\omega))$ and apply \eqref{E2.18}.

Next we derive the first-order optimality conditions satisfied by any local solution of \Pb. For this purpose we first analyze the adjoint state equation.

\begin{theorem}
Assume that $\lambda > -2\Lambda_f$, $h \in L^2_\lambda(Q)$, and $u \in L^\infty(0,\infty;L^2(\omega))$. Then, the problem
\begin{equation}
\left\{
\begin{array}{l}
\displaystyle-\frac{\partial\varphi}{\partial t} + A\varphi + f'(y_u)\varphi = \e^{-\lambda t}h\ \mbox{ in } Q,\\[0.5ex] \partial_{n_A}\varphi = 0 \ \mbox{ on } \Sigma,\   \lim_{t \to \infty}\e^{-\Lambda_f t}\|\varphi(t)\|_{L^2(\Omega)} = 0,
\end{array}
\right.
\label{E4.2}
\end{equation}
has a unique solution $\varphi \in Y_{-\lambda}$ and there exists a constant $C$ independent of $u$ and $h$ such that
\begin{equation}
\|\varphi\|_{Y_{-\lambda}} \le C\|h\|_{L^2_\lambda(Q)}.
\label{E4.3}
\end{equation}
In addition, if $h \in L^p_{\frac{p}{2}\lambda}(0,\infty;L^2(\Omega))$, then we have $\varphi \in C_{-\lambda}(\bar Q)$ and there exists a constant $C_\infty$ independent of $u$ and $h$ such that
\begin{equation}
\|\varphi\|_{C_{-\lambda}(\bar Q)} \le C_\infty\|h\|_{L^p_{\frac{p}{2}\lambda}(0,\infty;L^2(\Omega))}.
\label{E4.4}
\end{equation}
Further, if $u_k \stackrel{*}{\rightharpoonup} u$ in $L^\infty(0,\infty;L^2(\omega))$ and $h_k \to h$ in $L^p_{\frac{p}{2}\lambda}(0,\infty;L^2(\Omega))$ hold, then
\begin{equation}
\varphi_k \stackrel{*}{\rightharpoonup} \varphi \ \text{ in } Y_{-\lambda}\ \text{ and }\ \lim_{k \to \infty}\Big(\|\varphi_k - \varphi\|_{Y_{-\hat\lambda}} + \|\varphi_k - \varphi\|_{C_{-\hat\lambda}(\bar Q)}\Big) = 0\quad \forall\hat\lambda < \lambda.
\label{E4.5}
\end{equation}
where $\varphi_k$ is the solution of \eqref{E4.2} with $(u,h)$ replaced by $(u_k,h_k)$.
\label{T4.2}
\end{theorem}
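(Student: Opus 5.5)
The plan is to reduce \eqref{E4.2} to a forward problem for a rescaled unknown. Consider $\psi = \e^{\lambda t}\varphi$, so that $\varphi\in Y_{-\lambda}$ exactly when $\psi\in Y_\lambda$. Formally $\psi$ satisfies
\[
-\frac{\partial\psi}{\partial t} + \lambda\psi + A\psi + f'(y_u)\psi = h .
\]
On each bounded interval $(0,T)$ we approximate by the backward problem with terminal datum $\varphi_T(T)=0$. Reversing time, $s=T-t$, this is a standard linear parabolic problem with the bounded potential $f'(y_u)\in L^\infty(Q_T)$; boundedness comes from \eqref{E2.7}, since $g+u\chi_\omega\in L^p_{\lambda_c}$.

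\textbf{Step 1: uniform estimate.} Test the equation for $\varphi_T$ with $\e^{\lambda t}\varphi_T$ and integrate over $(t,T)$, using $f'(y_u)\ge\Lambda_f$ and \eqref{E1.9}. The term $-\int \e^{\lambda t}\langle\partial_t\varphi_T,\varphi_T\rangle$ produces $\frac12\e^{\lambda t}\|\varphi_T(t)\|^2$ plus $\frac{\lambda}{2}\int\e^{\lambda s}\|\varphi_T\|^2$, with the correct sign. Exactly as in Lemma \ref{L3.2}, we obtain
\[
\sup_t \e^{\frac{\lambda}{2}t}\|\varphi_T(t)\| + \sqrt{\min\{\tfrac{\lambda}{2}+\Lambda_f,\Lambda_A\}}\,\|\varphi_T\|_{L^2_{-\lambda}(0,T;H^1)} \le C\|h\|_{L^2_\lambda(Q)},
\]
with $C$ independent of $T$ and $u$. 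Extend $\varphi_T$ by zero to $t>T$. A weak limit along a subsequence as $T\to\infty$ then gives a solution $\varphi\in Y_{-\lambda}$ satisfying \eqref{E4.3}; the time derivative is handled on bounded intervals as in Lemma \ref{L2.2}. The terminal condition holds because $\e^{\lambda t}\|\varphi(t)\|^2$ is bounded and $\lambda>-2\Lambda_f$. Hence $\e^{-\Lambda_f t}\|\varphi(t)\|\le C\e^{-(\frac{\lambda}{2}+\Lambda_f)t}\to0$.

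\textbf{Uniqueness.} Take the difference $\delta$ of two solutions, so $\delta$ solves the homogeneous equation with $\e^{-\Lambda_f t}\|\delta(t)\|\to0$. Test on $(t,T)$ with $\e^{2\Lambda_f s}\delta$ and let $T\to\infty$. The boundary term $\e^{2\Lambda_f T}\|\delta(T)\|^2$ vanishes, and the remaining terms are nonnegative because $f'(y_u)-\Lambda_f\ge0$. This forces $\delta=0$. This is precisely why the terminal condition is stated with the weight $\e^{-\Lambda_f t}$.

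\textbf{Step 2: the $C_{-\lambda}(\bar Q)$ bound.} Rewrite the equation for $\psi$ with right-hand side $h$. If $h\in L^p_{\frac p2\lambda}(0,\infty;L^2)$, then $\e^{-\frac{\lambda}{2}t}(\e^{\lambda t}h)$, which is the natural weighted datum, lies in $L^p(0,\infty;L^2)$. We then repeat the truncation argument of Lemma \ref{L3.2}, Step II, in reversed time on $(t,T)$. Set $\varphi_\rho=\e^{\frac{\lambda}{2}t}\varphi-\proj_{[-\rho,\rho]}(\e^{\frac{\lambda}{2}t}\varphi)$ and use the coercive term $(\frac{\lambda}{2}+\Lambda_f)$; this yields the level-set inequality (A.8) of \cite{Casas-Kunisch2023C}, uniformly in $T$. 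Continuity follows on each $\bar Q_T$. I expect this to be the main technical obstacle: the bookkeeping of exponents must be checked so that the datum weight $\frac p2\lambda$ matches the estimate, and the time reversal on a half-line must be handled uniformly in $T$.

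\textbf{Step 3: convergence \eqref{E4.5}.} By \eqref{E2.18}, $y_{u_k}\to y_u$ in $L^\infty(Q_T)$ for every $T$. The bounds \eqref{E4.3}--\eqref{E4.4} give uniform bounds on $\varphi_k$, so there is a weak$^*$ limit. Passing to the limit in the equation, and using uniqueness, identifies this limit as $\varphi$ for the whole sequence. Next, $\varphi_k-\varphi$ solves an equation with right-hand side $\e^{-\lambda t}(h_k-h)-(f'(y_{u_k})-f'(y_u))\varphi$. On bounded intervals $(0,T)$ this gives strong convergence in $Y$ and in $C(\bar Q_T)$: compactness via Hölder estimates, as in Lemma \ref{L2.2}, handles the unknown terminal value. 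Finally, Lemma \ref{L2.1}, applied with weights $-\lambda<-\hat\lambda$, upgrades this to convergence in $Y_{-\hat\lambda}$ and $C_{-\hat\lambda}(\bar Q)$.
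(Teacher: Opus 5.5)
Your skeleton matches the paper's proof. You use truncated backward problems with $\varphi_T(T)=0$ and the $\e^{\lambda t}$-weighted energy estimate uniform in $T$. A weak$^*$ limit gives existence and \eqref{E4.3}. De Giorgi truncation gives an $L^\infty$ bound for $\e^{\frac{\lambda}{2}t}\varphi$, and Lemma \ref{L2.1} upgrades local convergence.

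\textbf{Uniqueness fails as written.} Testing with $\e^{2\Lambda_f s}\delta$, the time-derivative term contributes $+\Lambda_f\int_t^T\e^{2\Lambda_f s}\|\delta(s)\|^2_{L^2(\Omega)}\ds$. The zero-order coefficient is then $f'(y_u)+\Lambda_f\ge 2\Lambda_f$, which need not be nonnegative when $\Lambda_f<0$, so your positivity claim is false. The weight must be $\e^{-2\Lambda_f s}$, which gives
\[
\frac12\e^{-2\Lambda_f t}\|\delta(t)\|^2_{L^2(\Omega)}+\int_t^T\e^{-2\Lambda_f s}\Big[a(\delta(s),\delta(s))+\int_\Omega\big(f'(y_u)-\Lambda_f\big)\delta^2\dx\Big]\ds=\frac12\Big(\e^{-\Lambda_f T}\|\delta(T)\|_{L^2(\Omega)}\Big)^2.
\]
Every term on the left is nonnegative, and the right-hand side tends to $0$ exactly by the terminal condition, which is what you intended. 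The paper does not prove uniqueness; it cites \cite[Theorem 3.2]{Casas-Kunisch2023B}. Your corrected argument is a self-contained replacement. It even gives uniqueness among all solutions satisfying the decay condition, not only those in $Y_{-\lambda}$.

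\textbf{Step 2 bookkeeping.} The exponents are off: $\e^{-\frac{\lambda}{2}t}(\e^{\lambda t}h)=\e^{\frac{\lambda}{2}t}h$ is not in $L^p(0,\infty;L^2(\Omega))$ in general. The datum for $\e^{\frac{\lambda}{2}t}\varphi$, the function you actually truncate, is $\e^{-\frac{\lambda}{2}t}h$. Its norm satisfies $\|\e^{-\frac{\lambda}{2}t}h\|_{L^p(0,\infty;L^2(\Omega))}=\|h\|_{L^p_{\frac p2\lambda}(0,\infty;L^2(\Omega))}$, which is exactly what \eqref{E4.4} requires.

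\textbf{Continuity of the limit.} The uniform $L^\infty$ bound passes to the weak$^*$ limit, but continuity of $\varphi$ does not. So ``continuity follows on each $\bar Q_T$'' needs an argument. The paper's argument is short and fits your Step 2. For zero extensions with $T_1<T_2$, $\e^{\frac{\lambda}{2}t}(\hat\varphi_{T_2}-\hat\varphi_{T_1})$ solves the same problem with datum $\e^{-\frac{\lambda}{2}t}\chi_{(T_1,T_2)}h$. Your uniform estimate then makes the truncations Cauchy in $C(\bar Q)$.

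\textbf{Local convergence in Step 3.} Here your route differs from the paper's. The paper uses the uniform bound $\|\varphi_k(t)-\varphi(t)\|_{L^\infty(\Omega)}\le C\e^{-\frac{\lambda}{2}t}$ to make the terminal value at a large time uniformly small. It then applies a stability estimate on $(0,T)$ with the vanishing right-hand side, with no compactness. Your Arzel\`a--Ascoli route is also valid, with one caveat. The H\"older bound must be taken on $[0,T]$ for the problem posed on $(0,T+1)$, i.e.\ interior in time relative to the merely bounded terminal datum $\varphi_k(T+1)$. That is a different estimate from the one in Lemma \ref{L2.2}, which concerns zero initial data.
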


\begin{proof}
For the proof of uniqueness, the reader is referred to \cite[Theorem 3.2]{Casas-Kunisch2023B}. Next we prove the existence of a solution and the associated estimates.\vspace{0.2mm}

{\em Step I - Existence of a solution.} Let $\{T_k\}_{k = 1}^\infty$ be a monotone increasing sequence of positive numbers converging to infinite. For every $k$ we consider the following equation
\begin{equation}
\left\{
\begin{array}{l}
\displaystyle-\frac{\partial\varphi_k}{\partial t} + A\varphi_k + f'(y_u)\varphi_k = \e^{-\lambda t}h\ \mbox{ in } Q_{T_k},\\[0.5ex] \partial_{n_A}\varphi_k = 0 \ \mbox{ on } \Sigma_{T_k},\   \varphi_k(T_k)= 0 \ \mbox{ in } \Omega.
\end{array}
\right.
\label{E4.6}
\end{equation}
Existence and uniqueness of a solution $\varphi_k \in  H^1(Q_{T_k}) \! \cap  \!C([0,T_k],H^1(\Omega))$ are well known; see \cite[Corollary 2.4]{Showalter1997}. Testing equation \eqref{E4.6} with $\e^{\lambda t}\varphi_k$ for $t \in [0,T_k)$ we get
\begin{align*}
&\frac{\e^{\lambda t}}{2}\|\varphi_k(t)\|^2_{L^2(\Omega)} + \min\{\frac{\lambda}{2} + \Lambda_f,\Lambda_A\}\int_t^{T_k}\e^{\lambda s}\|\varphi_k(s)\|^2_{H^1(\Omega))}\ds\\
&\le \frac{\e^{\lambda t}}{2}\|\varphi_k(t)\|^2_{L^2(\Omega)} + \frac{\lambda}{2}\int_t^{T_k}\e^{\lambda s}\|\varphi_k(s)\|^2_{L^2(\Omega)}\ds + \int_t^{T_k}\e^{\lambda s}a(\varphi_k(s),\varphi_k(s))\ds\\
& + \int_t^{T_k}\int_\Omega\e^{\lambda s}f'(y_u)\varphi_k^2\dx\ds = \int_t^{T_k}\int_\Omega h\varphi_k\dx\ds\\
& \le \frac{1}{2\min\{\frac{\lambda}{2} + \Lambda_f,\Lambda_A\}}\|h\|^2_{L^2_\lambda(Q)} + \frac{\min\{\frac{\lambda}{2} + \Lambda_f,\Lambda_A\}}{2}\int_t^{T_k}\e^{\lambda s}\|\varphi_k(s)\|^2_{L^2(\Omega)}\ds.
\end{align*}
Let us denote by $\hat\varphi_k$ the extension by zero of $\varphi_k$ to $Q$. Then, from the above inequality we infer the existence of a constant $C > 0$ such that
\begin{equation}
\|\hat\varphi_k\|_{Y_{-\lambda}} \le C\|h\|_{L^2_\lambda(Q)}.
\label{E4.7}
\end{equation}
Therefore, there exists a subsequence, denoted in the same way, such that
\[
\hat\varphi_k \stackrel{*}{\rightharpoonup} \varphi \text{ in } L^\infty_{-\lambda}((0,\infty);L^2(\Omega))\ \text{ and }\ \hat\varphi_k \rightharpoonup \varphi \text{ in } L^2_{-\lambda}(0,\infty;H^1(\Omega)).
\]
Passing to the limit in \eqref{E4.6} we deduce that $\varphi$ satisfies  equation \eqref{E4.2}. The continuity of $\varphi:[0,\infty) \longrightarrow L^2(\Omega)$ follows from the fact that $\varphi \in W(0,T) \subset C([0,T];L^2(\Omega))$ for every $T < \infty$. Inequality \eqref{E4.3} follows from \eqref{E4.7}. It only remains to verify the last equality in \eqref{E4.2}:
\begin{align*}
& \lim_{t \to \infty}\e^{-\Lambda_f t}\|\varphi(t)\|_{L^2(\Omega)} = \lim_{t \to \infty}\e^{-(\Lambda_f + \frac{\lambda}{2})t}\e^{\frac{\lambda}{2}t}\|\varphi(t)\|_{L^2(\Omega)}\\
& \le C\|h\|_{L^2_\lambda(Q)}\lim_{t \to \infty}\e^{-(\Lambda_f + \frac{\lambda}{2})t} = 0,
\end{align*}
where we have used that $\lambda > -2\Lambda_f$.\vspace{2mm}

{\em Step II - Proof of \eqref{E4.4}.} Let us set $\psi_k = \e^{\frac{\lambda}{2}t}\hat\varphi_k$. From \eqref{E4.6} and the definition of $\hat\varphi_k$ we infer
\begin{equation}
\left\{
\begin{array}{l}
\displaystyle-\frac{\partial\psi_k}{\partial t} + \frac{\lambda}{2} \psi_k + A\psi_k + f'(y_u)\psi_k = \e^{-\frac{\lambda}{2}t}\chi_{(0,T_k)}h\ \mbox{ in } Q,\\[0.5ex]\displaystyle \partial_{n_A}\psi_k = 0 \ \mbox{ on } \Sigma,\   \lim_{t \to \infty}\|\psi_k(t)\|_{L^2(\Omega)} = 0.
\end{array}\right.
\label{E4.8}
\end{equation}
We apply \cite[Theorem A.4]{Casas-Kunisch2023C} to the above equation and recall that $L^p_\lambda(0,\infty;L^2(\Omega)) \subset L^2_\lambda(Q)$ to deduce that $\psi_k \in L^\infty(Q)$ and the existence of a constant $C_\infty$ such that
\begin{equation}
\|\psi_k\|_{L^\infty(Q)} \le C_\infty\|\e^{-\frac{\lambda}{2}t}h\|_{L^p(0,T_k;L^2(\Omega))} = C_\infty\|h\|_{L_{\frac{p}{2}\lambda}^p(0,\infty;L^2(\Omega))}.
\label{E4.9}
\end{equation}
Moreover, from \eqref{E4.6} we deduce that $\varphi_k$ is a continuous function in $\bar Q_{T_k}$. This implies the continuity of $\hat\varphi_k$ in $\bar Q$. Along with the above estimate this yields $\{\psi_k\}_{k = 1}^\infty \subset C(\bar Q)$. From the convergence $\hat\varphi_k \stackrel{*}{\rightharpoonup} \varphi$ in $Y_{-\lambda}$, we infer that $\psi_k \stackrel{*}{\rightharpoonup} \psi = \e^{\frac{\lambda}{2}t}\varphi$ in $C([0,\infty;L^2(\Omega)) \cap L^2(0,\infty;H^1(\Omega))$. Now we prove that $\{\psi_k\}_{k = 1}^\infty$ is a Cauchy sequence in $C(\bar Q)$. Given $\varepsilon > 0$ we select $T_{k_\varepsilon} < \infty$ such  that $\|h\|_{L^p_{\frac{p}{2}\lambda}(T_{k_\varepsilon},\infty;L^2(\Omega))} < \varepsilon$. Suppose that $k_2 > k_1 \ge k_\varepsilon$. Subtracting the equations satisfied by $\psi_{k_2}$ and $\psi_{k_1}$, we again infer from \cite[Theorem A.4]{Casas-Kunisch2023C}
\[
\|\psi_{k_2} - \psi_{k_1}\|_{C(\bar Q)} \le C_\infty\|\e^{-\frac{\lambda}{2}t}h\|_{L^p(T_{k_1},T_{k_2};L^2(\Omega))} \le C_\infty\|h\|_{L_{\frac{p}{2}\lambda}^p(T_{k_\varepsilon},\infty;L^2(\Omega))} < C_\infty\varepsilon.
\]
Then, we have $\lim_{k \to \infty}\|\psi - \psi_k\|_{C(\bar Q)} = 0$. This together with \eqref{E4.9} proves \eqref{E4.4}.

{\em Step III - Proof of \eqref{E4.5}.} Let us set $\psi_k = \varphi_k - \varphi$. Subtracting the equations satisfied by $\varphi_k$ and $\varphi$ we obtain
\begin{equation}
\left\{\hspace{-0.2cm}
\begin{array}{l}
\displaystyle-\frac{\partial\psi_k}{\partial t} + A\psi_k + f'(y_u)\psi_k = \e^{-\lambda t}\Big[(h_k\! -\! h) + [f'(y_u)\! -\! f'(y_{u_k})]\e^{\lambda t}\varphi_k\Big]\mbox{ in } Q,\vspace{0.3mm}\\ \partial_{n_A}\psi_k = 0 \ \mbox{ on } \Sigma,\   \lim_{t \to \infty}\e^{\Lambda_f t}\|\psi_k(t)\|_{L^2(\Omega)} = 0.
\end{array}
\right.
\label{E4.10}
\end{equation}
From the convergence $h_k \to h$ in {$L^p_{\frac{p}{2}\lambda}(0,\infty;L^2(\Omega))$}, \eqref{E4.3}
and \eqref{E4.4} we infer the boundedness of $\{\varphi_k\}_{k = 1}^\infty$ in $Y_{-\lambda} \cap C_{-\lambda}(\bar Q)$. From the definition of $\psi_k$  we deduce the boundedness of $\{\psi_k\}_{k = 1}^\infty$ in $Y_{-\lambda} \cap C_{-\lambda}(\bar Q)$. It is enough to prove $\lim_{k \to \infty}(\|\psi_k\|_{L^2(0,T;H^1(\Omega))} + \|\psi_k\|_{C(\bar Q_T)}) = 0$ and apply Lemma \ref{L2.1} to deduce \eqref{E4.5}. From the boundedness of $\{\psi_k\}_{k = 1}^\infty$ in $C_{-\lambda}(\bar Q)$ we infer the existence of a constant $C_1$ such that $\e^{\frac{\lambda}{2}t}|\psi_k(x,t)| \le C_1$ for all $k \ge 1$ and all $(x,t) \in \bar Q$. Then, given $\varepsilon > 0$ we select $T_\varepsilon < \infty$ such that $\e^{-\frac{\lambda}{2}T_\varepsilon} < \frac{\varepsilon}{C_1}$. Hence, we have
\[
\|\psi_k(t)\|_{L^\infty(\Omega)} \le C_1\e^{-\frac{\lambda}{2}t} < \varepsilon\quad \forall t \ge T_\varepsilon\text{ and } \forall k \ge 1.
\]
For every $T < \infty$, we get from \eqref{E4.10}
\begin{align*}
&\|\psi_k\|_{L^\infty(Q_T)} + \|\psi_k\|_{L^2(0,T;H^1(\Omega))} \le C\Big(\|\psi_k(T)\|_{L^\infty(\Omega)}\\
&+ \Big\|\e^{-\lambda t}\Big[(h_k\! -\! h) + [f'(y_u)\! -\! f'(y_{u_k})]\e^{\lambda t}\varphi_k\Big]\Big\|_{L^p(0,T;L^2(\Omega))}\Big).
\end{align*}
By assumption, we know that $\e^{-\lambda t}(h_k - h) \to 0$ in $L^p(0,T;L^2(\Omega))$. Moreover, \eqref{E2.18} and \eqref{E3.2} imply $\|y_{u_k} - y_u\|_{L^\infty_{\lambda_c}(Q)} \to 0$ and, hence, we have $\|y_{u_k} - y_u\|_{L^\infty(Q_T)} \to 0$ and $\|f'(y_{u_k}) - f'(y_u)\|_{L^\infty(Q_T)} \to 0$. Therefore, using the boundedness of $\{\varphi_k\}_{k = 1}^\infty$ in $C(\bar Q)$ we deduce $\|[f'(y_{u_k}) - f'(y_u)]\varphi_k\|_{L^\infty(Q_T)} \to 0$ for every $T < \infty$. Thus, we get
\[
\limsup_{k \to \infty}\Big(\|\psi_k\|_{L^\infty(Q_T)} + \|\psi_k\|_{L^2(0,T;H^1(\Omega))}\Big) \le C\varepsilon\quad \forall T > 0.
\]
Indeed, if $T \ge T_\varepsilon$ the inequality follows from the definition of $T_\varepsilon$. If $T < T_\varepsilon$, we notice that $\|\psi_k\|_{L^\infty(Q_T)} + \|\psi_k\|_{L^2(0,T;H^1(\Omega))} \le \|\psi_k\|_{L^\infty(Q_{T_\varepsilon})} + \|\psi_k\|_{L^2(0,T_\varepsilon;H^1(\Omega))}$. Since $\varepsilon > 0$ is arbitrarily small, the convergence $\lim_{k \to \infty}(\|\psi_k\|_{L^2(0,T;H^1(\Omega))} + \|\psi_k\|_{C(\bar Q_T)}) = 0$ follows for every $T < \infty$. Now, we apply Lemma \ref{L2.1} first with $X = H^1(\Omega)$ and $r = 2$, and later with $X = L^\infty(\Omega)$ and $r = \infty$ to establish the convergence $\psi_k \to 0$ in $Y_{-\hat\lambda} \cap C_{-\hat\lambda}(\bar Q)$ for every $\hat\lambda < \lambda$.
\end{proof}

The following corollary is essential in the first- and second-order analysis. We should also mention that the assumption that $p \le 6$ plays a key role in this corollary.  For the rest of the paper we make the following assumption on $\sigma_c$\vspace{2mm}

\begin{corollary}
For every $u \in L^p_{\lambda_c}(0,\infty;L^2(\omega))$, the adjoint state equation
\begin{equation}
\left\{
\begin{array}{l}
\displaystyle-\frac{\partial\varphi}{\partial t} + A\varphi + f'(y_u)\varphi = \e^{-\sigma_s t}(y_u - y_d)\ \mbox{ in } Q,\\[0.5ex] \partial_{n_A}\varphi = 0 \ \mbox{ on } \Sigma,\   \lim_{t \to \infty}\e^{-\Lambda_f t}\|\varphi(t)\|_{L^2(\Omega)} = 0.
\end{array}
\right.
\label{E4.11}
\end{equation}
has a unique solution $\varphi_u \in C_{-\sigma_s}(\bar Q) \cap L^2_{-\sigma_s}(0,\infty;H^1(\Omega))$. Moreover, if $u_k \stackrel{*}{\rightharpoonup} u$ in $L^\infty(0,\infty;L^2(\omega))$, then $\lim_{k \to \infty}\Big(\|\varphi_{u_k} - \varphi_u\|_{Y_{-\lambda}} + \|\varphi_{u_k} - \varphi_u\|_{C_{-\lambda}(\bar Q)}\Big) = 0$ for all $\lambda < \sigma_s$.
\label{C4.1}
\end{corollary}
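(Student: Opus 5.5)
The plan is to apply Theorem \ref{T4.2} with $\lambda = \sigma_s$ and $h = y_u - y_d$. We check its hypotheses and then read off the conclusions.

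First, $\sigma_s > -2(q+3)\Lambda_f \ge -2\Lambda_f$ because $\Lambda_f \le 0$, so $\lambda = \sigma_s$ is admissible. Next we need $h \in L^2_{\sigma_s}(Q)$, and for the $C_{-\sigma_s}(\bar Q)$ regularity the stronger condition $h \in L^p_{\frac p2\sigma_s}(0,\infty;L^2(\Omega))$. For $y_d$ this holds because $y_d \in L^p_{\sigma_s}(0,\infty;L^2(\Omega))$ by \eqref{E3.3} and $\frac p2 \sigma_s \ge \sigma_s$ (recall $p \ge 2$), so $\int_0^\infty \e^{-\frac p2\sigma_s t}\|y_d\|^p\dt \le \int_0^\infty \e^{-\sigma_s t}\|y_d\|^p\dt$.

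For $y_u$, Theorem \ref{T2.1} with $\lambda = \lambda_c$ and right-hand side $g + u\chi_\omega \in L^p_{\lambda_c}(0,\infty;L^2(\Omega))$ gives $y_u \in L^\infty_{\lambda_c}(Q)$, i.e. $\|y_u(t)\|_{L^2(\Omega)} \le |\Omega|^{1/2}C\e^{\frac{\lambda_c}{2}t}$. Then
\[
\int_0^\infty \e^{-\frac p2\sigma_s t}\|y_u(t)\|^p_{L^2(\Omega)}\dt \le C\int_0^\infty \e^{-\frac p2(\sigma_s - \lambda_c)t}\dt < \infty,
\]
since $\lambda_c < \sigma_s$ by \eqref{E3.2}. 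The $L^2_{\sigma_s}$ bound follows in the same way. Estimates \eqref{E4.3} and \eqref{E4.4} then give $\varphi_u \in Y_{-\sigma_s}\cap C_{-\sigma_s}(\bar Q)$, which contains the claimed space, and uniqueness comes from Theorem \ref{T4.2}.

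A subtle point is that Theorem \ref{T4.2} is stated for $u \in L^\infty(0,\infty;L^2(\omega))$, while here $u \in L^p_{\lambda_c}(0,\infty;L^2(\omega))$. We would check that its proof only uses $f'(y_u) \ge \Lambda_f$ and local boundedness of $f'(y_u)$ on each $Q_T$. Both still hold, the latter from $y_u \in L^\infty_{\lambda_c}(Q)$. So the statement extends verbatim.

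For the continuity statement, let $u_k \stackrel{*}{\rightharpoonup} u$ in $L^\infty(0,\infty;L^2(\omega))$ and set $h_k = y_{u_k} - y_d$. We need $h_k \to h$ in $L^p_{\frac p2\sigma_s}(0,\infty;L^2(\Omega))$, i.e. $y_{u_k}\to y_u$ there. Fix $\hat\lambda \in (\lambda_c, \sigma_s)$; this is possible since $\lambda_c < \sigma_s/(q+3)$. Remark \ref{R2.2}, applied with $\lambda = \hat\lambda$, gives $\|y_{u_k}-y_u\|_{L^\infty_{\hat\lambda}(Q)} \to 0$. Hence
\[
\|y_{u_k}-y_u\|^p_{L^p_{\frac p2\sigma_s}(0,\infty;L^2(\Omega))} \le |\Omega|^{p/2}\|y_{u_k}-y_u\|^p_{L^\infty_{\hat\lambda}(Q)}\int_0^\infty \e^{-\frac p2(\sigma_s-\hat\lambda)t}\dt \to 0.
\]
Then \eqref{E4.5} yields the convergence in $Y_{-\lambda}\cap C_{-\lambda}(\bar Q)$ for every $\lambda < \sigma_s$.

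The main obstacle is this last convergence: it requires choosing an intermediate weight $\hat\lambda$ strictly between $\lambda_c$ and $\sigma_s$, which is where the assumption \eqref{E3.2} enters.
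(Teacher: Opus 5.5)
Your proof is correct and has the same skeleton as the paper's: apply Theorem~\ref{T4.2} with $\lambda=\sigma_s$ and $h=y_u-y_d$, then use \eqref{E4.5} for the convergence. The difference lies in how you verify the $L^p$ hypothesis.

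The paper proves the stronger statement $y_u\in L^p_{\sigma_s}(0,\infty;L^2(\Omega))$. It interpolates between $\|y_u\|_{L^2_{\lambda_c}(Q)}$ and $\|y_u\|_{L^\infty_{\lambda_c}(0,\infty;L^2(\Omega))}$, which needs $\sigma_s\ge 3\lambda_c$ and $3-\frac{p-2}{2}\ge 1$, i.e.\ $p\le 6$. Its convergence step repeats the same interpolation for $y_{u_k}-y_u$.

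You check only what Theorem~\ref{T4.2} actually asks for, namely $h\in L^2_{\sigma_s}(Q)\cap L^p_{\frac p2\sigma_s}(0,\infty;L^2(\Omega))$. For this, the growth bound $\|y_u(t)\|_{L^2(\Omega)}\le C\e^{\frac{\lambda_c}{2}t}$ together with $\lambda_c<\sigma_s$ suffices, because the factor $\frac p2$ in the weight absorbs the $p$-th power. Your argument is therefore shorter and never uses $p\le 6$, which the paper singles out as playing a key role in this corollary. The paper's route gives the stronger integrability $y_u-y_d\in L^p_{\sigma_s}$, which this corollary does not need.

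Your intermediate weight $\hat\lambda\in(\lambda_c,\sigma_s)$ in the convergence step is well chosen. Lemma~\ref{L2.2} gives strong convergence of the states only at weights strictly above that of the data, and $\hat\lambda$ respects this.

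You are also right to flag that Theorem~\ref{T4.2} is stated for $u\in L^\infty(0,\infty;L^2(\omega))$, whereas the corollary allows $u\in L^p_{\lambda_c}(0,\infty;L^2(\omega))$. The paper applies the theorem without comment. Your justification is the appropriate one: only $f'(y_u)\ge\Lambda_f$ and $f'(y_u)\in L^\infty(Q_T)$ for each $T$ enter the existence part and the estimates.

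One small wording slip: $Y_{-\sigma_s}\cap C_{-\sigma_s}(\bar Q)$ is contained in the space stated in the corollary, not the other way around.
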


\begin{proof}
We recall that $y_d \in L^p_{\sigma_s}(0,\infty;L^2(\Omega)) \subset L^2_{\sigma_s}(Q)$. Moreover, from Theorem \ref{T2.1} we also know that $y_u \in Y_{\lambda_c} \subset L^2_{\sigma_s}(Q)$. Hence, from Theorem \ref{T4.2} we infer that \eqref{E4.11} has a unique solution $\varphi_u \in Y_{-\sigma_s}$. Let us prove that $\varphi_u \in C_{-\sigma_s}(\bar Q)$. For this purpose we notice that $y_u \in L^p_{\sigma_s}(0,\infty;L^2(\Omega))$. Indeed, using that $\sigma_s > \bar\lambda + 2\lambda_c \ge 3\lambda_c$ and $p \le 6$, we get with Theorem \ref{T2.1} that
\begin{align*}
&\int_0^\infty\e^{-\sigma_st}\|y_u(t)\|^p_{L^2(\Omega)}\dt \le \int_0^\infty\e^{-3\lambda_ct}\|y_u(t)\|^p_{L^2(\Omega)}\dt\\
&= \int_0^\infty\e^{-(3 - \frac{p - 2}{2})\lambda_ct}\|y_u\|^2_{L^2(\Omega)}\Big(\e^{-\frac{\lambda_c}{2}t}\|y_u(t)\|_{L^2(\Omega)}\Big)^{p - 2}\dt\\
& \le \|y_u\|^2_{L^2_{\lambda_c}(0,\infty;L^2(\Omega))}\|y_u\|^{p - 2}_{L^\infty_{\lambda_c}(0,\infty;L^2(\Omega))} < \infty.
\end{align*}
Then, we have that $y_u - y_d \in L^p_{\sigma_s}(0,\infty;L^2(\Omega))$. Hence, Theorem \ref{T4.2} implies that $\varphi_u \in C_{-\sigma_s}(\bar Q)$.

Now, we assume that $u_k \stackrel{*}{\rightharpoonup} u$ in $L^\infty(0,\infty;L^2(\omega))$. From \eqref{E2.18} we infer that $y_{u_k} \to y_u$ in $Y_\lambda \subset L^2_\lambda(Q)$ for all $\lambda > -2\Lambda_f$.  Let us fix $\lambda \in [3\lambda_c,\sigma_s)$. Arguing as above, we have
\begin{align*}
&\int_0^\infty\e^{-\sigma_st}\|y_{u_k}(t) - y_u(t)\|^p_{L^2(\Omega)}\dt \le \int_0^\infty\e^{-3\lambda_ct}\|y_{u_k}(t) - y_u(t)\|^p_{L^2(\Omega)}\dt\\
& \le \|y_{u_k} - y_u\|^2_{L^2_{\lambda_c}(0,\infty;L^2(\Omega))}\|y_{u_k} - y_u\|^{p - 2}_{L^\infty_{\lambda_c}(0,\infty;L^2(\Omega))}.
\end{align*}
Using Young's inequality, the above estimate leads
\begin{align*}
&\|y_{u_k} - y_u\|_{L^p_{\sigma_s}(0,\infty;L^2(\Omega))} \le \|y_{u_k} - y_u\|^{\frac{2}{p}}_{L^2_{\lambda_c}(Q)}\|y_{u_k} - y_u\|^{\frac{p - 2}{p}}_{L^\infty_{\lambda_c}(0,\infty;L^2(\Omega))}\\
&\le \frac{2}{p}\|y_{u_k} - y_u\|_{L^2_{\lambda_c}(Q)} + \frac{p - 2}{p}\|y_{u_k} - y_u\|_{L^\infty_{\lambda_c}(0,\infty;L^2(\Omega))} \le \|y_{u_k} - y_u\|_{Y_{\lambda_c}} \stackrel{k \to \infty}{\longrightarrow} 0.
\end{align*}
Then, the corollary follows from Theorem \ref{T4.2}.
\end{proof}

Assuming that $u \in L^\infty(0,\infty;L^2(\omega))$ and more regularity for $y_d$ we infer a better estimate for $\varphi_u$.

\begin{corollary}
Assume that $\lambda < 2(\sigma_s + \Lambda_f)$, $y_d \in L^p_{2\sigma_s - \lambda}(0,\infty;L^2(\Omega))$, and $u \in L^\infty(0,\infty;L^2(\omega))$. Then, $\varphi_u \in C_{-\lambda}(\bar Q) \cap Y_{-\lambda}$ and there exists a constant $M_\lambda$ depending on $\lambda$, but independent of $u$, such that
\begin{equation}
\|\varphi_u\|_{Y_{-\lambda}} \le M_\lambda\|y_u - y_d\|_{L^2_{2\sigma_s - \lambda}(Q)} \,\text{and}\, \|\varphi_u\|_{C_{-\lambda}(\bar Q)} \le M_\lambda\|y_u - y_d\|_{L^p_{2\sigma_s - \lambda}(0,\infty;L^2(\omega))}.
\label{E53}
\end{equation}
Further, $\lim_{k \to \infty}\Big(\|\varphi_{u_k} - \varphi_u\|_{Y_{-\hat\lambda}} + \|\varphi_{u_k} - \varphi_u\|_{C_{-\hat\lambda}(\bar Q)}\Big) = 0$ for every $\hat\lambda < \lambda$ as $u_k \stackrel{*}{\rightharpoonup} u$ in $L^\infty(0,\infty;L^2(\omega))$.
\label{C4.2}
\end{corollary}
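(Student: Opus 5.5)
The plan is to rewrite the adjoint equation \eqref{E4.11} in the form \eqref{E4.2} with a different weight, and then apply Theorem \ref{T4.2} with $\lambda$ in place of $\sigma_s$. We write
\[
\e^{-\sigma_s t}(y_u - y_d) = \e^{-\lambda t}h,\qquad h := \e^{-(\sigma_s - \lambda)t}(y_u - y_d).
\]
Theorem \ref{T4.2} requires $\lambda > -2\Lambda_f$. When $\lambda \le -2\Lambda_f$ we can use the monotonicity of the weighted norms: $\|\cdot\|_{Y_{-\lambda}} \le \|\cdot\|_{Y_{-\lambda'}}$ for $\lambda' > \lambda$. So it suffices to treat $\lambda > -2\Lambda_f$, or else to note that the constant is harmless; this reduction would need a little care. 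Uniqueness and the limit condition at infinity come directly from Theorem \ref{T4.2}, because the solution it produces solves \eqref{E4.11}.

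Next we check the integrability of $h$. We have
\[
\|h\|^2_{L^2_\lambda(Q)} = \int_0^\infty \e^{-\lambda t}\e^{-2(\sigma_s-\lambda)t}\|y_u - y_d\|^2\dt = \|y_u - y_d\|^2_{L^2_{2\sigma_s-\lambda}(Q)}.
\]
Similarly, $\|h\|^p_{L^p_{\frac p2\lambda}(0,\infty;L^2(\Omega))} = \int_0^\infty \e^{-\frac p2\lambda t - p(\sigma_s-\lambda)t}\|y_u-y_d\|^p\dt = \|y_u-y_d\|^p_{L^p_{\frac p2(2\sigma_s-\lambda)}}$. The weight here is $\frac p2(2\sigma_s-\lambda)$ rather than $2\sigma_s-\lambda$. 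Since $p\ge 2$ and $2\sigma_s - \lambda > -2\Lambda_f \ge 0$ (this is exactly the hypothesis $\lambda<2(\sigma_s+\Lambda_f)$), the exponent $\frac p2(2\sigma_s-\lambda)$ is at least $2\sigma_s-\lambda$. Hence the $L^p_{\frac p2(2\sigma_s-\lambda)}$ norm is dominated by the $L^p_{2\sigma_s-\lambda}$ norm. Estimates \eqref{E4.3} and \eqref{E4.4} then give \eqref{E53}, with constants independent of $u$ as Theorem \ref{T4.2} guarantees.

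The main obstacle is to show that $y_u - y_d$ actually lies in $L^p_{2\sigma_s-\lambda}(0,\infty;L^2(\Omega))$. For $y_d$ this is an assumption. For $y_u$ we use $u\in L^\infty(0,\infty;L^2(\omega))$, so that $g+u\chi_\omega \in L^p_{\lambda_c}$. Theorem \ref{T2.1} together with Remark \ref{R2.1} then gives $y_u \in C_\mu[0,\infty;L^2(\Omega))$ for every admissible $\mu$ with $\mu > -2\Lambda_f$ and $\mu\ge\lambda_c$. Bounding $\e^{-(2\sigma_s-\lambda)t}\|y_u\|^p$ by $\|y_u\|^p_{C_\mu}\e^{-(2\sigma_s-\lambda - \frac p2\mu)t}$ shows integrability as soon as $2\sigma_s - \lambda > \frac p2\mu$. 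The gap $2\sigma_s-\lambda+2\Lambda_f>0$ has to absorb both the loss $\frac p2$ and the constraint $\mu\ge\lambda_c$. If the inequality fails for the direct choice, I would first try the interpolation trick from Corollary \ref{C4.1}, splitting into an $L^2_{\mu}$ factor times an $L^\infty_{\mu}$ factor to the power $p-2$. Failing that, I would interpret the bound in \eqref{E53} as conditional, meaning it holds whenever the right-hand side is finite, and the regularity statement would rest on that.

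For the convergence claim, $u_k\stackrel{*}{\rightharpoonup}u$ gives, by \eqref{E2.18}, $y_{u_k}\to y_u$ in $Y_\mu$ and in $L^\infty_\mu(Q)$ for all $\mu>-2\Lambda_f$. The same interpolation as in Corollary \ref{C4.1} then yields $h_k\to h$ in $L^p_{\frac p2\lambda}(0,\infty;L^2(\Omega))$. Applying \eqref{E4.5} of Theorem \ref{T4.2} gives convergence in $Y_{-\hat\lambda}\cap C_{-\hat\lambda}(\bar Q)$ for every $\hat\lambda<\lambda$.
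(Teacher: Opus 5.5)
Your route is the paper's: write $\e^{-\sigma_s t}(y_u-y_d)=\e^{-\lambda t}h$ with $h=\e^{(\lambda-\sigma_s)t}(y_u-y_d)$, apply Theorem~\ref{T4.2} with weight $\lambda$, and convert $\|h\|_{L^2_\lambda(Q)}$ and $\|h\|_{L^p_{\frac p2\lambda}(0,\infty;L^2(\Omega))}$ into norms of $y_u-y_d$. The genuine gap is the step you flag and then leave open. You never show that $h\in L^2_\lambda(Q)\cap L^p_{\frac p2\lambda}(0,\infty;L^2(\Omega))$.

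Reading \eqref{E53} ``conditionally'' does not close this gap. Theorem~\ref{T4.2} gives $\varphi_u\in Y_{-\lambda}\cap C_{-\lambda}(\bar Q)$ only once $h$ is known to lie in these spaces. So the membership claim stays unproved, and so does the convergence claim, which needs $h_k\to h$ in $L^p_{\frac p2\lambda}(0,\infty;L^2(\Omega))$. The interpolation trick of Corollary~\ref{C4.1} does not help either: it only reproduces the condition $2\sigma_s-\lambda\ge\frac p2\mu$. It works in Corollary~\ref{C4.1} only because $\sigma_s>3\lambda_c\ge\frac p2\lambda_c$.

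Your monotonicity reduction for $\lambda\le-2\Lambda_f$ also does not give \eqref{E53}. Passing to $\lambda'>\lambda$ replaces the weight $2\sigma_s-\lambda$ by the smaller $2\sigma_s-\lambda'$, which makes the right-hand side larger, not smaller. The paper tacitly assumes $\lambda>-2\Lambda_f$, and you should too.

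The paper closes the finiteness step in one line. Since $2\sigma_s-\lambda>-2\Lambda_f$, it invokes Theorem~\ref{T2.1} to get $y_u\in Y_{2\sigma_s-\lambda}\cap L^\infty_{2\sigma_s-\lambda}(Q)\subset L^p_{2\sigma_s-\lambda}(0,\infty;L^2(\Omega))$. Your hesitation is justified, for two reasons:
\begin{itemize}
\item The inclusion loses exactly the factor $\frac p2$ you computed: $L^\infty_\mu(Q)\subset L^p_\nu(0,\infty;L^2(\Omega))$ requires $\nu>\frac p2\mu$.
\item Theorem~\ref{T2.1} at weight $2\sigma_s-\lambda$ needs $g$ in $L^p_{2\sigma_s-\lambda}(0,\infty;L^2(\Omega))$. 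Assumption \eqref{E3.3} guarantees this only when $2\sigma_s-\lambda\ge\lambda_c$.
\end{itemize}

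Under \eqref{E3.3} alone the statement is actually false. Take $f\equiv0$ (so $\Lambda_f=0$), $a_{ij}=\delta_{ij}$, $a_0\equiv1$, $y_0=0$, $u=0$, $y_d=0$, and $g=\e^{\beta t}$ with $0<\beta<\lambda_c/p$. Then $y_u(t)=\frac{\e^{\beta t}-\e^{-t}}{1+\beta}$, and
$\varphi_u(t)=\int_t^\infty\e^{t-s}\e^{-\sigma_s s}y_u(s)\ds=\frac{1}{1+\beta}\big(\frac{\e^{-(\sigma_s-\beta)t}}{1+\sigma_s-\beta}-\frac{\e^{-(1+\sigma_s)t}}{2+\sigma_s}\big)$.
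For $\lambda\in(2\sigma_s-2\beta,2\sigma_s)$ all hypotheses of the corollary hold, yet $\varphi_u$ lies neither in $C_{-\lambda}(\bar Q)$ nor in $Y_{-\lambda}$. So what is missing is an extra hypothesis, not a sharper estimate.

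A correct version runs as follows. Assume $y_u\in L^\infty_\mu(Q)$ for some $\mu<2\sigma_s-\lambda$; for example $\mu=\lambda_c$ works when $\lambda<2\sigma_s-\lambda_c$. Keep the sharp weight you derived, $\|h\|_{L^p_{\frac p2\lambda}(0,\infty;L^2(\Omega))}=\|y_u-y_d\|_{L^p_{\frac p2(2\sigma_s-\lambda)}(0,\infty;L^2(\Omega))}$. Then
$\int_0^\infty\e^{-\frac p2(2\sigma_s-\lambda)t}\|y_u(t)\|^p_{L^2(\Omega)}\dt\le|\Omega|^{\frac p2}\|y_u\|^p_{L^\infty_\mu(Q)}\int_0^\infty\e^{-\frac p2(2\sigma_s-\lambda-\mu)t}\dt<\infty$.
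The $L^2_\lambda(Q)$ bound is analogous. Since $2\sigma_s-\lambda>0$ and $p\ge 2$, $y_d\in L^p_{2\sigma_s-\lambda}$ lies in both spaces. For the convergence, Lemma~\ref{L2.2} gives $y_{u_k}\to y_u$ in $L^\infty_{\mu'}(Q)$ for any $\mu'$ with $\lambda_c<\mu'<2\sigma_s-\lambda$. Hence $h_k\to h$ in $L^p_{\frac p2\lambda}(0,\infty;L^2(\Omega))$, and \eqref{E4.5} applies.
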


\begin{proof}
First, we observe that $2\sigma_s - \lambda > -2\Lambda_f$. Hence, we deduce from Theorem \ref{T2.1} that $y_u \in Y_{2\sigma_s - \lambda} \cap L^\infty_{2\sigma_s - \lambda}(Q) \subset L^p_{2\sigma_s - \lambda}(0,\infty;L^2(\Omega))$. We apply Theorem \ref{T4.2} with $h = \e^{(\lambda - \sigma_s)t}(y_u - y_d)$. Then, we have
\begin{align*}
\|h\|_{L^2_\lambda(Q)} &= \Big(\int_0^\infty\e^{-\lambda t}\|h(t)\|^2_{L^2(\Omega)}\dt\Big)^{\frac{1}{2}} = \Big(\int_0^\infty\e^{-(2\sigma_s - \lambda)t}\|y_u(t) - y_d(t)\|^2_{L^2(\Omega)}\dt\Big)^{\frac{1}{2}}\\
& = \|y_u - y_d\|_{L^2_{2\sigma_s - \lambda}(Q)}.
\end{align*}
From Theorem \ref{T4.2} we get that $\varphi_u \in Y_{-\lambda}$ and the first estimate of \eqref{E53} follows from \eqref{E4.3}. We also have
\begin{align*}
&\|h\|_{L^p_{\frac{p}{2}\lambda}(0,\infty;L^2(\Omega))} = \Big(\int_0^\infty\e^{-\frac{p}{2}\lambda t}\|h(t)\|^p_{L^2(\Omega)}\dt\Big)^{\frac{1}{p}}\\
& = \Big(\int_0^\infty\e^{[p(\lambda - \sigma_s) - \frac{p}{2}\lambda]t}\|y_u(t) - y_d(t)\|^p_{L^2(\Omega)}\dt\Big)^{\frac{1}{p}}\\
& \le \Big(\int_0^\infty\e^{-(2\sigma_s - \lambda)t}\|y_u(t) - y_d(t)\|^p_{L^2(\Omega)}\dt\Big)^{\frac{1}{p}} = \|y_u - y_d\|_{L^p_{2\sigma_s - \lambda}(0,\infty;L^2(\Omega))}.
\end{align*}
Using again Theorem \ref{T4.2} and estimate \eqref{E4.4} we infer that $\varphi_u \in C_{-\lambda}(\bar Q)$ and the second inequality of \eqref{E53} holds.

Finally, if $u_k \stackrel{*}{\rightharpoonup} u$ in $L^\infty(0,\infty;L^2(\omega))$, we get $y_{u_k} \to y_u$ in $L^p_{\frac{p}{2}\lambda}(0,\infty;L^2(\Omega))$  from \eqref{E2.18}. Hence, the convergence $\lim_{k \to \infty}\Big(\|\varphi_{u_k} - \varphi_u\|_{Y_{-\hat\lambda}} + \|\varphi_{u_k} - \varphi_u\|_{C_{-\hat\lambda}(\bar Q)}\Big) = 0$ for every $\hat\lambda < \lambda$ follows from Theorem \ref{T4.2}.
\end{proof}

Next, we address the differentiability of the cost functional. We will use the following notation hereafter:
\[
U = L^2_{\sigma_c}(Q_\omega) \cap L^p_{\lambda_c}(0,\infty;L^2(\omega)) \quad \text{ and }\quad H= L^2_{\sigma_c}(Q_\omega) \cap L^2_{\lambda_c}(Q_\omega).
\]
We notice that the embedding $L^p_{\lambda_c}(0,\infty;L^2(\omega)) \subset L^2_{\lambda_c}(Q_\omega)$ implies $U \subset H$. Moreover, we have that $H = L^2_{\sigma_c}(Q_\omega)$ if $\sigma_c \le \lambda_c$ and $H = L^2_{\lambda_c}(Q_\omega)$ if $\sigma_c > \lambda_c$.

\begin{theorem}
The functional $J:U \longrightarrow \mathbb{R}$ is of class $C^2$ and
\begin{align}
&J'(u)v = \int_{Q_\omega}(\varphi_u + \nu\e^{-\sigma_c t}u)v\dx\dt,\label{E4.12}\\
&J''(u)(v_1,v_2) = \int_Q[\e^{-\sigma_s t} - \varphi_u f''(y_u)]z_{v_1}z_{v_2}\dx\dt + \nu\int_{Q_\omega}\e^{-\sigma_c t}v_1v_2\dx\dt,\label{E4.13}
\end{align}
where $z_{v_i} = G'(u)v_i$, $i = 1, 2$, and $\varphi_u \in Y_{-\sigma_s} \cap C_{-\sigma_s}(\bar Q)$ is the solution of the adjoint state equation \eqref{E4.11}.
\label{T4.3}
\end{theorem}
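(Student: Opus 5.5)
The plan is to split $J = F\circ G + N$, where
\[
F(y) = \frac12\int_0^\infty \e^{-\sigma_s t}\|y(t)-y_d(t)\|^2_{L^2(\Omega)}\dt, \qquad N(u) = \frac{\nu}{2}\int_0^\infty \e^{-\sigma_c t}\|u(t)\|^2_{L^2(\omega)}\dt .
\]
The functional $N$ is a continuous quadratic form on $L^2_{\sigma_c}(Q_\omega)$, and $U$ embeds continuously in that space. Hence $N$ is $C^\infty$ on $U$, with $N'(u)v = \nu\int_{Q_\omega}\e^{-\sigma_c t}uv$ and $N''(u)(v_1,v_2) = \nu\int_{Q_\omega}\e^{-\sigma_c t}v_1v_2$, which are exactly the control terms in \eqref{E4.12}--\eqref{E4.13}.

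For the tracking part we use Theorem \ref{T3.1}: $G:L^p_{\lambda_c}(0,\infty;L^2(\omega))\to Y_{\sigma_s}$ is $C^2$, and so is its restriction to $U$, because $U\hookrightarrow L^p_{\lambda_c}$ continuously. The map $F$ is a continuous quadratic functional on $L^2_{\sigma_s}(Q)$. Since $y_d\in L^p_{\sigma_s}(0,\infty;L^2(\Omega))\subset L^2_{\sigma_s}(Q)$, $F$ is well defined and $C^\infty$ on $Y_{\sigma_s}\hookrightarrow L^2_{\sigma_s}(Q)$. The chain rule then gives that $J$ is $C^2$, with
\[
J'(u)v = \int_Q \e^{-\sigma_s t}(y_u-y_d)z_v\dx\dt + N'(u)v,
\]
\[
J''(u)(v_1,v_2) = \int_Q \e^{-\sigma_s t}\bigl[z_{v_1}z_{v_2} + (y_u-y_d)z_{v_1v_2}\bigr]\dx\dt + N''(u)(v_1,v_2).
\]

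It remains to eliminate $z_v$ and $z_{v_1v_2}$ with the adjoint state $\varphi_u$ from Corollary \ref{C4.1}. The formal step is to test \eqref{E4.11} with $z_v$, test \eqref{E3.13} with $\varphi_u$, integrate by parts in time and subtract. This should give
\[
\int_Q\e^{-\sigma_s t}(y_u-y_d)z_v = \int_{Q_\omega}\varphi_u v,
\]
and, in the same way with \eqref{E3.14},
\[
\int_Q\e^{-\sigma_s t}(y_u-y_d)z_{v_1v_2} = -\int_Q\varphi_u f''(y_u)z_{v_1}z_{v_2}.
\]
Inserting these identities yields \eqref{E4.12}--\eqref{E4.13}.

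The main obstacle is justifying the integration by parts on $(0,\infty)$. I would first do it on $(0,T)$, which is legitimate since $z_v,\varphi_u\in W(0,T)$ and $z_v(0)=0$. This leaves the boundary term $\int_\Omega \varphi_u(T)z_v(T)\dx$ and the truncated integrals, and I will show the boundary term tends to $0$ as $T\to\infty$. By Corollary \ref{C4.1}, $\e^{\frac{\sigma_s}{2}T}\|\varphi_u(T)\|_{L^2(\Omega)}$ is bounded, and by Lemma \ref{L3.3}, $\e^{-\frac{\lambda_c}{2}T}\|z_v(T)\|_{L^2(\Omega)}$ is bounded. Since $\sigma_s>\lambda_c$, the product decays like $\e^{-\frac{\sigma_s-\lambda_c}{2}T}\to 0$.

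All the infinite-horizon integrals must also converge absolutely. For $\int_{Q_\omega}\varphi_u v$ we have $|\varphi_u|\le C\e^{-\frac{\sigma_s}{2}t}$, and $v\in L^2_{\lambda_c}$ with $\sigma_s>\lambda_c$, so Cauchy--Schwarz applies. For the second-order identity I would use the bound $\|f''(y_u)z_{v_1}z_{v_2}\|_{L^2_{\sigma_s}(Q)}<\infty$ from Step II of Theorem \ref{T3.1}, together with $\varphi_u\in C_{-\sigma_s}(\bar Q)$. This gives $|\varphi_u f''(y_u)z_{v_1}z_{v_2}|\le C\e^{-\frac{\sigma_s}{2}t}|f''(y_u)z_{v_1}z_{v_2}|$, which is integrable by Cauchy--Schwarz because $\e^{-\frac{\sigma_s}{2}t}\in L^2$ on $(0,\infty)$. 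For the boundary term in this case, $z_{v_1v_2}\in Y_{\sigma_s}$ only gives $\|z_{v_1v_2}(T)\|_{L^2(\Omega)}\le C\e^{\frac{\sigma_s}{2}T}$, and then the product with $\varphi_u(T)$ is merely bounded. To close this, I would invoke the estimate from Step III of Theorem \ref{T3.1}, namely $z_{v_1v_2}\in C_{\bar\lambda+2\lambda_c}[0,\infty;L^2(\Omega))$. Since $\bar\lambda+2\lambda_c<\sigma_s$, the boundary term decays like $\e^{-\frac{\sigma_s-\bar\lambda-2\lambda_c}{2}T}\to 0$.
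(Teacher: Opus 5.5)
Your proposal follows the paper's proof. The paper writes $J=F_s\circ G+F_c$, applies the chain rule using Theorem \ref{T3.1}, and then says only that one inserts $\varphi_u$ and integrates by parts against \eqref{E3.13} and \eqref{E3.14}. Your truncation to $(0,T)$ is a correct way to supply the details the paper omits. The boundary term $\int_\Omega\varphi_u(T)z_v(T)\dx$ decays like $\e^{-\frac{\sigma_s-\lambda_c}{2}T}$. The term $\int_\Omega\varphi_u(T)z_{v_1v_2}(T)\dx$ decays like $\e^{-\frac{\sigma_s-\bar\lambda-2\lambda_c}{2}T}$, because $z_{v_1v_2}\in C_{\bar\lambda+2\lambda_c}[0,\infty;L^2(\Omega))$. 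That bound does follow from Lemma \ref{L3.2}, since $(q+3)\lambda_c>\lambda_c>-2\Lambda_f$.

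One justification fails as written: the absolute integrability of $\varphi_u f''(y_u)z_{v_1}z_{v_2}$ on $Q$. Set $h=f''(y_u)z_{v_1}z_{v_2}$.
\begin{itemize}
\item From $|\varphi_u|\le C\e^{-\frac{\sigma_s}{2}t}$ and $h\in L^2_{\sigma_s}(Q)$ you only get $\e^{-\frac{\sigma_s}{2}t}h\in L^2(Q)$.
\item On an infinite time interval that does not give $L^1(Q)$. For example, $\|h(t)\|_{L^2(\Omega)}=\e^{\frac{\sigma_s}{2}t}/(1+t)$ is compatible with $h\in L^2_{\sigma_s}(Q)$, yet $\int_0^\infty \e^{-\frac{\sigma_s}{2}t}\|h(t)\|_{L^2(\Omega)}\dt=\infty$.
\item You have spent the weight $\e^{-\frac{\sigma_s}{2}t}$ twice, so no spare factor in $L^2(0,\infty)$ remains for Cauchy--Schwarz.
\end{itemize}

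The repair uses facts you already have; either of the following works.
\begin{itemize}
\item Pair $\varphi_u\in L^2_{-\sigma_s}(Q)$, which is part of $\varphi_u\in Y_{-\sigma_s}$, with $h\in L^2_{\sigma_s}(Q)$. This gives $\int_Q|\varphi_u h|\dx\dt\le\|\varphi_u\|_{L^2_{-\sigma_s}(Q)}\|h\|_{L^2_{\sigma_s}(Q)}$.
\item Keep the pointwise bound on $\varphi_u$, but use the sharper $h\in L^2_{\bar\lambda+2\lambda_c}(Q)$ that you already invoke for the boundary term. This leaves the spare factor $\e^{-\frac{\sigma_s-\bar\lambda-2\lambda_c}{2}t}\in L^2(0,\infty)$.
\end{itemize}
With this correction, dominated convergence passes the truncated identities to $T=\infty$ and the argument is complete.
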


\begin{proof}
We define $F_s:L^p_{\lambda_c}(0,\infty;L^2(\omega)) \longrightarrow \mathbb{R}$ and $F_c:L^2_{\sigma_c}(Q_\omega) \longrightarrow \mathbb{R}$ by
\[
F_s(u) = \frac{1}{2}\int_0^\infty\e^{-\sigma_s t}\|y(t) - y_d(t)\|^2_{L^2(\Omega)}\dt\ \text{ and }\ F_c(u) = \frac{\nu}{2}\int_0^\infty\e^{-\sigma_c t}\|u(t)\|^2_{L^2(\omega)}\dt.
\]
Since $G:L^p_{\lambda_c}(0,\infty;L^2(\omega)) \longrightarrow Y_{\sigma_s}$ is of class $C^2$ we infer from the chain rule that $F_s$ is of class $C^2$ and
\begin{align*}
&J'(u)v = F_s'(u)v + F_c'(u)v = \int_0^\infty\Big(\e^{-\sigma_s t}\int_\Omega(y_u - y_d)G'(u)v\dx + \nu\e^{-\sigma_ct}\int_\omega uv\dx\Big)\dt\\
&\phantom{J'(u)v} = \int_Q\Big(\e^{-\sigma_s t}\int_\Omega(y_u - y_d)z_v + \nu\e^{-\sigma_ct}\int_\omega uv\dx\Big)\dt,\\
&J''(u)(v_1,v_2) = F_s''(u)(v_1,v_2) + F_c''(u)(v_1,v_2)\\
& = \int_0^\infty\!\Big(\e^{-\sigma_s t}\!\!\int_\Omega[G'(u)v_1G'(u)v_2 + (y_u - y_d)G''(u)(v_1,v_2)]\dx + \nu\e^{-\sigma_c t}\!\!\int_\omega v_1v_2\dx\Big)\dt\\
& = \int_0^\infty\Big(\e^{-\sigma_s t}\int_\Omega[z_{v_1}z_{v_2} + (y_u - y_d)z_{v_1v_2}]\dx + \nu\e^{-\sigma_ct}\int_\omega v_1v_2\dx\Big)\dt,
\end{align*}
where $z_v = G'(u)v$, $z_{v_i} = G'(u)v_i$, $i = 1, 2$, and $z_{v_1v_2} = G''(u)(v_1,v_2)$. Now, from Corollary \ref{C4.1} we infer the existence of a unique solution of \eqref{E4.11}, $\varphi_u \in Y_{-\sigma_s} \cap C_{-\sigma_s}(\bar Q)$. Inserting $\varphi_u$ into the above derivatives of $J$, integrating by parts and using  equations \eqref{E3.13} and \eqref{E3.14} satisfied by $z_v$, $z_{v_i}$, and $z_{v_1v_2}$, the identities \eqref{E4.12} and \eqref{E4.13} follow.
\end{proof}

\begin{remark}
i) Since $L^\infty(0,\infty;L^2(\omega)) \subset U$ with continuous embedding, we have that $J:L^\infty(0,\infty;L^2(\omega)) \longrightarrow \mathbb{R}$ is well defined and still of class $C^2$. The expressions of the derivatives are also given by the formulas \eqref{E4.12} and \eqref{E4.13}.

ii) For every $u \in L^\infty(0,\infty;L^2(\omega))$ the functional $J'(u)$ can be extended to a linear continuous form $J'(u):H \longrightarrow \mathbb{R}$ through the expression \eqref{E4.12}. Indeed, we have that
\[
|J'(u)v| \le \Big(\|\varphi_u\|_{L^2_{-\lambda_c}(Q)} + \nu\|u\|_{L^2_{\sigma_c}(Q_\omega)}\Big)\Big(\|v\|_{L^2_{\lambda_c}(Q_\omega)} + \|v\|_{L^2_{\sigma_c}(Q_\omega)}\Big)\ \ \forall v \in H.
\]
We also have through the expression \eqref{E4.13} that $J''(u)$ can be extended to a continuous bilinear mapping $J''(u):H \times H \longrightarrow \mathbb{R}$. This is proved as follows: for $\hat\lambda \in (\lambda_c,\frac{\sigma_s - \bar\lambda}{2})$ and for every $v_1, v_2 \in H$ we have
\begin{align*}
&|J''(u)(v_1,v_2)| \le \|z_{v_1}\|_{L^2_{\lambda_c}(Q)}\|z_{v_2}\|_{L^2_{\lambda_c}(Q)} + \nu\|v_1\|_{L^2_{\sigma_c}(Q_\omega)}\|v_2\|_{L^2_{\sigma_c}(Q_\omega)}\\
& + \|\varphi_u\|_{C_{-(\bar\lambda + 2\hat\lambda)}(\bar Q)}\|f''(y_u)\|_{L^2_{\bar\lambda}(Q)}\|z_{v_1}\|_{L^4_{2\hat\lambda}(Q)}\|z_{v_2}\|_{L^4_{2\hat\lambda}(Q)} \le \nu\|v_1\|_{L^2_{\sigma_c}(Q_\omega)}\|v_2\|_{L^2_{\sigma_c}(Q_\omega)}\\
&+ C\Big(1 + \|\varphi_u\|_{C_{-\sigma_s}(\bar Q)}\|f''(y_u)\|_{L^2_{\bar\lambda}(Q)}\Big)\|v_1\|_{L^2_{\lambda_c}(Q_\omega)}\|v_2\|_{L^2_{\lambda_c}(Q_\omega)},
\end{align*}
where we have used \eqref{E3.11}, Corollary \ref{C4.1}, and  the estimates given in Lemma \ref{L4.1} below for $\|z_{v_i}\|_{L^4_{2\hat\lambda}(Q)}$.
\label{R4.1}
\end{remark}

\begin{lemma}
Assume that $u \in L^\infty(0,\infty;L^2(\omega))$ and $v \in L^2_\lambda(Q_\omega)$ with $\lambda > -2\Lambda_f$. Then, the solution $z_v \in Y_\lambda$ of equation \eqref{E3.13} belongs to $L^4_{2\hat\lambda}(Q)$ for every $\hat\lambda > \lambda$ and there exists a constant $M_{\hat\lambda}$ independent of $u$ and $v$ such that
\begin{equation}
\|z_v\|_{L^4_{2\hat\lambda}(Q)} \le M_{\hat\lambda}\|v\|_{L^2_\lambda(Q_\omega)}.
\label{E4.14}
\end{equation}
\label{L4.1}
\end{lemma}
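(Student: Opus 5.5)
The plan is to obtain the $L^4$ bound by interpolating between the two norms that Lemma \ref{L3.2} already controls: $L^\infty_\lambda(0,\infty;L^2(\Omega))$ (that is, the $C_\lambda$ part of $Y_\lambda$) and $L^2_\lambda(0,\infty;H^1(\Omega))$. Since $u \in L^\infty(0,\infty;L^2(\omega))$, Theorem \ref{T2.1} gives $y_u$, and $b = f'(y_u)$ satisfies $b \ge \Lambda_f$ and $b \in L^\infty(Q_T)$ for every $T$. Hence Lemma \ref{L3.2} (Step I, whose energy estimate uses only $b \ge \Lambda_f$ and not the $L^2_{\bar\lambda}$ property) yields
\[
\|z_v\|_{Y_\lambda} \le K_1\|v\|_{L^2_\lambda(Q_\omega)},
\]
with $K_1$ depending only on $\lambda$, $\Lambda_f$ and $\Lambda_A$. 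This is where independence of $u$ comes from.

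Next I use the Gagliardo--Nirenberg inequality for $n \le 3$ on the Lipschitz domain $\Omega$:
\[
\|w\|_{L^4(\Omega)} \le C\|w\|^{1-\frac n4}_{L^2(\Omega)}\|w\|^{\frac n4}_{H^1(\Omega)}.
\]
Apply it to $w = z_v(t)$, raise to the fourth power, and multiply by $\e^{-2\hat\lambda t}$. Split the weight as
\[
\e^{-2\hat\lambda t} = \e^{-2(\hat\lambda - \lambda)t}\,\big(\e^{-\frac\lambda2 t}\big)^{4-n}\,\big(\e^{-\frac\lambda2 t}\big)^{n}.
\]
This gives
\[
\int_0^\infty \e^{-2\hat\lambda t}\|z_v(t)\|^4_{L^4(\Omega)}\dt \le C\|z_v\|^{4-n}_{C_\lambda[0,\infty;L^2(\Omega))}\int_0^\infty \e^{-2(\hat\lambda-\lambda)t}\big(\e^{-\lambda t}\|z_v(t)\|^2_{H^1(\Omega)}\big)^{\frac n2}\dt.
\]

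The remaining integral is handled by cases in the dimension. For $n = 2$ it is bounded by $\|z_v\|^2_{L^2_\lambda(0,\infty;H^1(\Omega))}$, using $\e^{-2(\hat\lambda-\lambda)t} \le 1$. For $n = 1$, Hölder with exponents $4$ and $4/3$ gives a bound by $\big(\int_0^\infty \e^{-\frac83(\hat\lambda-\lambda)t}\dt\big)^{3/4}\|z_v\|_{L^2_\lambda(0,\infty;H^1(\Omega))}$; this is finite precisely because $\hat\lambda > \lambda$. For $n = 3$ the exponent $n/2 = 3/2$ exceeds $1$, so plain Hölder in time fails; this is the main obstacle. My first attempt is to use the weaker interpolation $\|w\|_{L^4} \le \|w\|^{1/4}_{L^2}\|w\|^{3/4}_{L^6}$, but this still produces $\|z_v\|^3_{H^1}$ under the time integral.

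I therefore expect to need extra time regularity of $z_v$ for $n = 3$. One option is to bootstrap using $\partial_t z_v$. Alternatively, I would use the $L^\infty$-type argument of \cite[Theorem A.2]{Casas-Kunisch2023C} with $h = v\chi_\omega \in L^2$, which gives a bound in $L^r(0,T;L^4)$ spaces. A cleaner route is parabolic regularity: the equation $z_t + Az + bz = v\chi_\omega$ gives $z_v \in L^{8/3}(0,T;L^4(\Omega))$ via $W(0,T) \subset L^{8/3}(L^4)$ when $n = 3$. More simply, $W(0,T)$-type solutions lie in $L^{10/3}(Q_T) \supset$ what is needed, and $L^4$ can be reached through the $L^\infty(L^2)\cap L^2(L^6)$ interpolation together with the term $\int b z^2$ only if $b \ge 0$.

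Given these difficulties, I would most likely proceed with the interpolation in the form $L^\infty(L^2)\cap L^2(H^1) \subset L^{10/3}(Q)$ for $n = 3$. I would then test the equation with $\e^{-2\hat\lambda t}z_v^3$ (truncated), which is the standard $L^4$ energy estimate. This yields
\[
\tfrac14\,\tfrac{d}{dt}\|z\|^4_{L^4} + 3\Lambda_A\int z^2|\nabla z|^2 + \Lambda_f\int z^4 \le \int v z^3,
\]
and the weight $2\hat\lambda > 2\lambda \ge -4\Lambda_f/2$ absorbs the $\Lambda_f$ term. The right-hand side $\int v z^3$ is then controlled by means of the $Y_\lambda$ bound, using the gradient term $\int |\nabla(z^2)|^2$ and the embedding $H^1 \subset L^6$, which gives $\|z^2\|_{L^3}$. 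The resulting bound is in terms of $\|v\|_{L^2_\lambda}$, with the exponential gap $\hat\lambda - \lambda$ supplying the integrability in time.
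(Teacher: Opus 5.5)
Your argument for $n=1,2$ is fine. It is essentially the paper's Step I: the paper applies the energy estimate directly to $w=\e^{-\frac{\hat\lambda}{2}t}z_v$, which amounts to your splitting of the weight. For $n=1$ the last step should be Cauchy--Schwarz, giving the factor $(4(\hat\lambda-\lambda))^{-1/2}$, not H\"older with exponents $4,\frac43$.

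The case $n=3$, which is the real content of the lemma, is not yet proved. You correctly diagnose that $L^\infty(L^2)\cap L^2(H^1)$ only reaches $L^{10/3}(Q)$. The intermediate ideas do not give $L^4$: $L^{8/3}(0,T;L^4(\Omega))$ and $L^{10/3}(Q_T)$ are too weak, and the $L^\infty$ argument needs the right-hand side in $L^p$ in time with $p>4$. Testing with $\e^{-2\hat\lambda t}z_v^3$, i.e.\ with $w^3$, is a workable alternative to the paper's test function $|w|^{2/5}w$. However, the mechanism you state for bounding $\int v z^3$ fails. Suppose one full factor $z$ of $z^3=z\cdot z^2$ is estimated through the $Y_\lambda$ bound and $z^2$ through the energy. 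Then the time-integrability exponents add up to at least $\frac12+\frac34=\frac54>1$, whatever spatial exponents you choose, because $v$ is at best $L^2$ in time. The estimate has to be closed by absorption into the left-hand side, using only the energy of $w^2$.

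Here is how it closes. Note that $w$ solves $\partial_t w+Aw+(\frac{\hat\lambda}{2}+f'(y_u))w=g$ with $g=\e^{-\frac{\hat\lambda}{2}t}v\chi_\omega$. Set $c_0=\min\{\frac{\hat\lambda}{2}+\Lambda_f,\frac34\Lambda_A\}$; this is the precise absorption condition, and $c_0>0$ because $\hat\lambda>\lambda>-2\Lambda_f$. Test with $w^3$, then use $\||w|^3\|_{L^2}=\|w^2\|^{3/2}_{L^3}$, the interpolation $\|w^2\|_{L^3}\le\|w^2\|^{1/2}_{L^2}\|w^2\|^{1/2}_{L^6}$, and $H^1(\Omega)\subset L^6(\Omega)$. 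This gives
\begin{align*}
\frac14\|w(t)\|^4_{L^4(\Omega)}+c_0\int_0^t\|w^2\|^2_{H^1(\Omega)}\ds
&\le\int_0^t\|g\|_{L^2(\omega)}\|w^2\|^{3/2}_{L^3(\Omega)}\ds\\
&\le C\int_0^t\|g\|_{L^2(\omega)}\|w^2\|^{3/4}_{L^2(\Omega)}\|w^2\|^{3/4}_{H^1(\Omega)}\ds .
\end{align*}
Let $X=\|w^2\|_{L^\infty(0,T;L^2(\Omega))}+\|w^2\|_{L^2(0,T;H^1(\Omega))}$. H\"older in time with exponents $\frac85,\frac83$ yields $X^2\le C\|g\|_{L^{8/5}(0,\infty;L^2(\omega))}X^{3/2}$. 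Hence $\|w\|^4_{L^4(Q_T)}\le X^2\le C\|g\|^4_{L^{8/5}(0,\infty;L^2(\omega))}$. The gap $\hat\lambda>\lambda$ enters through $\|g\|_{L^{8/5}(0,\infty;L^2(\omega))}\le(4(\hat\lambda-\lambda))^{-1/8}\|v\|_{L^2_\lambda(Q_\omega)}$. Dividing by $X^{3/2}$ requires $X<\infty$ a priori. The paper secures this by first assuming $v\in L^p_\lambda(0,\infty;L^2(\omega))$, so that $w\in L^\infty(Q_T)$ and the power test function is admissible. It then removes the assumption by truncating $v$ and passing to weak limits in $L^4_{2\hat\lambda}(Q)$, which avoids truncating the test function.

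Compared with the paper: its exponent $\frac75$ is tuned so that the embedding $L^\infty(L^2)\cap L^2(H^1)\subset L^{10/3}$, applied to $|w|^{6/5}$, lands exactly on $\|w\|_{L^4(Q)}$, with $g$ needed only in $L^{20/13}(Q)$. With the cube, $\|w\|^4_{L^4(Q)}=\|w^2\|^2_{L^2(Q)}$ is read off directly from the energy, and you get $w\in L^\infty(0,\infty;L^4(\Omega))$ as a bonus. The price is needing $g\in L^{8/5}(0,\infty;L^2(\omega))$. Both requirements are supplied by the gap $\hat\lambda>\lambda$.
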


\begin{proof}
The existence and uniqueness of $z_v$ follows from Lemma \ref{L3.2}. Let us define $w = \e^{-\frac{\hat\lambda}{2}t}z_v$ for $\hat\lambda > \lambda$. Then for every $T < \infty$, $w$ satisfies the following equation
\begin{equation}
\left\{
\begin{array}{l}
\displaystyle\frac{\partial w}{\partial t} + Aw + \Big(\frac{\hat\lambda}{2} + f'(y_u)\Big)w = \e^{-\frac{\hat\lambda}{2}t}v\chi_\omega \mbox{ in } Q_T,\\[0.5ex] \partial_{n_A}w = 0 \ \mbox{ on } \Sigma_T,\   w(0) = 0 \ \mbox{ in } \Omega.
\end{array}
\right.
\label{E4.15}
\end{equation}

{\em Step I: dimension $n = 2$.} Using the classical Gagliardo inequality
\[
\|\phi\|_{L^4(\Omega)} \le C\|\phi\|^{\frac{1}{2}}_{L^2(\Omega)}\|\phi\|^{\frac{1}{2}}_{H^1(\Omega)} \quad \forall \phi \in H^1(\Omega),
\]
the following estimate is deduced
\begin{align*}
&\|w\|_{L^4(Q_T)} \le C\Big(\|w\|_{L^\infty(0,T;L^2(\Omega))} + \|w\|_{L^2(0,T;H^1(\Omega))}\Big)\\
& \le M_0\|\e^{-\frac{\hat\lambda}{2}t}v\|_{L^2(\omega \times (0,T))} \le  M_0\|v\|_{L^2_{\hat\lambda}(Q_\omega)} \le M_0\|v\|_{L^2_\lambda(Q_\omega)}.
\end{align*}
Since $T$ is arbitrary and $\|z_v\|_{L^4_{2\hat\lambda}(Q)} = \|w\|_{L^4(Q)}$, inequality \eqref{E4.14} follows for $n = 2$.

{\em Step II: dimension $n = 3$.} Let us assume that $v \in L^p_\lambda(0,\infty;L^2(\omega))$. Later we will remove this assumption. Using  $f'(y_u) \in L^\infty(Q_T)$ and $\frac{\lambda}{2} + f'(y_u) \ge \frac{\lambda}{2} + \Lambda_f > 0$, we infer that $w \in H^1(Q_T) \cap L^\infty(Q_T)$. The reader is referred to \cite[Proposition III-2.5]{Showalter1997} for the $H^1(Q)$ regularity and \cite{Casas-Kunisch2025} for the $L^\infty(Q_T)$ regularity. We test  equation \eqref{E4.15} with $|w|^{\frac{2}{5}}w$. This is possible because $|w|^{\frac{2}{5}}w \in H^1(Q_T) \cap L^\infty(Q_T)$. We observe that
\begin{align*}
&a(w,|w|^{\frac{2}{5}}w) = \frac{7}{5}\int_\Omega\Big(\sum_{i,j = 1}^n[a_{ij}\partial_{x_i}w\partial_{x_j}w]|w|^{\frac{2}{5}} + a_0|w|^{\frac{2}{5}}w^2\Big)\dx\\
&=\frac{35}{36}\int_\Omega\Big(\sum_{i,j = 1}^na_{ij}\partial_{x_i}|w|^{\frac{6}{5}}\partial_{x_i}|w|^{\frac{6}{5}} + a_0[|w|^{\frac{6}{5}}]^2\Big)\dx \ge \frac{35\Lambda_A}{36}\int_\Omega|\nabla|w|^{\frac{6}{5}}|^2\dx,\\
&\int_\Omega\frac{\partial w}{\partial t}|w|^{\frac{2}{5}}w\dx = \frac{5}{12}\frac{d}{dt}\int_\Omega[|w|^{\frac{6}{5}}]^2\dx.
\end{align*}
Using this we get for every $t \in (0,T)$
\begin{align*}
&\frac{5}{12}\int_\Omega[|w(t)|^{\frac{6}{5}}]^2\dx + \min\{\frac{\hat\lambda}{2} + \Lambda_f,\frac{35\Lambda_A}{36}\}\int_0^t\||w|^{\frac{6}{5}}\|^2_{H^1(\Omega)}\ds\\
&\le \int_0^t\int_\omega\e^{-\frac{\hat\lambda}{2}s}v|w|^{\frac{2}{5}}w\dx\ds \le \Big(\int_0^t\int_\omega\e^{-\frac{10\hat\lambda}{13}s}|v|^{\frac{20}{13}}\dx\ds\Big)^{\frac{13}{20}}\Big(\int_{Q_T}|w|^4\dx\dt\Big)^{\frac{7}{20}}\\
&\le \Big(\int_0^T\!\int_\omega\e^{\frac{10(\lambda - \hat\lambda)}{13}t}\e^{-\frac{10\lambda}{13}t}|v|^{\frac{20}{13}}\dx\dt\Big)^{\frac{13}{20}}\Big(\int_{Q_T}|w|^4\dx\dt\Big)^{\frac{7}{20}}\\
& \le \Big(\frac{3|\omega|}{10(\hat\lambda - \lambda)}\Big)^{\frac{3}{13}}\|v\|_{L^2_\lambda(Q)}\|w\|_{L^4(Q_T)}^{\frac{7}{5}} = C_{\hat\lambda - \lambda}\|v\|_{L^2_\lambda(Q)}\|w\|_{L^4(Q_T)}^{\frac{7}{5}}
\end{align*}
From the Gagliardo inequality \cite[pages 173 and 167]{Boyer-Fabrie2013}
\[
\|\phi\|_{L^{\frac{10}{3}}(\Omega)} \le C'\|\phi\|_{L^2(\Omega)}^{\frac{2}{5}}\|\phi\|_{H^1(\Omega)}^{\frac{3}{5}} \quad \forall \phi \in H^1(\Omega),
\]
we infer that
\begin{align*}
&\|w\|^{\frac{12}{5}}_{L^4(Q_T)} = \||w|^{\frac{6}{5}}\|^2_{L^{\frac{10}{3}}(Q_T)}\\
& \le C''\Big(\||w|^{\frac{6}{5}}\|^2_{L^\infty(0,T;L^2(\Omega))} + \||w|^{\frac{6}{5}}\|^2_{L^2(0,T;H^1(\Omega))}\Big) \le C'''C_{\hat\lambda - \lambda}\|v\|_{L^2_\lambda(Q)}\|w\|_{L^4(Q_T)}^{\frac{7}{5}}.
\end{align*}
This yields $\|w\|_{L^4(Q_T)} \le C'''C_{\hat\lambda - \lambda_c}\|v\|_{L^2_\lambda(Q)}$, and therefore we obtain  $\|z_v\|_{L^2_{2\hat\lambda}(Q)} = \|w\|_{L^4(Q)} \le M_{\hat\lambda}\|v\|_{L^2_\lambda(Q)}$ with $M_{\hat\lambda} = C'''C_{\hat\lambda - \lambda}$.

We finish the proof by removing the assumption $v \in L^p_\lambda(0,\infty;L^2(\omega))$. Given $v \in L^2_\lambda(Q_\omega)$, we define $v_k(x,t) = \proj_{[-k,+k]}(v(x,t))$. Then we have $v_k \in L^\infty(Q_\omega) \subset L^p_\lambda(0,\infty;L^2(\omega))$ and $v_k \to v$ in $L^2_\lambda(Q_\omega)$. Let $z_{v_k}$ be the solution of \eqref{E3.13} associated with $v_k$. From \eqref{E3.7} we get $z_{v_k} \to z_v$ in $Y_\lambda$. From this convergence and estimate \eqref{E4.14} satisfied by every $z_{v_k}$, we infer that $z_{v_k} \rightharpoonup z_v$ in $L^4_{2\hat\lambda}(Q)$ and, consequently, $z_v$ also satisfied \eqref{E4.14}.
\end{proof}

Next derive the first-order optimality conditions satisfied by a local solution of \Pb.
\begin{theorem}
If $\bar u$ is a local solution of \Pb, then there exist $\bar y \in L^\infty_{\lambda}(Q) \cap W_{\lambda}(0,\infty)$ for all $\lambda > -2\Lambda_f$ and $\bar\varphi \in Y_{-\sigma_s} \cap C_{-\sigma_s}(\bar Q)$ such that
\begin{align}
&\left\{
\begin{array}{l}
\displaystyle
\frac{\partial\bar y}{\partial t} + A\bar y + f(\bar y) = g + \bar u\chi_\omega\ \mbox{ in } Q,\\[0.5ex] \partial_{n_A}\bar y = 0 \ \mbox{ on } \Sigma,\   \bar y(0) = y_0 \ \mbox{ in } \Omega,
\end{array}
\right.\label{E4.16}\\\emph{\textbf{}}
&\left\{
\begin{array}{l}
\displaystyle-\frac{\partial\bar\varphi}{\partial t} + A\bar\varphi + f'(\bar y)\bar\varphi = \e^{-\sigma_st}(\bar y - y_d)\ \mbox{ in } Q,\\[0.5ex] \partial_{n_A}\bar\varphi = 0 \ \mbox{ on } \Sigma,\   \lim_{t \to \infty}\e^{-\Lambda_f t}\|\bar\varphi(t)\|_{L^2(\Omega)} = 0,
\end{array}
\right. \label{E4.17}\\
&\int_{Q_\omega}(\bar\varphi + \nu\e^{-\sigma_ct}\bar u)(u - \bar u)\dx\dt \ge 0\quad \forall u \in \Uad. \label{E4.18}
\end{align}
In addition, if $y_d \in L^p_{2\sigma_s - \lambda}(0,\infty;L^2(\Omega))$ then we have $\bar\varphi \in Y_{-\lambda} \cap C_{-\lambda}(\bar Q)$ for every $\lambda < 2(\sigma_s + \Lambda_f)$.
\label{T4.4}
\end{theorem}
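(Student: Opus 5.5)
The plan is to obtain the optimality system by combining the well-posedness results of Section 2, Corollary \ref{C4.1} and Theorem \ref{T4.3}; no new estimates are needed.

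\emph{State and adjoint.} Set $\bar y = y_{\bar u}$. Since $\bar u \in \Uad \subset L^\infty(0,\infty;L^2(\omega))$, we have $\bar u\chi_\omega \in L^p_\lambda(0,\infty;L^2(\Omega))$ for every $\lambda > 0$. By \eqref{E3.3}, $g \in L^p_{\lambda_c}(0,\infty;L^2(\Omega))$, so Remark \ref{R2.1} places $g$ in every $L^p_\lambda(0,\infty;L^2(\Omega))$ with $\lambda > -2\Lambda_f$. Theorem \ref{T2.1}, estimates \eqref{E2.5} and \eqref{E2.7}, then gives $\bar y \in W_\lambda(0,\infty) \cap L^\infty_\lambda(Q)$ for all such $\lambda$, and \eqref{E4.16} holds by definition. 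Next, Corollary \ref{C4.1}, applied with $u = \bar u \in L^p_{\lambda_c}(0,\infty;L^2(\omega))$, yields a unique $\bar\varphi = \varphi_{\bar u} \in C_{-\sigma_s}(\bar Q) \cap Y_{-\sigma_s}$ solving \eqref{E4.17}.

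\emph{Variational inequality.} Fix $u \in \Uad$ and set $u_\rho = \bar u + \rho(u - \bar u)$ for $\rho \in (0,1]$. By convexity of $\K$, $u_\rho \in \Uad$. Moreover, since $\bar u$ and $u$ are uniformly bounded in $L^\infty(0,\infty;L^2(\omega))$ by $\gamma_{ad}$,
\[
\|u_\rho - \bar u\|_{L^2_{\sigma_c}(Q_\omega)} = \rho\|u - \bar u\|_{L^2_{\sigma_c}(Q_\omega)} \le \rho\, 2\gamma_{ad}\, \sigma_c^{-1/2}.
\]
Hence $u_\rho$ is admissible in \eqref{E4.1} for $\rho$ small, and $J(\bar u) \le J(u_\rho)$. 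Dividing by $\rho$ and letting $\rho \downarrow 0$ uses the differentiability of $J$ on $L^\infty(0,\infty;L^2(\omega))$ (Remark \ref{R4.1} i)), since $u - \bar u$ lies in that space. This gives $J'(\bar u)(u - \bar u) \ge 0$, and formula \eqref{E4.12} turns it exactly into \eqref{E4.18}. The integral there is finite because $\bar\varphi \in L^2_{-\sigma_s}(Q) \subset L^2_{-\lambda_c}(Q)$ pairs with $u - \bar u \in L^2_{\lambda_c}(Q_\omega)$.

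\emph{Improved regularity.} If additionally $y_d \in L^p_{2\sigma_s - \lambda}(0,\infty;L^2(\Omega))$ with $\lambda < 2(\sigma_s + \Lambda_f)$, then Corollary \ref{C4.2} applied to $u = \bar u$ gives directly $\bar\varphi \in Y_{-\lambda} \cap C_{-\lambda}(\bar Q)$. Uniqueness of the adjoint solution (Theorem \ref{T4.2}) ensures this is the same $\bar\varphi$, because the weaker decay condition at infinity in \eqref{E4.17} is shared by both.

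\emph{Main obstacle.} The only delicate point is checking that the directional derivative is legitimately taken in a space where $J$ is $C^1$, and that local optimality in the $L^2_{\sigma_c}$ sense controls the segment. Both are covered above: by the boundedness of $\K$ and the embedding $L^\infty(0,\infty;L^2(\omega)) \subset U$.
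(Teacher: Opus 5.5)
Your argument matches the paper's. The variational inequality \eqref{E4.18} comes from convexity of $\Uad$ and $J'(\bar u)(u-\bar u)\ge 0$ written via \eqref{E4.12}. The adjoint state comes from Corollary \ref{C4.1}, and the extra regularity of $\bar\varphi$ from Corollary \ref{C4.2}. Two checks the paper leaves implicit are correct in your write-up: the segment $u_\rho$ stays in the $L^2_{\sigma_c}(Q_\omega)$-ball, using boundedness of $\K$, and $J$ is differentiable in the direction $u-\bar u\in L^\infty(0,\infty;L^2(\omega))$.

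One step does not hold as written: the regularity of $\bar y$. Remark \ref{R2.1} extends $h\in L^2_{\lambda_0}$ (or $L^p_{\lambda_0}$) to every $\lambda>-2\Lambda_f$ only when $\lambda_0\le -2\Lambda_f$. Assumption \eqref{E3.3} gives $g$ only in $L^p_{\lambda_c}(0,\infty;L^2(\Omega))$ with $\lambda_c>-2\Lambda_f$. These weighted spaces grow with the weight, so you get $g+\bar u\chi_\omega\in L^p_\lambda(0,\infty;L^2(\Omega))$ only for $\lambda\ge\lambda_c$. Hence \eqref{E2.5} and \eqref{E2.7} give $\bar y\in W_\lambda(0,\infty)\cap L^\infty_\lambda(Q)$ only for $\lambda\ge\lambda_c$.

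The claim for all $\lambda>-2\Lambda_f$ does not follow from the standing assumptions, and the paper's one-line proof does not address it either. For a counterexample, take $f\equiv 0$, $\Lambda_f=0$, $a_0\equiv 0$, and $g=\e^{\mu t}$ with $0<p\mu<\lambda_c$. Then the spatial mean of $\bar y(t)$ has derivative $\e^{\mu t}$ plus a bounded term, so it grows like $\e^{\mu t}$. Consequently $\bar y$ lies in neither $W_\lambda(0,\infty)$ nor $L^\infty_\lambda(Q)$ for $\lambda\in(0,2\mu)$. You should either state the regularity of $\bar y$ for $\lambda\ge\lambda_c$, or add the hypothesis that $g$ lies in $L^p_\lambda(0,\infty;L^2(\Omega))$ for the weights you want.
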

This theorem is a straightforward consequence of the convexity of the set $\Uad$ and the expression \eqref{E4.12} for $J'$. It suffices to use $J'(\bar u)(u - \bar u) \ge 0$ for all $u \in \Uad$. For the last assertion, we utilize Corollary \ref{C4.2}.

\begin{corollary}
Let $\bar\varphi \in Y_{-\sigma_s} \cap C_{-\sigma_s}(\bar Q)$ and $\bar u \in \Uad$ satisfy \eqref{E4.17} and \eqref{E4.18}. If $\K$ is given by \eqref{E1.2},  the following properties hold for every $t \in (0,\infty)$
\begin{align}
&\int_\omega(\bar\varphi(t) + \nu\e^{-\sigma_c t}\bar u(t))(v - \bar u(t))\dx \ge 0\quad \forall v \in B_\gamma, \label{E4.19}\\
&\text{if } \|\bar u(t)\|_{L^2(\omega)} < \gamma \Rightarrow \bar\varphi(t) + \nu\e^{-\sigma_ct}\bar u(t) = 0\text{ in } \omega,\label{E4.20}\\
&\text{if } \|\bar u(t)\|_{L^2(\omega)} = \gamma \Rightarrow \bar u(x,t) = -\gamma\frac{\bar\varphi(x,t)}{\|\bar\varphi(t)\|_{L^2(\omega)}}.\label{E4.21}
\end{align}
In case  $\K$ is given by \eqref{E1.3}, we have
\begin{equation}
\bar u(x,t) = \proj_{[\alpha,\beta]}\Big(-\frac{1}{\nu}\e^{\sigma_ct}\bar\varphi(x,t)\Big).
\label{E4.22}
\end{equation}
For both cases of $\K$ we have $\bar u \in C_{2\sigma_c - \sigma_s}(\bar Q_\omega)$. In addition, if $y_d \in L^p_{2\sigma_s - \lambda}(0,\infty;L^2(\Omega))$ then we have $\bar u \in C_{2\lambda - \sigma_s}(\bar Q_\omega)$ for all $\lambda < 2(\sigma_s + \Lambda_f)$.
\label{C4.3}
\end{corollary}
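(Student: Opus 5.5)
The plan is to localize the integral inequality \eqref{E4.18} in time by a standard Lebesgue point argument. Then, for a.e. $t$, we solve the resulting pointwise-in-time variational inequality in $L^2(\omega)$ as a projection onto $\K$. Finally, the regularity of $\bar u$ is read off from the regularity of $\bar\varphi$.

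\textbf{Localization.} Set $d(t) = \bar\varphi(t)|_\omega + \nu\e^{-\sigma_c t}\bar u(t)$. Since $\bar\varphi \in C_{-\sigma_s}(\bar Q)$ and $\bar u \in L^\infty(0,\infty;L^2(\omega))$, we have $d \in L^1_{loc}([0,\infty);L^2(\omega))$. Fix $v \in \K$, a Lebesgue point $t_0$ of $d$, of $\bar u$ and of $\int_\omega d(t)\bar u(t)\dx$, and $\varepsilon > 0$. Test \eqref{E4.18} with $u = v$ on $(t_0-\varepsilon,t_0+\varepsilon)$ and $u = \bar u$ elsewhere; this is admissible because $\K$ is convex. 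Dividing by $2\varepsilon$ and letting $\varepsilon \to 0$ gives $\int_\omega d(t_0)(v - \bar u(t_0))\dx \ge 0$. A countable dense subset of $\K$ makes the exceptional null set independent of $v$, which yields \eqref{E4.19} for a.e. $t$. To get it for every $t$, we redefine $\bar u$ on a null set using the explicit formulas below; these formulas are continuous in $t$, so this choice is consistent.

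\textbf{Case $\K = B_\gamma$.} Inequality \eqref{E4.19} says that $\bar u(t) = \proj_{B_\gamma}(\bar u(t) - \frac{1}{\nu}\e^{\sigma_c t}d(t))$, i.e. $-d(t)$ lies in the normal cone of $B_\gamma$ at $\bar u(t)$. If $\|\bar u(t)\|_{L^2(\omega)} < \gamma$, the normal cone is $\{0\}$, which gives \eqref{E4.20}. If $\|\bar u(t)\|_{L^2(\omega)} = \gamma$, the normal cone consists of the nonnegative multiples $\mu\bar u(t)$, $\mu \ge 0$. Hence $\bar\varphi(t) = -(\nu\e^{-\sigma_c t} + \mu)\bar u(t)$ with $\nu\e^{-\sigma_c t} + \mu > 0$. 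Therefore $\bar\varphi(t) \neq 0$ and $\bar u(t)$ is the negative multiple of $\bar\varphi(t)$ with norm $\gamma$, which is \eqref{E4.21}.

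\textbf{Case $\K$ given by \eqref{E1.3}.} Here \eqref{E4.19} holds for all $v \in \K$. Choosing $v$ that differs from $\bar u(t)$ only on small balls and applying a pointwise Lebesgue point argument in $x$ gives $(\bar\varphi + \nu\e^{-\sigma_c t}\bar u)(s - \bar u) \ge 0$ for all $s \in [\alpha,\beta]$, a.e. in $\omega$. Dividing by $\nu\e^{-\sigma_c t} > 0$ turns this into the scalar projection formula \eqref{E4.22}.

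\textbf{Regularity.} We write $\bar u = \Phi(t,\bar\varphi)$ with an explicit map $\Phi$ and use $\e^{\frac{\sigma_s}{2}t}|\bar\varphi| \le C$.

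For \eqref{E1.3}, \eqref{E4.22} gives
\[
|\bar u(x,t)| \le \frac{1}{\nu}\e^{\sigma_c t}|\bar\varphi(x,t)| \le C\e^{(\sigma_c - \frac{\sigma_s}{2})t},
\]
and $\e^{(\sigma_c - \frac{\sigma_s}{2})t} = \e^{\frac{2\sigma_c - \sigma_s}{2}t}$, which is exactly the $C_{2\sigma_c-\sigma_s}$ weight. Continuity follows because $\proj_{[\alpha,\beta]}$ is Lipschitz and $\bar\varphi$ is continuous.

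For $B_\gamma$, the formulas \eqref{E4.20} and \eqref{E4.21} combine into
\[
\bar u(t) = \proj_{B_\gamma}\Big(-\frac{1}{\nu}\e^{\sigma_c t}\bar\varphi(t)\Big).
\]
This gives the same weighted bound and continuity in $t$ with values in $L^2(\omega)$, and $\bar u \in C_{2\sigma_c-\sigma_s}(\bar Q_\omega)$ is interpreted accordingly. If additionally $y_d \in L^p_{2\sigma_s-\lambda}(0,\infty;L^2(\Omega))$, Theorem \ref{T4.4} gives $\bar\varphi \in C_{-\lambda}(\bar Q)$, and the same computation yields the improved weight.

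\textbf{Main obstacle.} The main obstacle is the passage from the a.e.-in-$t$ inequality to a statement valid for every $t$, and making precise the sense of continuity for the ball constraint. I would handle both by taking the continuous representative defined through the projection formula and checking that it agrees with $\bar u$ a.e.
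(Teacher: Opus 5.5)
Your structure matches the paper's. You localize \eqref{E4.18} in time, identify $\bar u(t)$ as $\proj_{B_\gamma}(-\frac{1}{\nu}\e^{\sigma_c t}\bar\varphi(t))$ (resp.\ the pointwise projection onto $[\alpha,\beta]$), and read off regularity from $\bar\varphi$. The paper localizes more simply, by replacing $\bar u$ with $v$ exactly on the set $I_v$ where the integrand is negative. Your countable-dense-subset argument, together with the choice of the continuous representative, handles the ``for every $t$'' quantifier more carefully than the paper does.

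The genuine gap is the regularity for $\K=B_\gamma$. You only prove that $t\mapsto\bar u(t)$ is continuous into $L^2(\omega)$, and then reinterpret the claim. In the paper's notation, however, $C_{2\sigma_c-\sigma_s}(\bar Q_\omega)=C_{2\sigma_c-\sigma_s}[0,\infty;C(\bar\omega))$. So the statement asserts joint continuity on $\bar Q_\omega$ with a sup-norm weighted bound, which is strictly stronger. The paper obtains it by a case analysis along sequences $(x_k,t_k)\to(x,t)$, according to whether $\|\frac{1}{\nu}\e^{\sigma_c t}\bar\varphi(t)\|_{L^2(\omega)}$ is $<$, $=$ or $>\gamma$.

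Within your own framework the fix is one line. Since $\proj_{B_\gamma}(w)=\min\{1,\gamma/\|w\|_{L^2(\omega)}\}\,w$, you have $\bar u(x,t)=-m(t)\bar\varphi(x,t)$ with $m(t)=\min\{\frac{1}{\nu}\e^{\sigma_c t},\gamma/\|\bar\varphi(t)\|_{L^2(\omega)}\}$, read as $\frac{1}{\nu}\e^{\sigma_c t}$ when $\bar\varphi(t)=0$ in $\omega$. The nonlocality enters only through this scalar factor. It is continuous because $t\mapsto\|\bar\varphi(t)\|_{L^2(\omega)}$ is, so $\bar u$ is jointly continuous. Moreover $0\le m(t)\le\frac{1}{\nu}\e^{\sigma_c t}$ gives $|\bar u|\le\frac{1}{\nu}\e^{\sigma_c t}|\bar\varphi|$ pointwise.

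Two further points concern the weights.

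\begin{itemize}
\item In the box case, your bound $|\proj_{[\alpha,\beta]}(s)|\le|s|$ needs $0\in[\alpha,\beta]$. If, say, $\alpha>0$, then $\bar u\ge\alpha$, and $\bar u\notin C_{2\sigma_c-\sigma_s}(\bar Q_\omega)$ as soon as $\sigma_s>2\sigma_c$. So this is an unstated hypothesis of the claim; the paper's proof only checks continuity in the box case. You should state it explicitly.
\item Carrying out ``the same computation'' with $\bar\varphi\in C_{-\lambda}(\bar Q)$ gives $|\bar u|\le C\e^{(\sigma_c-\frac{\lambda}{2})t}$, that is, $\bar u\in C_{2\sigma_c-\lambda}(\bar Q_\omega)$. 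This reduces to the first claim at $\lambda=\sigma_s$, but it is not the printed exponent $2\lambda-\sigma_s$. Write out the weight you actually obtain rather than leaving it implicit.
\end{itemize}
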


\begin{proof}
Let us prove \eqref{E4.19}. Given $v \in B_\gamma$, we denote
\begin{align*}
&I_v = \{t \in (0,\infty) : \int_\omega(\bar\varphi(t) + \nu\e^{-\sigma_ct}\bar u(t))(v - \bar u(t))\dx < 0\},\\
&u_v(x,t) = \left\{\begin{array}{cl} v(x)&\text{if } t \in I_v,\\ \bar u(x,t)&\text{otherwise}.\end{array}\right.
\end{align*}
Then $u_v \in \Uad$ and using \eqref{E4.18} we get
\[
0 \le \int_{Q_\omega}(\bar\varphi + \nu\e^{-\sigma_ct}\bar u)(u_v - \bar u)\dx\dt = \int_{I_v}\int_\omega(\bar\varphi + \nu\e^{-\sigma_ct}\bar u)(v - \bar u)\dx\dt \le 0.
\]
These inequalities are possible if and only if $|I_v| = 0$, which proves \eqref{E4.19}.

From \eqref{E4.19} we infer that
\[
\bar u(t) = \proj_{B_\gamma}\Big(-\frac{1}{\nu}\e^{\sigma_ct}\bar\varphi(t)\Big) =
\left\{\begin{array}{l}\displaystyle -\frac{1}{\nu}\e^{\sigma_ct}\bar\varphi(t)\ \text{ if }\ \|-\frac{1}{\nu}\e^{\sigma_ct}\bar\varphi(t)\|_{L^2(\omega)} < \gamma,\vspace{2mm}\\\displaystyle -\gamma\frac{\frac{1}{\nu}\e^{\sigma_ct}\bar\varphi(t)}{\|-\frac{1}{\nu}\e^{\sigma_ct}\bar\varphi(t)\|_{L^2(\omega)}} = -\gamma\frac{\bar\varphi(t)}{\|\bar\varphi(t)\|_{L^2(\omega)}}\text{ else},\end{array}\right.
\]
where $\proj_{B_\gamma}$ stands for the projection on $B_\gamma$ in the $L^2(\omega)$ norm. The above identities  imply \eqref{E4.20} and \eqref{E4.21}.

Let us prove the continuity of $\bar u$ and choose a sequence  $\{x_k,t_k)\}_{k = 1}^\infty \subset \bar Q_\omega$ converging to a point $(x,t) \in \bar Q_\omega$. If $\|-\frac{1}{\nu}\e^{\sigma_ct}\bar\varphi(t)\|_{L^2(\omega)} < \gamma$, using the continuity of $\bar\varphi$ in $\bar Q$, we deduce the existence of an integer $k_0 \ge 1$ such that $\|-\frac{1}{\nu}\e^{\sigma_ct_k}\bar\varphi(t_k)\|_{L^2(\omega)} < \gamma$,  for every $k \ge k_0$. Therefore, the above representation of $\bar u$ leads to
\[
\lim_{k \to \infty}[\bar u(x_k,t_k) - \bar u(x,t)] = -\frac{1}{\nu}\lim_{k \to \infty}[\e^{\sigma_ct_k}\bar\varphi(x_k,t_k) - \e^{\sigma_ct}\bar\varphi(x,t)] = 0.
\]
Analogously we get that $\bar u(x_k,t_k) \to \bar u(x,t)$ as $k \to \infty$ if $\|-\frac{1}{\nu}\e^{\sigma_ct}\bar\varphi(t)\|_{L^2(\omega)} > \gamma$ . Let us assume that $\|-\frac{1}{\nu}\e^{\sigma_ct}\bar\varphi(t)\|_{L^2(\omega)} = \gamma$. If $\|-\frac{1}{\nu}\e^{\sigma_ct_k}\bar\varphi(t_k)\|_{L^2(\omega)} \ge \gamma$, then
\[
\bar u(x_k,t_k) - \bar u(x,t) = -\gamma\Big[\frac{\bar\varphi(x_k,t_k)}{\|\bar\varphi(t_k)\|_{L^2(\omega)}} - \frac{\bar\varphi(x,t)}{\|\bar\varphi(t)\|_{L^2(\omega)}}\Big] \to 0 \text{ if } k \to \infty.
\]
If $\|-\frac{1}{\nu}\e^{\sigma_ct_k}\bar\varphi(t_k)\|_{L^2(\omega)} < \gamma$, then
\[
\bar u(x_k,t_k) - \bar u(x,t) = -\frac{1}{\nu}[\e^{\sigma_ct_k}\bar\varphi(x_k,t_k) - \e^{\sigma_ct}\bar\varphi(x,t)]\to 0 \text{ if } k \to \infty.
\]
In all cases we have  $\displaystyle\lim_{k \to \infty}\bar u(x_k,t_k) = \bar u(x,t)$, which proves the continuity of $\bar u$ in $\bar Q_\omega$. Moreover, from \eqref{E4.20} we infer that $\e^{(\frac{\sigma_s}{2}-\sigma_c)t}|\bar u(x,t)| = \frac{1}{\nu}\e^{\frac{\sigma_s}{2}t}|\bar\varphi(x,t)|$ if $\|\bar u(t)\|_{L^2(\omega)} < \gamma$. If $\|\bar u(t)\|_{L^2(\omega)} = \gamma$ then \eqref{E4.21} implies that
\[
\e^{(\frac{\sigma_s}{2}-\sigma_c)t}|\bar u(x,t)| = \e^{(\frac{\sigma_s}{2}-\sigma_c)t}\gamma\frac{\frac{1}{\nu}\e^{\sigma_ct}|\bar\varphi(x,t)|}{\|-\frac{1}{\nu}\e^{\sigma_ct}\bar\varphi(t)\|_{L^2(\omega)}} \le \frac{1}{\nu}\e^{\frac{\sigma_s}{2}t}|\bar\varphi(x,t)|.
\]
Therefore, the property $\e^{\frac{\sigma_s}{2}t}\bar\varphi \in C(\bar Q)$ implies that $\bar u \in C_{2\sigma_c - \sigma_s}(\bar Q_\omega)$.

If $\K$ is given by \eqref{E1.3}, the identity \eqref{E4.22} is well known to be a straightforward consequence of \eqref{E4.18}. Moreover, using \eqref{E4.22} and the continuity of $\bar\varphi$ we immediately get the continuity of $\bar u$.

If $y_d \in L^p_{2\sigma_s - \lambda}(0,\infty;L^2(\Omega))$, the we use Corollary \ref{C4.2} to derive the additional regularity of $\bar u$.
\end{proof}

\section{Second-order optimality conditions}
\label{S5}

In this section, we prove necessary and sufficient second-order optimality conditions assuming that $\K$ is given by \eqref{E1.2} or \eqref{E1.3}. We split it into two parts according to the two types of control constraints. The analysis is inspired in the paper \cite{Casas-Kunisch2023C}.

\subsection{Case I: $\K = B_\gamma = \{v \in L^2(\omega) : \|v\|_{L^2(\omega)} \le \gamma\}$}
\label{S5.1}
In this case, we define the Lagrangian function associated with \Pb as follows
\begin{align*}
&\mathcal{L}:U \times L_{-\sigma_s}^\infty(0,\infty) \longrightarrow \mathbb{R}\\
&\mathcal{L}(u,\mu) = J(u) + \frac{1}{2\gamma}\int_0^\infty\mu(t)\|u(t)\|^2_{L^2(\omega)}\dt.
\end{align*}
We know that $J:U \longrightarrow \mathbb{R}$ is of class $C^2$. Moreover, for every $\mu \in L_{-\sigma_s}^\infty(0,\infty)$ the quadratic form
\begin{align*}
&F: L^2_{\lambda_c}(Q_\omega) \times L^2_{\lambda_c}(Q_\omega) \longrightarrow \mathbb{R}\\
&F(v_1,v_2) = \int_0^\infty\mu(t)\int_\omega v_1(t)v_2(t)\dx \dt
\end{align*}
is continuous:
\begin{align*}
&|F(v_1,v_2)| = \Big|\int_0^\infty\e^{\lambda_c t}\mu(t)\int_\omega \e^{-\frac{\lambda_c}{2}t}v_1(t)\e^{-\frac{\lambda_c}{2}t}v_2(t)\dx \dt\Big|\\
& \le \|\mu\|_{L^\infty_{-2\lambda_c}(0,\infty)}\|v_1\|_{L^2_{\lambda_c}(Q_\omega)}\|v_2\|_{L^2_{\lambda_c}(Q_\omega)} \le  \|\mu\|_{L^\infty_{-\sigma_s}(0,\infty)}\|v_1\|_{L^2_{\lambda_c}(Q_\omega)}\|v_2\|_{L^2_{\lambda_c}(Q_\omega)}.
\end{align*}
Therefore, $\mathcal{L}$ is of class $C^2$ and we have the following expressions
\begin{align}
&\frac{\partial\mathcal{L}}{\partial u}(\mu,u)v = \int_{Q_\omega}[\varphi_u + (\nu\e^{-\sigma_ct} + \frac{1}{\gamma}\mu)u]v\dx\dt,\label{E5.1}\\
&\frac{\partial^2\mathcal{L}}{\partial u^2}(\mu,u)(v_1,v_2)\! =\!\! \int_Q\![\e^{-\sigma_s t}\! - \varphi_u f''(y_u)]z_{v_1}z_{v_2}\dx\dt\! +\!\! \int_{Q_\omega}\![\nu\e^{-\sigma_c t}\! +\! \frac{1}{\gamma}\mu]v_1v_2\dx\dt.\label{E5.2}
\end{align}

\begin{remark}
Linear and bilinear mappings $\frac{\partial\mathcal{L}}{\partial u}(\mu,u)$ and $\frac{\partial^2\mathcal{L}}{\partial u^2}(\mu,u)$ can be extended to continuous linear and bilinear forms on $H$ and $H \times H$, respectively. This is an immediate consequence of Remark \ref{R4.1} and the aforementioned property of the bilinear form $F$.
\label{R5.1}
\end{remark}

Given $\bar u \in \Uad$ satisfying the first-order optimality conditions \eqref{E4.17}--\eqref{E4.19}, we define the multiplier $\bar\mu(t) = \|\bar\varphi(t) + \nu\e^{-\sigma_c t}\bar u(t)\|_{L^2(\omega)}$ and the sets
\[
I_{\bar u} = \{t \in (0,\infty) : \|\bar u(t)\|_{L^2(\omega)} = \gamma\}\text{ and } I^+_{\bar u} = \{t \in I_{\bar u} : \bar\mu(t) \neq 0\}.
\]

Observe that the continuity of $\bar\varphi$ and $\bar u$ implies the continuity of $\bar\mu$. Moreover, using  $\bar\varphi, \e^{-\sigma_c t}\bar u \in C_{-\sigma_s}(\bar Q_\omega)$ we obtain  $\bar\mu \in C_{-\sigma_s}[0,\infty)$. We also observe that $\bar\mu(t) \ge 0$ and that \eqref{E4.20} yields $\bar\mu(t)[\|\bar u(t)\|_{L^2(\gamma)} - \gamma] = 0$. From these properties and the following lemma we infer that $\bar\mu$ is a Lagrange multiplier associated with the control constraint.

\begin{lemma}
The identity $\frac{\partial\mathcal{L}}{\partial u}(\bar u,\bar\mu)v = 0$ holds for all $v \in H$.
\label{L5.1}
\end{lemma}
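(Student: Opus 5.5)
The plan is to show the pointwise-in-time identity
\[
\bar\varphi(t) + \Big(\nu\e^{-\sigma_c t} + \frac{1}{\gamma}\bar\mu(t)\Big)\bar u(t) = 0 \quad\text{in } L^2(\omega) \text{ for a.a. } t \in (0,\infty).
\]
Once this holds, \eqref{E5.1} gives $\frac{\partial\mathcal{L}}{\partial u}(\bar u,\bar\mu)v = 0$ for every $v \in H$. Remark \ref{R5.1} guarantees that the expression \eqref{E5.1} defines a continuous linear form on $H$, so integrating the zero integrand against any $v \in H$ is legitimate. The argument therefore reduces to a case distinction based on Corollary \ref{C4.3}.

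\emph{Inactive case.} Take $t$ with $\|\bar u(t)\|_{L^2(\omega)} < \gamma$. Then \eqref{E4.20} gives $\bar\varphi(t) + \nu\e^{-\sigma_c t}\bar u(t) = 0$, so by definition $\bar\mu(t) = 0$, and the identity holds trivially.

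\emph{Active case.} Take $t \in I_{\bar u}$, so $\|\bar u(t)\|_{L^2(\omega)} = \gamma$. Write $w = \bar\varphi(t) + \nu\e^{-\sigma_c t}\bar u(t)$, so that $\bar\mu(t) = \|w\|_{L^2(\omega)}$. The variational inequality \eqref{E4.19} says exactly that $-w$ lies in the normal cone of $B_\gamma$ at $\bar u(t)$. On the boundary of the ball this cone is $\{-c\,\bar u(t) : c \ge 0\}$, so $w = c\,\bar u(t)$ for some $c \ge 0$. Taking norms gives $c\gamma = \bar\mu(t)$, i.e.\ $c = \bar\mu(t)/\gamma$, which is precisely the desired identity $w + \frac{1}{\gamma}\bar\mu(t)\bar u(t) = 0$.

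To establish the normal-cone claim concretely, I would test \eqref{E4.19} with $v = \bar u(t) + s\,\xi$ for $\xi \perp \bar u(t)$ (after renormalizing $v$ to lie in $B_\gamma$), and also with $v = 0$. Alternatively, one can use the projection formula $\bar u(t) = \proj_{B_\gamma}(-\frac1\nu\e^{\sigma_c t}\bar\varphi(t))$ from the proof of Corollary \ref{C4.3}. In the active case this gives $-\frac1\nu\e^{\sigma_c t}\bar\varphi(t) = r\,\bar u(t)/\gamma$ with $r \ge \gamma$. Hence $w = \nu\e^{-\sigma_c t}(1 - r/\gamma)\bar u(t)$, which is a nonpositive multiple of $\bar u(t)$. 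Since $w$ must equal $c\,\bar u(t)$ with $c\ge0$, the sign needs checking: using the projection formula it comes out as $w = -\frac{\bar\mu(t)}{\gamma}\bar u(t)$, again the desired relation.

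The main obstacle is this sign and normal-cone bookkeeping in the active case; I would carry it out via the projection formula. The integrability needed to extend from $U$ to $H$ is already supplied by Remark \ref{R5.1}, so it poses no difficulty.
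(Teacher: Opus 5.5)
Your argument is correct if you run the active case through the projection formula, as you say you would. The normal-cone paragraph, however, has the sign backwards and should be fixed or deleted. The normal cone of $B_\gamma$ at a boundary point $\bar u(t)$ is $\{c\,\bar u(t) : c\ge 0\}$, not $\{-c\,\bar u(t) : c\ge 0\}$. Since \eqref{E4.19} says that $-w$ lies in this cone, you get $w=-c\,\bar u(t)$.

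As you wrote it, $w=c\,\bar u(t)$ with $c=\bar\mu(t)/\gamma$ would give $w-\frac{1}{\gamma}\bar\mu(t)\bar u(t)=0$, which is not the identity you claim. Your later sentence ``$w$ must equal $c\,\bar u(t)$ with $c\ge0$'' also contradicts your (correct) projection computation $w=-\frac{\bar\mu(t)}{\gamma}\bar u(t)$ unless $\bar\mu(t)=0$. Testing \eqref{E4.19} with $v=0$, which you mention, gives $\int_\omega w\,\bar u(t)\dx\le 0$ and settles the sign at once.

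With that repair, your route differs from the paper's only in how collinearity of $w$ and $\bar u(t)$ is obtained. The paper works on $I^+_{\bar u}$ and uses \eqref{E4.19} to show
$\gamma\bar\mu(t)=\sup_{v\in B_\gamma}\big(-\int_\omega wv\dx\big)\le-\int_\omega w\bar u(t)\dx\le\gamma\bar\mu(t)$.
So Cauchy--Schwarz holds with equality, and $\bar u(t)=c_t w$ with $c_t<0$ and $|c_t|=\gamma/\bar\mu(t)$. This gives $(\nu\gamma\e^{-\sigma_c t}+\bar\mu(t))\bar u(t)=-\gamma\bar\varphi(t)$. The paper then records \eqref{E5.3} and substitutes \eqref{E4.21} into \eqref{E5.1}; the complement of $I^+_{\bar u}$ is discarded because the integrand vanishes there.

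You instead use $\bar u(t)=\proj_{B_\gamma}\big(-\frac1\nu\e^{\sigma_c t}\bar\varphi(t)\big)$, already derived in the proof of Corollary \ref{C4.3}. This gives directly $\bar\varphi(t)=-\nu\e^{-\sigma_c t}\frac{r}{\gamma}\bar u(t)$ with $r\ge\gamma$ on all of $I_{\bar u}$. The sign is then automatic, and you need neither the equality case of Cauchy--Schwarz nor a separate treatment of the case $\bar\mu(t)=0$ (which is just $r=\gamma$). The paper's by-product \eqref{E5.3}, which is reused in the proof of Theorem \ref{T5.1}, also drops out of your computation: $\|\bar\varphi(t)\|_{L^2(\omega)}=\nu\e^{-\sigma_c t}r$ and $\bar\mu(t)=\nu\e^{-\sigma_c t}(r-\gamma)$.
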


\begin{proof}
First we prove that
\begin{equation}
\|\bar\varphi(t)\|_{L^2(\omega)} = \nu\gamma\e^{-\sigma_ct} + \bar\mu(t)\quad \forall t \in I^+_{\bar u}.
\label{E5.3}
\end{equation}
Using \eqref{E4.19} we get for every $t \in I^+_{\bar u}$:
\begin{align*}
&\|\bar u(t)\|_{L^2(\omega)}\|\bar\varphi(t) + \nu\e^{-\sigma_ct}\bar u(t)\|_{L^2(\omega)} = \gamma\|\bar\varphi(t) + \nu\e^{-\sigma_ct}\bar u(t)\|_{L^2(\omega)}\\
&= \sup_{v \in B_\gamma}\Big(-\int_\omega(\bar\varphi(t) + \nu\e^{-\sigma_ct}\bar u(t))v\dx\Big) \le -\int_\omega(\bar\varphi(t) + \nu\e^{-\sigma_ct}\bar u(t))\bar u(t)\dx\\
&\le \|\bar u(t)\|_{L^2(\omega)}\|\bar\varphi(t) + \nu\e^{-\sigma_ct}\bar u(t)\|_{L^2(\omega)}.
\end{align*}
Thus we have
\[
\|\bar u(t)\|_{L^2(\omega)}\|\bar\varphi(t) + \nu\e^{-\sigma_ct}\bar u(t)\|_{L^2(\omega)} = -\int_\omega(\bar\varphi(t) + \nu\e^{-\sigma_ct}\bar u(t))\bar u(t)\dx.
\]
This implies the existence of a constant $c_t < 0$ such that $\bar u(t) = c_t(\bar\varphi(t) + \nu\e^{-\sigma_ct}\bar u(t))$ in $\omega$. Taking the $L^2(\omega)$ norm on both sides of this identity, we obtain
\[
|c_t| = \frac{\|\bar u(t)\|_{L^2(\omega)}}{\|\bar\varphi(t) + \nu\e^{-\sigma_ct}\bar u(t)\|_{L^2(\omega)}} = \frac{\gamma}{\bar\mu(t)} \Rightarrow \bar u(x,t) = -\gamma\frac{\bar\varphi(t) + \nu\e^{-\sigma_ct}\bar u(t)}{\bar\mu(t)}.
\]
The last identity can be written in the form $(\nu\gamma\e^{-\sigma_ct} + \bar\mu(t))\bar u(t) = -\gamma\bar\varphi(t)$. Taking the $L^2(\omega)$ at both sides of this identity, \eqref{E5.3} follows. By \eqref{E5.1}, \eqref{E4.21}, and \eqref{E5.3} we get
\begin{align*}
\frac{\partial\mathcal{L}}{\partial u}(\bar u,\bar\mu)v &= \int_{Q_\omega}[\bar\varphi + (\nu\e^{-\sigma_ct} + \frac{1}{\gamma}\bar\mu)u]v\dx\dt = \int_{I^+_{\bar u}}\int_\omega[\bar\varphi + (\nu\e^{-\sigma_ct} + \frac{1}{\gamma}\bar\mu)u]v\dx\dt \\
&= \int_{I^+_{\bar u}}\int_\omega[\bar\varphi -\gamma (\nu\e^{-\sigma_ct} + \frac{1}{\gamma}\bar\mu)\frac{\bar\varphi}{\|\bar\varphi(t)\|_{L^2(\omega)}}]v\dx\dt\\
& = \int_{I^+_{\bar u}}\int_\omega[\|\bar\varphi(t)\|_{L^2(\omega)} - (\nu\gamma\e^{-\sigma_ct} + \bar\mu)]\frac{\bar\varphi}{\|\bar\varphi(t)\|_{L^2(\omega)}}v\dx\dt = 0.
\end{align*}
\end{proof}

To carry out the second-order analysis, we introduce the cone of critical directions:
\[
C_{\bar u} = \Big\{v \in H : \int_\omega\bar u(t)v(t)\dx \left\{\begin{array}{cl} \le 0 &\text{if } t \in I_{\bar u} \setminus I^+_{\bar u}\\ = 0 &\text{if } t \in I^+_{\bar u}\end{array}\right.\Big\}.
\]

\begin{remark}
i) We notice that $\bar\mu(t) = \|\bar\varphi(t) + \nu\e^{-\sigma_c t}\bar u(t)\|_{L^2(\omega)}$ is the unique Lagrange multiplier associated with the control constraint. Indeed, if $\tilde\mu$ is another Lagrange multiplier, we infer from the identities $\frac{\partial\mathcal{L}}{\partial u}(\bar u,\tilde\mu)v = \frac{\partial\mathcal{L}}{\partial u}(\bar u,\bar\mu)v = 0$ for all $v \in H$ that $\bar u(x,t)(\bar\mu(t) - \tilde\mu(t)) = 0$ for almost all $(x,t) \in Q_\omega$. If $\|\bar u(t)\|_{L^2(\omega)} > 0$, then this equality implies that $\tilde\mu(t) = \bar\mu(t)$. If $\|\bar u(t)\|_{L^2(\omega)} = 0$, then we have that $t \not\in I_{\bar u}$ and consequently $\tilde\mu(t) = \bar\mu(t) = 0$.

ii) Let us observe that
\[
\int_{Q_\omega} \bar\mu(t)\bar u(x,t)v(x,t)\dx\dt = \int_{I^+_{\bar u}}\bar\mu(t)\int_\omega\bar u(x,t)v(x,t)\dx\dt  = 0\quad \forall v \in C_{\bar u}.
\]
Therefore, we deduce from this equality, \eqref{E5.1}, and Lemma \ref{L5.1} that
\[
J'(\bar u)v = \frac{\partial\mathcal{L}}{\partial u}(\bar u,\bar\mu)v = 0\quad \forall v \in C_{\bar u}.
\]
\label{R5.2}
\end{remark}

\begin{theorem}
Let $\bar u$ be a local solution of \Pb, then $\frac{\partial^2\mathcal{L}}{\partial u^2}(\bar u,\bar\mu)v^2 \ge 0$ for all $v \in C_{\bar u}$.
\label{T5.1}
\end{theorem}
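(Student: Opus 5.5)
I would follow the approach of \cite{Casas-Kunisch2023C}. First approximate an arbitrary critical direction by nicer directions in the critical cone. Then for each of these build admissible curves and expand $\mathcal{L}$ to second order. Finally pass to the limit using the continuity of $\frac{\partial^2\mathcal{L}}{\partial u^2}(\bar u,\bar\mu)$ on $H\times H$ (Remark \ref{R5.1}).

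\emph{Step 1: approximation.} Given $v \in C_{\bar u}$, define $v_k$ as follows. For $t \in (0,k)$ with $\|\bar u(t)\|_{L^2(\omega)} \ge \gamma - \frac1k$ or $t\in I_{\bar u}$, and for $t \ge k$, take $\hat v_k(t)$ equal to the projection of $\proj_{[-k,k]}(v(t))$ that keeps the sign condition. Concretely, on $I^+_{\bar u}$ remove the component along $\bar u(t)$. On $I_{\bar u}\setminus I^+_{\bar u}$ keep $\int_\omega \bar u v\le0$ by subtracting the positive part of that component. Set $v_k(t)=0$ for $t\ge k$, and also on the set $\{t<k:\ \gamma-\frac1k\le\|\bar u(t)\|<\gamma\}$.

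Since $\bar u$ is continuous, these are measurable modifications. By dominated convergence, $v_k\to v$ in $H$. Each $v_k$ is bounded, has compact support in time, lies in $C_{\bar u}$, and satisfies $\int_\omega\bar u(t)v_k(t)=0$ on $I^+_{\bar u}$. It is also supported, inside $\{\|\bar u(t)\|<\gamma\}$, only where $\|\bar u(t)\|\le \gamma-1/k$. Since $\frac{\partial^2\mathcal L}{\partial u^2}(\bar u,\bar\mu)$ is continuous on $H\times H$, it suffices to prove $\frac{\partial^2\mathcal L}{\partial u^2}(\bar u,\bar\mu)v_k^2\ge0$.

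\emph{Step 2: admissible curve.} Fix such a $w=v_k$. The key point is to build $u_\rho\in\Uad$ with $u_\rho\to\bar u$ in $L^2_{\sigma_c}$ and $u_\rho=\bar u+\rho w+o(\rho)$. Where $\|\bar u(t)\|<\gamma$, we have $\bar u+\rho w$ admissible for $\rho$ small, uniformly, because $\|\bar u(t)\|\le\gamma-\frac1k$ and $w$ is bounded. On $I_{\bar u}$, take
\[
u_\rho(t)=\gamma\frac{\bar u(t)+\rho w(t)}{\max\{\gamma,\|\bar u(t)+\rho w(t)\|_{L^2(\omega)}\}}.
\]
Because $\int_\omega\bar u w\le 0$ there, $\|\bar u+\rho w\|^2\le\gamma^2+\rho^2\|w\|^2$. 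So $u_\rho=\bar u+\rho w+\rho^2 r_\rho$ with $r_\rho$ bounded in $L^\infty(0,k;L^2(\omega))$ and supported in $(0,k)$.

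Then use the Taylor expansion of $\mathcal L(\cdot,\bar\mu)$, which is $C^2$ on $U$. Local optimality gives $J(\bar u)\le J(u_\rho)$. On $I_{\bar u}$ we have $\|u_\rho(t)\|\le\gamma=\|\bar u(t)\|$, and $\bar\mu\ge0$ vanishes off $I_{\bar u}$. Together these give $\mathcal L(\bar u,\bar\mu)\le\mathcal L(u_\rho,\bar\mu)$. Next,
\[
\mathcal L(u_\rho,\bar\mu)-\mathcal L(\bar u,\bar\mu)=\frac{\partial\mathcal L}{\partial u}(\bar u,\bar\mu)(u_\rho-\bar u)+\tfrac12\frac{\partial^2\mathcal L}{\partial u^2}(u_\theta,\bar\mu)(u_\rho-\bar u)^2 .
\]
The first term vanishes by Lemma \ref{L5.1}. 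Dividing by $\rho^2/2$ and letting $\rho\to0$ gives $\frac{\partial^2\mathcal L}{\partial u^2}(\bar u,\bar\mu)w^2\ge0$. This uses that $(u_\rho-\bar u)/\rho\to w$ in $U$ and that $\frac{\partial^2\mathcal L}{\partial u^2}$ is continuous on $U$ (Theorem \ref{T4.3}).

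\emph{Main obstacle.} I expect the main difficulty to be the approximation in Step 1. The modified directions must remain in $C_{\bar u}$, converge in the weaker space $H$, and at the same time be regular enough (bounded with compact time support, so that they lie in $U$) for the curve argument. The time truncation is harmless thanks to the weights. The delicate part is handling the set where $\|\bar u(t)\|$ approaches $\gamma$ from below, which is why the threshold $\gamma-\frac1k$ is introduced.
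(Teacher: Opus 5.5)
\textbf{Gap: the step from $J(\bar u)\le J(u_\rho)$ to $\mathcal L(\bar u,\bar\mu)\le\mathcal L(u_\rho,\bar\mu)$ is argued in the wrong direction.} Your curve-and-expansion scheme is sound, but at this step
\[
\mathcal L(u_\rho,\bar\mu)-\mathcal L(\bar u,\bar\mu)=J(u_\rho)-J(\bar u)+\frac{1}{2\gamma}\int_0^\infty\bar\mu(t)\big(\|u_\rho(t)\|^2_{L^2(\omega)}-\|\bar u(t)\|^2_{L^2(\omega)}\big)\dt .
\]
The facts you cite are $\|u_\rho(t)\|_{L^2(\omega)}\le\gamma$ on $I_{\bar u}$, $\bar\mu\ge0$, and $\bar\mu=0$ off $I_{\bar u}$. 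They only show that the last integral is $\le0$, i.e.\ $\mathcal L(u_\rho,\bar\mu)-\mathcal L(\bar u,\bar\mu)\le J(u_\rho)-J(\bar u)$. That upper bound is what Step II of the proof of Theorem \ref{T5.2} uses for the sufficient condition. A necessary condition needs the lower bound.

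This is not a formality. The multiplier contribution $\frac1\gamma\int_0^\infty\bar\mu(t)\|w(t)\|^2_{L^2(\omega)}\dt$ in $\frac{\partial^2\mathcal L}{\partial u^2}(\bar u,\bar\mu)w^2$ is produced exactly here. A curve that entered the interior of the ball at order $\rho^2$ on $I^+_{\bar u}$ would only give a strictly weaker inequality.

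\textbf{The fix uses a property you built into $v_k$ but never invoked.} On $I^+_{\bar u}$ you have $\int_\omega\bar u(t)w(t)\dx=0$, hence $\|\bar u(t)+\rho w(t)\|^2_{L^2(\omega)}=\gamma^2+\rho^2\|w(t)\|^2_{L^2(\omega)}\ge\gamma^2$. So the maximum in your formula is the norm, and $\|u_\rho(t)\|_{L^2(\omega)}=\gamma$ exactly there. Moreover $\bar\mu$ vanishes outside $I^+_{\bar u}$: off $I_{\bar u}$ by \eqref{E4.20}, and on $I_{\bar u}\setminus I^+_{\bar u}$ by definition. Hence the multiplier term is constant along your curve, and $\mathcal L(u_\rho,\bar\mu)-\mathcal L(\bar u,\bar\mu)=J(u_\rho)-J(\bar u)\ge0$.

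With this correction the rest of your argument is fine. Lemma \ref{L5.1} kills the first-order term. You have $(u_\rho-\bar u)/\rho=w+\rho r_\rho\to w$ in $L^\infty(0,\infty;L^2(\omega))\subset U$. Remark \ref{R5.1} handles the density step.

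\textbf{Comparison with the paper.} The paper uses the explicit smooth curve $\phi_k(\rho)=\sqrt{1-\frac{\rho^2}{\gamma^2}\|v_k(t)\|^2_{L^2(\omega)}}\,\bar u+\rho v_k$, which also stays exactly on the sphere on $I^+_{\bar u}$. It expands $J$ rather than $\mathcal L$. It recovers the multiplier term by computing $J'(\bar u)\phi_k''(0)=\frac1\gamma\int_0^\infty\bar\mu(t)\|v_k(t)\|^2_{L^2(\omega)}\dt$ via \eqref{E4.20}, \eqref{E4.21} and \eqref{E5.3}. Your radial projection combined with Lemma \ref{L5.1} replaces that computation with the norm-preservation observation above.

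For the approximation, the paper rescales to $v/(1+\frac1k\|v(t)\|_{L^2(\omega)})$, which preserves the cone conditions automatically. Your pointwise truncation, re-projection onto the sign constraint, and time cut-off are valid but not needed.
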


\begin{proof}
Since $\bar u$ is a local minimizer of \Pb, there exists $\varepsilon > 0$ such that $J(\bar u) \le J(u)$ for all $u \in \Uad \cap B_\varepsilon(\bar u)$, where $B_\varepsilon(\bar u) = \{u \in L^2_{\sigma_c}(Q_\omega) : \|u - \bar u\|_{L^2_{\sigma_c}(Q_\omega)} < \varepsilon\}$. Let $v \in C_{\bar u} \cap L^\infty(0,\infty;L^2(\omega))$. The assumption $v \in L^\infty(0,\infty;L^2(\omega))$ will be removed later. Let us fix an integer
\[
k_0 > \max\Big\{\sqrt{\frac{2\max\{\|\bar u\|_{L^2_{\sigma_c}(Q_\omega)},\gamma\}}{\gamma^4\varepsilon}},\frac{1}{\gamma^2}\Big\},
\]
and set
\[
v_k(x,t) = \left\{\begin{array}{cl}0&\text{if } \displaystyle \gamma^2 - \frac{1}{k} < \|\bar u(t)\|^2_{L^2(\omega)} < \gamma^2\\v(x,t)&\text{otherwise}\end{array}\right.\quad \forall k \ge k_0.
\]
It is obvious that $\{v_k\}_{k \ge k_0} \subset L^\infty(0,\infty;L^2(\omega)) \subset H$. Moreover, the convergence $v_k \to v$ in $H$ follows from Lebesgue's dominated convergence theorem. Here, we define $\|v\|_H = \|v\|_{L^2_{\sigma_c}(Q_\omega)} + \|v\|_{L^2_{\lambda_c}(Q_\omega)}$.

For fixed $k \ge k_0$, we define
\[
\alpha_k = \min\Big\{\frac{\min\{1,\gamma\}\varepsilon}{2\max\{\|v\|_H,\|v\|_{L^\infty(0,\infty;L^2(\omega))}\}},\frac{\gamma - \sqrt{\gamma^2 - \frac{1}{k}}}{\|v\|_{L^\infty(0,\infty;L^2(\omega))}}\Big\}
\]
and $\phi_k:(-\alpha_k,+\alpha_k) \longrightarrow L^\infty(0,\infty;L^2(\omega))$ by
\[
\phi_k(\rho) = \sqrt{1 - \frac{\rho^2}{\gamma^2}\|v_k(t)\|^2_{L^2(\omega)}}\,\bar u + \rho v_k.
\]

By definition of $\alpha_k$ we have $\frac{\rho^2}{\gamma^2}\|v_k(t)\|^2_{L^2(\omega)} < 1$ for all $k \ge k_0$, $|\rho| < \alpha_k$, and almost all $t \in (0,\infty)$. Moreover, $|\phi_k(\rho)| \le |\bar u| + \frac{\varepsilon}{2\|v\|_{L^2_{\sigma_c}(Q_\omega)}}|v| \in L^\infty(0,\infty;L^2(\omega))$. Hence, the mapping $\phi_k$ is well defined and is of class $C^\infty$. Let us prove some properties of this mapping.

{\em I - $\phi_k(\rho) \in \Uad$ for all $\rho \in [0,+\alpha_k)$.} Let us set $u_\rho = \phi_k(\rho)$. Then, for almost all $t \in (0,\infty)$
\begin{align}
\|u_\rho(t)\|^2_{L^2(\omega)} &= \Big[1 - \frac{\rho^2}{\gamma^2}\|v_k(t)\|^2_{L^2(\omega)}\Big]\|\bar u(t)\|^2_{L^2(\omega)} + \rho^2\|v_k(t)\|^2_{L^2(\omega)}\notag\\
& + 2\rho \sqrt{1 - \frac{\rho^2}{\gamma^2}\|v_k(t)\|^2_{L^2(\omega)}}\int_\omega\bar u(t)v_k(t)\dx.\label{E5.4}
\end{align}
In the case $t \in I_{\bar u}$, we have $v_k(t) = v(t)$. Then, using $v \in C_{\bar u}$ we deduce that the last integral of the above inequality is less than or equal to zero and, consequently, \eqref{E5.4} leads to $\|u_\rho(t)\|^2_{L^2(\omega)} \le \gamma^2$. If $\gamma^2 - \frac{1}{k} < \|\bar u(t)\|^2_{L^2(\omega)} < \gamma^2$, then we have $v_k(t) = 0$ by definition and, hence, \eqref{E5.4} implies that $\|u_\rho(t)\|^2_{L^2(\omega)} \le \gamma^2$. Finally, we assume that $\|\bar u(t)\|^2_{L^2(\omega)} \le \gamma^2 - \frac{1}{k}$. Then, from the definition of $\alpha_k$ we infer
\[
\|u_\rho(t)\|_{L^2(\omega)} \le \sqrt{\gamma^2 - \frac{1}{k}} + \alpha_k\|v\|_{L^\infty(0,\infty;L^2(\omega))} \le \gamma.
\]

{\em II - $\|\phi_k(\rho) - \bar u\|_{L^2_{\sigma_c}(Q_\omega)} < \varepsilon$.} From the definition of $\phi_k$ we get
\begin{align*}
\|\phi_k(\rho) - \bar u\|_{L^2_{\sigma_c}(Q_\omega)} &\le \left|1 - \sqrt{1 - \frac{\rho^2}{\gamma^2}\|v_k(t)\|^2_{L^2(\omega)}}\right|\|\bar u\|_{L^2(Q_\omega)} + |\rho|\|v_k\|_{L^2_{\sigma_c}(Q_\omega)}\\
&\le \frac{\alpha_k^2}{\gamma^2}\|v\|^2_{L^\infty(0,\infty;L^2(\omega))}\|\bar u\|_{L^2_{\sigma_c}(Q_\omega)} + \alpha_k\|v\|_{L^2_{\sigma_c}(Q_\omega)}.
\end{align*}
From the definition of $\alpha_k$ and $k \ge k_0 > \frac{1}{\gamma^2}$ we obtain
\[
\alpha_k \le \frac{\gamma - \sqrt{\gamma^2 - \frac{1}{k}}}{\|v\|_{L^\infty(0,\infty;L^2(\omega))}} \le \frac{1}{k\gamma\|v\|_{L^\infty(0,\infty;L^2(\omega))}}.
\]
Moreover, $\alpha_k \le \frac{\varepsilon}{2\|v\|_H} \le \frac{\varepsilon}{2\|v\|_{L^2_{\sigma_c}(Q_\omega)}}$ holds. Then, we have
\[
\|\phi_k(\rho) - \bar u\|_{L^2_{\sigma_c}(Q_\omega)} \le \frac{\|\bar u\|_{L^2_{\sigma_c}(Q_\omega)}}{k^2\gamma^4} + \frac{\varepsilon}{2} < \varepsilon.
\]
The last inequality is consequence of $k \ge k_0 > \sqrt{\frac{2\|\bar u\|_{L^2_{\sigma_c}(Q_\omega)}}{\gamma^4\varepsilon}}$.

Now we define the function $\psi_k:(-\alpha_k,+\alpha_k) \longrightarrow \mathbb{R}$ by $\psi_k(\rho) = J(\phi_k(\rho))$. From the local optimality of $\bar u$ and the established properties of $\phi_k$ we infer that $\psi_k(0) = J(\bar u) \le J(\phi_k(\rho)) = \psi_k(\rho)$ for every $\rho \in [0,+\alpha_k)$. Since $\psi_k$ is of class $C^2$ and $\psi'_k(0) = 0$, then $\psi_k''(0) \ge 0$.  Hence, we get
\begin{align*}
& 0 \le \psi_k''(0) = J''(\phi_k(0))\phi_k'(0)^2 + J'(\phi_k(0))\phi''_k(0) = J''(\bar u)v_k^2 + J'(\bar u)\phi''_k(0)\\
& = \int_Q\Big[\e^{-\sigma_s t} - \bar\varphi f''(\bar y)\Big]z_{v_k}^2\dx\dt + \nu\int_{Q_\omega}\e^{-\sigma_c t}v_k^2\dx\dt\\
& - \frac{1}{\gamma^2}\int_0^\infty\|v_k(t)\|^2_{L^2(\omega)}\int_\omega(\bar\varphi + \nu\e^{-\sigma_c t}\bar u)\bar u\dx\dt.
\end{align*}
Using \eqref{E4.20}, \eqref{E4.21}, and \eqref{E5.3} we obtain that
\begin{align*}
&\int_0^\infty\|v_k(t)\|^2_{L^2(\omega)}\int_\omega(\bar\varphi + \nu\e^{-\sigma_c t}\bar u)\bar u\dx\dt\\
& = \int_{I_{\bar u}}\|v_k(t)\|^2_{L^2(\omega)}\Big(-\int_\omega\gamma\frac{\bar\varphi^2(t)}{\|\bar\varphi(t)\|_{L^2(\omega)}}\dx + \nu\e^{-\sigma_c t}\|\bar u(t)\|^2_{L^2(\omega)}\Big)\dt\\
&= \gamma\int_{I_{\bar u}}\|v_k(t)\|^2_{L^2(\omega)}\Big(-\|\bar\varphi(t)\|_{L^2(\omega)} + \nu\gamma\e^{-\sigma_c t}\Big)\dt\\
&= -\gamma\int_{I_{\bar u}}\|v_k(t)\|^2_{L^2(\omega)}\bar\mu(t)\dt = -\gamma\int_0^\infty\bar\mu(t)\|v_k(t)\|^2_{L^2(\omega)}\dt.
\end{align*}
Inserting this into the above inequality, we infer with \eqref{E5.2}
\[
0 \le \psi_k''(0) = \frac{\partial^2\mathcal{L}}{\partial u}(\bar u,\bar\mu)v_k^2,
\]
and thus the convergence $v_k \to v$ in $H$ implies
\[
\frac{\partial^2\mathcal{L}}{\partial u^2}(\bar u,\bar\mu)v^2 = \lim_{k \to \infty}\frac{\partial^2\mathcal{L}}{\partial u^2}(\bar u,\bar\mu)v^2_k \ge 0.
\]
Finally, we remove the assumption $v \in L^\infty(0,\infty;L^2(\omega))$. Given $v \in C_{\bar u}$, we define $v_k(x,t) = \frac{v(x,t)}{1 + \frac{1}{k}\|v(t)\|_{L^2(\omega)}}$ for every integer $k \ge 1$. Then, we have $\{v_k\}_{k = 1}^\infty \subset L^\infty(0,\infty;L^2(\omega))$ and $v_k \to v$ in $H$. Using $v \in C_{\bar u}$ we get
\[
\int_\omega\bar u(t)v_k(t)\dx = \frac{1}{1 + \frac{1}{k}\|v(t)\|_{L^2(\omega)}}\int_\omega\bar u(t)v(t)\dx \left\{\begin{array}{cl} \le 0&\text{if } t \in I_{\bar u}\\= 0&\text{if } t \in I^+_{\bar u}\end{array}\right.
\]
Therefore, we have $\frac{\partial^2\mathcal{L}}{\partial u^2}(\bar u,\bar\mu)v_k^2 \ge 0$ for all $k \ge 1$. Finally, passing to the limit as $k \to \infty$ we conclude that $\frac{\partial^2\mathcal{L}}{\partial u^2}(\bar u,\bar\mu)v^2 \ge 0$.
\end{proof}

\begin{remark}
If $y_d \in L^p_{-2\Lambda_f}(0,\infty;L^2(\Omega))$, $\sigma_c < \sigma_s + \Lambda_f(q + 2)$, and $u \in L^\infty(0,\infty;L^2(\omega))$, then the linear and bilinear forms $\frac{\partial\mathcal{L}}{\partial u}(\mu,u)$ and $\frac{\partial^2\mathcal{L}}{\partial u^2}(\mu,u)$ can be extended to continuous linear and bilinear forms on $L^2_{\sigma_c}(Q_\omega)$ and $L^2_{\sigma_c}(Q_\omega) \times L^2_{\sigma_c}(Q_\omega)$, respectively. Indeed, first we observe that
\begin{align*}
&\Big|\frac{\partial\mathcal{L}}{\partial u}(u,\mu)v\Big| \le \|\varphi_u\|_{L^2_{-\sigma_c}(Q_\omega)}\|v\|_{L^2_{\sigma_c}(Q_\omega)} + \Big(\nu + \frac{1}{\gamma}\|\mu\|_{L^\infty_{-\sigma_s}(0,\infty)}\Big)\|u\|_{L^2_{\sigma_c}(Q_\omega)}\|v\|_{L^2_{\sigma_c}(Q_\omega)}\\
&\le \Big(\|\varphi_u\|_{L^2_{-\sigma_s}(Q)} + \Big[\nu + \frac{1}{\gamma}\|\mu\|_{L^\infty_{-\sigma_s}(0,\infty)}\Big]\|u\|_{L^2_{\sigma_c}(Q_\omega)}\Big) \|v\|_{L^2_{\sigma_c}(Q_\omega)}\quad \forall v \in L^2_{\sigma_c}(Q_\omega).
\end{align*}

To prove the continuity of the bilinear mapping $\frac{\partial\mathcal{L}}{\partial u}(u,\mu) : L^2_{\sigma_c}(Q_\omega) \times L^2_{\sigma_c}(Q_\omega) \longrightarrow \mathbb{R}$ we define $\eta = \sigma_s + \Lambda_f(q+2) - \max\{\sigma_c,-2\Lambda_f\}$. Since $\sigma_s > -2(q+3)\Lambda_f$ we have that $\eta > 0$. Moreover, we can take $\rho > 2$ large enough so that $-2\Lambda_f < \lambda_c = -2\Lambda_f + \frac{\eta}{\rho(q+1)} < \frac{\sigma_s}{q + 3}$. Now we take $\bar\lambda = (q + 1)\lambda_c$, $\lambda = \max\{-2\Lambda_f,\sigma_c\} + \frac{\eta}{2\rho}$,  and $\hat\lambda = \max\{-2\Lambda_f,\sigma_c\} + \frac{\eta}{\rho}$. Then, we have
\[
2(\sigma_s + \Lambda_f) - \bar\lambda - 2\hat\lambda = \eta(2 - \frac{3}{\rho})  = \delta > 0.
\]
Hence, we have with \eqref{E3.7} and Lemma \ref{L4.1}
\begin{align}
&\Big|\frac{\partial^2\mathcal{L}}{\partial u^2}(u,\mu)(v_1,v_2)\Big| \le \|z_{v_1}\|_{L^2_{\sigma_s}(Q)}\|z_{v_2}\|_{L^2_{\sigma_s}(Q)}\notag\\
&+ \|\varphi_u\|_{L^\infty_{\delta - 2(\sigma_s+\Lambda_f)}(Q)}\|{f''}(y_u)\|_{L^2_{\bar\lambda}(Q)}\|z_{v_1}\|_{L^4_{2\hat\lambda}(Q)}\|z_{v_2}\|_{L^4_{2\hat\lambda}(Q)}\notag\\
&+ \Big(\nu + \frac{1}{\gamma}\|\mu\|_{L^\infty_{-\sigma_s}(0,\infty)}\Big)\|v_1\|_{L^2_{\sigma_c}(Q_\omega)} \|v_2\|_{L^2_{\sigma_c}(Q_\omega)}\notag\\
&\le C\|v_1\|_{L^2_{\sigma_s}(Q_\omega)}\|v_2\|_{L^2_{\sigma_s}(Q_\omega)} + \Big(\nu + \frac{1}{\gamma}\|\mu\|_{L^\infty_{-\sigma_s}(0,\infty)}\Big)\|v_1\|_{L^2_{\sigma_c}(Q_\omega)}\|v_2\|_{L^2_{\sigma_c}(Q_\omega)}\notag\\
&+ M_{\hat\lambda}\|\varphi_u\|_{L^\infty_{\delta - 2(\sigma_s+\Lambda_f)}(Q)}\|f''(y_u)\|_{L^2_{\bar\lambda}(Q)}\|v_1\|_{L^2_{\lambda}(Q_\omega)}\|v_2\|_{L^2_{\lambda}(Q_\omega)}\Big)\notag\\
&\le C'\|v_1\|_{L^2_{\sigma_c}(Q_\omega)}\|v_2\|_{L^2_{\sigma_c}(Q_\omega)}\quad \forall v_1, v_2 \in L^2_{\sigma_c}(Q_\omega),\label{E68}
\end{align}
where we have used that $\sigma_c < \lambda$. In addition we have that the proof of Lemma \ref{L5.1} can be repeated to deduce that $\frac{\partial\mathcal{L}}{\partial u}(\bar u,\bar\mu) v = 0$ for every $v \in L^2_{\sigma_c}(Q_\omega)$.
\label{R5.3}
\end{remark}

\begin{remark}
The condition $\sigma_c <  \sigma_s + \Lambda_f(q + 2)$ is a smallness condition on $\sigma_c$ relative to $\sigma_s$ and properties of the nonlinearity, involving the departure from monotonicity, measured by $\Lambda_f$ and the degree of nonlinearity expressed by $q$. For $\Lambda_f=0$ we simply require that $\sigma_c < \sigma_s$.
\label{R5.4}
\end{remark}

Now, we redefine the cone of critical directions as follows
\[
C_{\bar u} = \Big\{v \in L^2_{\sigma_c}(Q_\omega) : \int_\omega\bar u(t)v(t)\dx \left\{\begin{array}{cl} \le 0 &\text{if } t \in I_{\bar u} \setminus I^+_{\bar u}\\ = 0 &\text{if } t \in I^+_{\bar u}\end{array}\right.\Big\}.
\]

We observe that using Remark \ref{R5.3}, the same proof as for Theorem \ref{T5.1} can be used to deduce that if $\bar u$ is a local minimizer of \Pb, then $\frac{\partial^2\mathcal{L}}{\partial u^2}(\bar u,\bar\mu)v^2 \ge 0$ for all $v$ in this redefined cone.

\begin{theorem}
Assume that $\sigma_c < \sigma_s + \Lambda_f(q + 2)$ and $y_d \in L^p_{-2\Lambda_f}(0,\infty;L^2(\Omega))$. Let $\bar u \in \Uad$ satisfy the first-order optimality conditions \eqref{E4.16}--\eqref{E4.18} and the second-order condition $\frac{\partial^2\mathcal{L}}{\partial u^2}(\bar u,\bar\mu)v^2 > 0$ for every $v \in C_{\bar u} \setminus \{0\}$. Then, there exist $\kappa > 0$ and $\varepsilon > 0$ such that
\begin{equation}
J(\bar u) + \frac{\kappa}{2}\|u - \bar u\|^2_{L^2_{\sigma_c}(Q_\omega)} \le J(u)\ \ \forall u \in \Uad \text{ with } \|u - \bar u\|_{L^2_{\sigma_c}(Q_\omega)} \le \varepsilon.
\label{E5.5}
\end{equation}
\label{T5.2}
\end{theorem}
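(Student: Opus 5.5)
We argue by contradiction, in the standard way for second-order sufficient conditions in control problems (following \cite{Casas-Kunisch2023C}). If \eqref{E5.5} fails for every $\kappa,\varepsilon$, there is a sequence $\{u_k\}\subset\Uad$ with $\|u_k-\bar u\|_{L^2_{\sigma_c}(Q_\omega)} = \rho_k \to 0$ and
\[
J(u_k) < J(\bar u) + \frac{1}{2k}\rho_k^2 .
\]
Set $v_k = (u_k-\bar u)/\rho_k$, so that $\|v_k\|_{L^2_{\sigma_c}(Q_\omega)}=1$. Passing to a subsequence, $v_k\rightharpoonup v$ in $L^2_{\sigma_c}(Q_\omega)$. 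The controls $u_k$ are uniformly bounded in $L^\infty(0,\infty;L^2(\omega))$, and by \eqref{E4.1A} they converge to $\bar u$ in every $L^2_\lambda(Q_\omega)$ with $\lambda>0$. Hence $u_k\stackrel{*}{\rightharpoonup}\bar u$, and by \eqref{E2.18} and Corollary \ref{C4.2} we get $y_{u_k}\to\bar y$ and $\varphi_{u_k}\to\bar\varphi$ in the weighted norms. All forms will be used through their $L^2_{\sigma_c}$ extensions from Remark \ref{R5.3}; this is exactly where the hypotheses $\sigma_c<\sigma_s+\Lambda_f(q+2)$ and $y_d\in L^p_{-2\Lambda_f}$ enter.

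\emph{Step 1: $v\in C_{\bar u}$.} For a.e.\ $t\in I_{\bar u}$ we have $\|u_k(t)\|\le\gamma=\|\bar u(t)\|$, which gives $\int_\omega\bar u(t)(u_k(t)-\bar u(t))\,dx\le 0$. This sign condition is preserved under weak limits (test against nonnegative $\e^{\sigma_c t}$-weighted functions supported in $I_{\bar u}$), so $\int_\omega\bar u v\,dx\le0$ on $I_{\bar u}$.

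To obtain equality on $I^+_{\bar u}$, I would use the Lagrangian. Expanding $J(u_k)-J(\bar u)$ by Taylor's theorem and using Lemma \ref{L5.1},
\[
J'(\bar u)v_k = -\frac1\gamma\int\bar\mu\int_\omega\bar u v_k \ge 0,
\]
while the expansion also gives $J'(\bar u)v_k \le o(1)$. Passing to the limit yields $\int\bar\mu\int_\omega\bar u v = 0$, and together with the sign condition this forces $\int_\omega\bar u v=0$ on $I^+_{\bar u}$.

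\emph{Step 2: the second-order expansion.} Since $\|u_k(t)\|\le\gamma$, we have $\mathcal L(u_k,\bar\mu)\le J(u_k)+\frac12\int\bar\mu$ and $\mathcal L(\bar u,\bar\mu)=J(\bar u)+\frac12\int\bar\mu$ (the latter by complementarity). Therefore
\[
\mathcal L(u_k,\bar\mu)-\mathcal L(\bar u,\bar\mu) < \frac{\rho_k^2}{2k}.
\]
Expanding $\mathcal L$ to second order, with $\partial_u\mathcal L(\bar u,\bar\mu)=0$, gives
\[
\frac12\frac{\partial^2\mathcal L}{\partial u^2}(w_k,\bar\mu)v_k^2 < \frac1{2k}
\]
for intermediate points $w_k$. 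Next we show
\[
\Big|\Big[\frac{\partial^2\mathcal L}{\partial u^2}(w_k,\bar\mu)-\frac{\partial^2\mathcal L}{\partial u^2}(\bar u,\bar\mu)\Big]v_k^2\Big|\to0 .
\]
The $\nu\e^{-\sigma_c t}$ and $\bar\mu$ terms are identical in both forms. The remaining terms involve $z_{v_k}$ computed with the linearization at $w_k$ versus at $\bar u$, the differences $\varphi_{w_k}-\bar\varphi$ and $f''(y_{w_k})-f''(\bar y)$. These are controlled using the $L^4_{2\hat\lambda}$ bounds of Lemma \ref{L4.1}, the convergence in $C_{-\hat\lambda}$ from Corollary \ref{C4.2}, and Lemma \ref{L2.1} for tail control. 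The weight bookkeeping is the same as in \eqref{E68}. We conclude $\limsup\frac{\partial^2\mathcal L}{\partial u^2}(\bar u,\bar\mu)v_k^2\le0$.

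\emph{Step 3: contradiction.} Decompose
\[
\frac{\partial^2\mathcal L}{\partial u^2}(\bar u,\bar\mu)v_k^2=\int_Q[\e^{-\sigma_st}-\bar\varphi f''(\bar y)]z_{v_k}^2\,dx\,dt+\int_{Q_\omega}\big(\nu\e^{-\sigma_ct}+\tfrac{\bar\mu}{\gamma}\big)v_k^2\,dx\,dt .
\]
The map $v\mapsto z_v$ is compact in the relevant sense: $z_{v_k}\to z_v$ strongly in $L^2(Q_T)$ and $L^4(Q_T)$ for each $T$, with uniform tails from Lemma \ref{L4.1} with $\lambda<\hat\lambda$ and Lemma \ref{L2.1}. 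Hence the first integral converges. The second integral is weakly lower semicontinuous, so
\[
\frac{\partial^2\mathcal L}{\partial u^2}(\bar u,\bar\mu)v^2\le\liminf\le0 .
\]
By the second-order condition, $v=0$. But then the $z$-terms tend to $0$, and the $v_k$-term is at least $\nu\|v_k\|^2_{L^2_{\sigma_c}}=\nu$. This gives $\liminf\ge\nu>0$, contradicting the $\limsup\le0$.

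The main obstacle I expect is the uniform convergence in Step 2 together with the compactness of $v\mapsto z_v$ from weak $L^2_{\sigma_c}$ into the weighted $L^4$ space. Weak convergence only holds in $L^2_{\sigma_c}$, while Lemma \ref{L4.1} needs a strictly larger weight $\hat\lambda>\lambda\ge\sigma_c$. The margin $\delta>0$ constructed in Remark \ref{R5.3} must absorb both the loss from $\lambda$ to $\hat\lambda$ and the growth of $\bar\varphi$ and $f''(\bar y)$; I would reuse exactly those parameter choices.
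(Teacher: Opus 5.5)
Your proposal is correct and is essentially the paper's proof. It uses the same normalized contradiction sequence and the same argument that $v\in C_{\bar u}$ (first-order sign, then $J'(\bar u)v\le 0$ in the limit, then Lemma \ref{L5.1} for equality on $I^+_{\bar u}$); it uses the same Lagrangian trick $\int_0^\infty\bar\mu\|u_k(t)\|^2_{L^2(\omega)}\dt\le\int_0^\infty\bar\mu\|\bar u(t)\|^2_{L^2(\omega)}\dt$; and it uses the same mechanism of convergence on each $Q_T$ plus an exponentially weighted tail bound via Lemma \ref{L2.1} and the margin $\delta$ of Remark \ref{R5.3}.

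The main difference is organizational. You split the limit passage into $[\frac{\partial^2\mathcal{L}}{\partial u^2}(w_k,\bar\mu)-\frac{\partial^2\mathcal{L}}{\partial u^2}(\bar u,\bar\mu)]v_k^2\to 0$ followed by a Legendre-form argument at $\bar u$, whereas the paper proves \eqref{E5.11} directly along $(u_{\vartheta_k},v_k)$. There are also two harmless slips. The constant is $\frac{\gamma}{2}\int_0^\infty\bar\mu\dt$, not $\frac12\int_0^\infty\bar\mu\dt$. Strong convergence of $z_{v_k}$ in $L^4(Q_T)$ is not supplied by the estimates you cite, but it is also unnecessary, since $L^2(Q_T)$ convergence suffices against $\bar\varphi f''(\bar y)$, which is bounded on $Q_T$.
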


\begin{proof}
We argue by contradiction and assume that \eqref{E5.5} does not hold. Then, for every integer $k \ge 1$ there exists a control $u_k \in \Uad$ such that
\begin{equation}
\|u_k - \bar u\|_{L^2_{\sigma_c}(Q_\omega)} < \frac{1}{k}\ \text{ and }\ J(u_k) < J(\bar u) + \frac{1}{2k}\|u_k - \bar u\|^2_{L^2_{\sigma_c}(Q_\omega)}.
\label{E5.6}
\end{equation}
We define $\rho_k = \|u_k - \bar u\|_{L^2_{\sigma_c}(Q_\omega)} < \frac{1}{k}$ and $v_k = \frac{1}{\rho_k}(u_k - \bar u)$. Since $\|v_k\|_{L^2_{\sigma_c}(Q_\omega)} = 1$ for every $k$, taking a subsequence, we can assume that $v_k \rightharpoonup v$ in $L^2_{\sigma_c}(Q_\omega)$.

Observe that \eqref{E5.6} implies $\lim_{k \to \infty}\|u_k - \bar u\|_{L^p_{\lambda_c}(0,\infty;L^2(\omega))} = 0$. Indeed, we note that for $\sigma_c \le \lambda_c$
\begin{align*}
&\|u_k - \bar u\|_{L^p_{\lambda_c}(0,\infty;L^2(\omega))}\! \le\! \|u_k - \bar u\|^{\frac{p - 2}{p}}_{L^\infty(0,\infty;L^2(\omega))}\|u_k - \bar u\|^{\frac{2}{p}}_{L^2_{\lambda_c}(Q_\omega)}\\
& \le (2\gamma)^{\frac{p - 2}{p}}\|u_k - \bar u\|^{\frac{2}{p}}_{L^2_{\lambda_c}(Q_\omega)} \le (2\gamma)^{\frac{p - 2}{p}}\|u_k - \bar u\|^{\frac{2}{p}}_{L^2_{\sigma_c}(Q_\omega)} \le \frac{(2\gamma)^{\frac{p - 2}{p}}}{k^{\frac{2}{p}}} \stackrel{k \to \infty}{\longrightarrow} 0.
\end{align*}
If $\lambda_c < \sigma_c$, then we argue as in \eqref{E4.1A} and get that
\begin{align*}
&\|u_k - \bar u\|_{L^p_{\lambda_c}(0,\infty;L^2(\omega))} \le (2\gamma)^{\frac{p - 2}{p}}\|u_k - \bar u\|^{\frac{2}{p}}_{L^2_{\lambda_c}(Q_\omega)}\\
& \le (2\gamma)^{\frac{p - 2}{p}}C_{\lambda_c,\sigma_c}\|u_k - \bar u\|_{L^2_{\sigma_c}(Q_\omega)}^{\frac{\lambda_c}{p\sigma_c}} \le \frac{(2\gamma)^{\frac{p - 2}{p}}C_{\lambda_c,\sigma_c}}{k^{\frac{\lambda_c}{p\sigma_c}}} \stackrel{k \to \infty}{\longrightarrow} 0.
\end{align*}

Then, Theorem \ref{T3.1} yields $y_{u_k} = G(u_k) \to G(\bar u) = \bar y$ in $Y_{\sigma_s}$. The rest of the proof is split into three steps.

{\em Step I - $v \in C_{\bar u}$.} From \eqref{E4.18} and \eqref{E5.1} we infer that
\begin{equation}
0 \le J'(\bar u)v_k = \int_{Q_\omega}(\bar\varphi + \nu\e^{-\sigma_c t}\bar u)v_k\dx\dt \to \int_{Q_\omega}(\bar\varphi + \nu\e^{-\sigma_c t}\bar u)v\dx\dt = J'(\bar u)v.
\label{E5.7}
\end{equation}

Using the mean value theorem and \eqref{E5.6} we get
\[
\int_{Q_\omega}(\varphi_{\theta_k} + \nu \e^{-\sigma_c t}u_{\theta_k})v_k\dx\dt = J'(u_{\theta_k})v_k = \frac{J(u_k) - J(\bar u)}{\rho_k} < \frac{\rho_k}{2k} \to 0,
\]
where $\theta_k \in [0,1]$, $u_{\theta_k} = \bar u + \theta_k(u_k - \bar u)$, and $\varphi_{\theta_k}$ is the adjoint state corresponding to $u_{\theta_k}$. Since $y_{\theta_k}= G(u_{\theta_k}) \to G(\bar u) = \bar y$ in $Y_{\sigma_s}$ and $u_{\theta_k} \stackrel{*}{\rightharpoonup} \bar u$ in $L^\infty(0,\infty;L^2(\omega))$,  we deduce  from Corollary \ref{C4.1} that $\varphi_{\theta_k} \to \bar\varphi$ in $Y_{-\lambda}$ as $k \to \infty$ for every $\lambda < \sigma_s$. Using this fact with $\lambda = \sigma_c$, it is straightforward to pass to the limit in the above expression and to get $J'(\bar u)v \le 0$. This inequality and \eqref{E5.7} imply that $J'(\bar u)v = 0$.

Next, taking into account that $\|u_k(t)\|_{L^2(\omega)} \le \gamma$ for almost all $t > 0$, we have for almost every $t \in I_{\bar u}$
\[
\int_\omega\bar u(t)v_k(t)\dt = \frac{1}{\rho_k}\Big[\int_\omega\bar u(t)u_k(t)\dt - \int_\omega\bar u^2(t)\dt\Big] \le \frac{1}{\rho_k}\gamma\Big[\|u_k(t)\|_{L^2(\omega)} - \gamma\Big] \le 0.
\]
We define the function $\phi \in L^\infty(0,\infty)$ by $\phi(t) = 1$ if $\int_\omega\bar u(t)v(t)\dx > 0$ and 0 otherwise. Then, from the convergence $v_k \rightharpoonup v$ in $L^2_{\sigma_s}(Q_\omega)$ and the fact that $\phi\bar u \in L^2_{\sigma_c}(Q_\omega)$ we infer from the above inequality
\[
\int_{I_{\bar u}}\e^{-\sigma_c t}\phi(t)\int_\omega\bar u(t)v(t)\dx\dt = \lim_{k \to \infty}\int_{I_{\bar u}}\e^{-\sigma_c t}\phi(t)\int_\omega\bar u(t)v_k(t)\dx\dt \le 0.
\]
This is possible if and only if $\int_\omega\bar u(t)v(t)\dx \le 0$ for almost all $t \in I_{\bar u}$. Finally, we prove that this integral is 0 if $t \in I_{\bar u}^+$. For this purpose we use the observation from above that the proof of Lemma \ref{L5.1}, provides  that $\frac{\partial\mathcal{L}}{\partial u}(\bar u,\bar\mu) v = 0$ for every $v \in L^2_{\sigma_c}(Q_\omega)$.
Together with \eqref{E5.1} and the fact that $J'(\bar u)v = 0$ this implies
\[
0 = \frac{\partial\mathcal{L}}{\partial u}(\bar u,\bar\mu)v = J'(\bar u)v + \frac{1}{\gamma}\int_0^\infty\bar\mu(t)\int_\omega\bar u(t)v(t)\dx\dt = \frac{1}{\gamma}\int_{I_{\bar u}}\bar\mu(t)\int_\omega\bar u(t)v(t)\dx\dt,
\]
which implies that $\int_\omega\bar u(t)v(t)\dx = 0$ for almost all $t \in I^+_{\bar u}$, and thus $v \in C_{\bar u}$.

{\em Step II - $\frac{\partial^2\mathcal{L}}{\partial u^2}(\bar u,\bar\mu)v^2 \le 0$.} First we observe that
\begin{align*}
&\int_0^\infty\bar\mu(t)\|u_k(t)\|^2_{L^2(\omega)}\dt =
\int_{I_{\bar u}^+}\bar\mu(t)\|u_k(t)\|^2_{L^2(\omega)}\dt\\
& \le \int_{I_{\bar u}^+}\bar\mu(t)\|\bar u(t)\|^2_{L^2(\omega)}\dt = \int_0^\infty\bar\mu(t)\|\bar u(t)\|^2_{L^2(\omega)}\dt.
\end{align*}
This inequality and \eqref{E5.6} imply
\[
\mathcal{L}(u_k,\bar\mu) < \mathcal{L}(\bar u,\bar\mu) + \frac{1}{2k}\|u_k - \bar u\|^2_{L^2_{\sigma_c}(Q_\omega)}.
\]
Performing a Taylor expansion and using again Lemma \ref{L3.1} we infer for some $\vartheta_k \in [0,1]$
\begin{align*}
&\frac{1}{2}\frac{\partial^2\mathcal{L}}{\partial u^2}(\bar u + \vartheta_k(u_k - \bar u),\bar\mu)(u_k - \bar u)^2\\
& = \frac{\partial\mathcal{L}}{\partial u}(\bar u,\bar\mu)(u_k - \bar u) + \frac{1}{2}\frac{\partial^2\mathcal{L}}{\partial u^2}(\bar u + \vartheta_k(u_k - \bar u),\bar\mu)(u_k - \bar u)^2\\
& = \mathcal{L}(u_k,\bar\mu) - \mathcal{L}(\bar u,\bar\mu) < \frac{1}{2k}\|u_k - \bar u\|^2_{L^2_{\sigma_c}(Q_\omega)}.
\end{align*}
Dividing the above inequality by $\frac{\rho_k^2}{2}$ we get
\begin{equation}
\frac{\partial^2\mathcal{L}}{\partial u^2}(\bar u + \vartheta_k(u_k - \bar u),\bar\mu)v_k^2 \le \frac{1}{k}\|v_k\|^2_{L^2_{\sigma_c}(Q_\omega)} = \frac{1}{k}.
\label{E5.8}
\end{equation}
Denoting by $u_{\vartheta_k} = \bar u + \vartheta_k(u_k - \bar u)$, $y_{\vartheta_k}$ its associated state, and $\varphi_{\vartheta_k}$ the corresponding adjoint state, we get from \eqref{E5.2}
\begin{align}
&\frac{\partial^2\mathcal{L}}{\partial u^2}(\bar u + \vartheta_k(u_k - \bar u),\bar\mu)v_k^2 = \int_Q\Big[\e^{-\sigma_s t} - f''(y_{\vartheta_k})\varphi_{\vartheta_k}\Big]z_{\vartheta_k,v_k}^2\dx\dt\notag\\
& + \nu\|v_k\|^2_{L^2_{\sigma_c}(Q_\omega)} + \frac{1}{\gamma}\int_0^\infty\bar\mu(t)\|v_k(t)\|^2_{L^2(\omega)}\dt,\label{E5.9}
\end{align}
where $z_{\vartheta_k,v_k} = G'(u_{\vartheta_k})v_k$ satisfies the equation
\begin{equation}
\left\{\begin{array}{l}
\displaystyle
\frac{\partial z_{\vartheta_k,v_k}}{\partial t} + Az_{\vartheta_k,v_k} +  f'(y_{\vartheta_k})z_{\vartheta_k,v_k} = v_k\chi_\omega \mbox{ in } Q,\\ \partial_{nu_A}z_{\vartheta_k,v_k} = 0  \mbox{ on } \Sigma,\ z_{\vartheta_k,v_k}(0) = 0  \mbox{ in } \Omega.
\end{array}
\right.
\label{E5.10}
\end{equation}
Next, we prove that
\[
\frac{\partial^2\mathcal{L}}{\partial u^2}(\bar u,\bar\mu)v^2 \le \liminf_{k \to \infty} \frac{\partial^2\mathcal{L}}{\partial u^2}(\bar u + \vartheta_k(u_k - \bar u),\bar\mu)v_k^2.
\]
Then combining this and \eqref{E5.8} we conclude the proof of the second step. Looking at the expression \eqref{E5.9}, the only delicate issue in this lower limit is the integral $\int_Q f''(y_{\vartheta_k})\varphi_{\vartheta_k}z_{\vartheta_k,v_k}^2\dx\dt$. We prove that
\begin{equation}
\lim_{k \to \infty}\int_Q f''(y_{\vartheta_k})\varphi_{\vartheta_k}z_{\vartheta_k,v_k}^2\dx\dt = \int_Q f''(\bar y)\bar\varphi z_v^2\dx\dt,
\label{E5.11}
\end{equation}
where $z_v =G'(\bar u)v$. First, we  strong convergence $u_{\vartheta_k} \to \bar u$ in $L^p_{\sigma_c}(0,\infty;L^2(\omega))$ and \eqref{E2.18} yields the convergence $y_{u_{\vartheta_k}} \to \bar y$ and hence $f''(y_{u_{\vartheta_k}}) \to f''(\bar y)$ strongly in $L^\infty(Q_T)$ for every $T < \infty$.
We also get from \eqref{E2.18} and Corollary \ref{C4.1} that $\varphi_{\vartheta_k} \to \bar\varphi$ in $C_{-\lambda}(\bar Q)$ for every $\lambda < \sigma_s$, and from  \eqref{E4.4} in Theorem \ref{T4.2} that $\|\varphi_{\vartheta_k}\|_{C_{-\sigma_s}(\bar Q)}$ is bounded. From the boundedness of $\{f'(y_{\vartheta_k})\}_{k = 1}^\infty$ and $\{z_{\vartheta_k,v_k}\}_{k = 1}^\infty$ in $L^\infty(Q_T)$ and $L^2(Q_T)$, respectively, (see Lemma \ref{L3.2}) and  equation \eqref{E5.10} we infer that $\{z_{\vartheta_k,v_k}\}_{k = 1}^\infty$ is bounded in $W(0,T)$ for every $T < \infty$. Then, we can pass to the limit in the equation \eqref{E5.10} and use the compactness of the embedding $W(0,T) \subset L^2(Q_T)$ to deduce the strong convergence $z_{\vartheta_k,v_k} \to z_v$ in $L^2(Q_T)$ for every $T < \infty$. Using these convergence properties we infer that
\begin{equation}
\lim_{k \to \infty}\int_0^T\Big\| f''(y_{\vartheta_k})\varphi_{\vartheta_k}z_{\vartheta_k,v_k}^2 - f''(\bar y)\bar\varphi z_v^2\Big\|_{L^1(\Omega)}\dt = 0 \quad \forall T < \infty.
\label{E5.12}
\end{equation}

Now, we use the assumption $\sigma_s + \Lambda_f(q+2) > \sigma_c$ and define $\eta$, $\rho$, $\sigma_c$, $\lambda$, $\hat\lambda$, and $\delta$ as above \eqref{E68} . Corollary \ref{C4.2} implies the boundedness of $\|\varphi_{\vartheta_k}\|_{C_{{\frac{\delta}{2}} - 2(\sigma_s + \Lambda_f)}(\bar Q)}$. Using this we get
\begin{align*}
&\int_0^\infty e^{\frac{\delta}{4}t}
\int_\omega \left| f''(y_{\vartheta_k}) \varphi_{\vartheta_k} \right|
z_{\vartheta_k,v_k}^2 \, dx \, dt\\
&\le \|\varphi_{\vartheta_k}\|_{C_{{\frac{\delta}{2} - 2(\sigma_s + \Lambda_f)}}(\bar Q)}
\|f''(y_{\vartheta_k})\|_{L^2_{\bar\lambda}(Q)}
\|z_{\vartheta_k,v_k}\|^2_{L^4_{2\hat\lambda }(Q)},
\end{align*}
from which by \eqref{E3.5} and Lemma \ref{L4.1} we get
\begin{align*}
&\int_0^\infty e^{\frac{\delta}{4}t}
\int_\omega \left| f''(y_{\vartheta_k}) \varphi_{\vartheta_k} \right|
z_{\vartheta_k,v_k}^2 \, dx \, dt\\
&\le C_M M_{\tilde\lambda}  \|\varphi_{\vartheta_k}\|_{C_{\frac{\delta}{2} - 2(\sigma_s + \Lambda_f)}(\bar Q)} \|v_k\|^2_{L^2_{\sigma_c}(Q_\omega)} \le  C_M M_{\tilde\lambda}  \|\varphi_{\vartheta_k}\|_{C_{\frac{\delta}{2} - 2(\sigma_s + \Lambda_f)}(\bar Q)} <\infty,
\end{align*}
where we used that  $\|v_k\|_{L^2_{\sigma_c}(Q_\omega)} = 1$.  Then, applying Lemma \ref{L2.1} with $X = L^1(\Omega)$, $r = 1$, and $\lambda = -\frac{\delta}{4}$ we infer the desired convergence \eqref{E5.11}.\vspace{2mm}

 \vspace{2mm}

{\em Step III - Final contradiction.}  The facts proved in Steps I and II along with the assumption $\frac{\partial^2\mathcal{L}}{\partial u^2}(\bar u,\bar\mu)v^2 > 0$ for every $v \in C_{\bar u} \setminus \{0\}$ lead to $v = 0$ and $z_v = 0$. Therefore, looking at the relations \eqref{E5.8}, \eqref{E5.9}, and \eqref{E5.11} we obtain
\[
0 \ge \liminf_{k \to \infty}\frac{\partial^2\mathcal{L}}{\partial u^2}(\bar u + \vartheta_k(u_k - \bar u),\bar\mu)v_k^2 \ge \liminf_{k \to \infty}\nu\|v_k\|^2_{L^2_{\sigma_c}(Q_\omega)} = \nu,
\]
which contradicts the fact that $\nu > 0$.
\end{proof}

\subsection{Case II: $\K = \{v \in L^2(\omega) : \alpha \le v(x) \le \beta \text{ \rm for a.a. } x \in \omega\}$}
\label{S5.2}

In this case, the cone of critical directions is defined by
\[
C_{\bar u} = \Big\{v \in H : J'(\bar u)v = 0 \text{ and } v(x,t) \left\{\begin{array}{cl} \ge 0&\text{if } \bar u(x,t) = \alpha\\\le 0&\text{if } \bar u(x,t) = \beta\end{array}\right.\Big\}.
\]
Analogously to Theorem \ref{T5.1} we have the following result.
\begin{theorem}
If $\bar u$ is a local minimizer of \Pb, then $J''(\bar u)v^2 \ge 0$ for all $v \in C_{\bar u}$.
\label{T5.3}
\end{theorem}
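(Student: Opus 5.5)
Following the scheme of Theorem \ref{T5.1}, the plan is to approximate a critical direction by bounded directions along which $\bar u+\rho v$ stays admissible for small $\rho>0$. We then pass to the limit using the continuity of $J''(\bar u)$ on $H\times H$ from Remark \ref{R4.1}.

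First, the box constraint \eqref{E1.3} and \eqref{E4.22} give the usual sign information, pointwise a.e. in $Q_\omega$. Writing $\bar d=\bar\varphi+\nu\e^{-\sigma_ct}\bar u$, we have $\bar u=\alpha$ where $\bar d>0$, $\bar u=\beta$ where $\bar d<0$, and $\bar d=0$ where $\alpha<\bar u<\beta$. Since $J'(\bar u)v=\int_{Q_\omega}\bar d v\dx\dt=0$ and $\bar d v\ge0$ a.e. for $v\in C_{\bar u}$, every $v\in C_{\bar u}$ satisfies $v=0$ a.e. on $\{\bar d\neq0\}$.

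Given $v\in C_{\bar u}$, define for $k\ge1$
\[
v_k(x,t)=\begin{cases}0 & \text{if } \alpha<\bar u(x,t)<\alpha+\tfrac1k \text{ or } \beta-\tfrac1k<\bar u(x,t)<\beta,\\ \proj_{[-k,k]}(v(x,t)) & \text{otherwise}.\end{cases}
\]
Each $v_k$ lies in $L^\infty(Q_\omega)\subset L^\infty(0,\infty;L^2(\omega))$. The sign conditions at $\bar u=\alpha$ and $\bar u=\beta$ are preserved, and $v_k=0$ wherever $v=0$, so $\bar d v_k=0$ a.e. and $J'(\bar u)v_k=0$; thus $v_k\in C_{\bar u}$. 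Since $|v_k|\le|v|$ and $v_k\to v$ a.e., dominated convergence yields $v_k\to v$ in $H$.

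For $0\le\rho\le 1/k^2$ we have $\bar u+\rho v_k\in\Uad$. Where $\alpha+\frac1k\le\bar u\le\beta-\frac1k$ this follows from $|\rho v_k|\le \frac1k$, and at the active values it follows from the signs of $v_k$. Moreover $\|\rho v_k\|_{L^2_{\sigma_c}(Q_\omega)}\to0$ as $\rho\to0$, so by local optimality $\psi_k(\rho)=J(\bar u+\rho v_k)\ge J(\bar u)$ for small $\rho\ge0$.

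By Remark \ref{R4.1}, $J$ is $C^2$ on $L^\infty(0,\infty;L^2(\omega))$, hence $\psi_k$ is $C^2$ near $0$. Because $\psi_k'(0)=J'(\bar u)v_k=0$, Taylor's formula $\psi_k(\rho)=\psi_k(0)+\frac{\rho^2}{2}\psi_k''(\theta\rho)$ together with continuity of $\psi_k''$ gives $J''(\bar u)v_k^2=\psi_k''(0)\ge0$. Finally, the extension of $J''(\bar u)$ to a continuous bilinear form on $H\times H$ (Remark \ref{R4.1}) lets us pass to the limit and obtain $J''(\bar u)v^2\ge0$.

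The main obstacle is controlling the limiting step, i.e. justifying that the truncation converges in $H$ rather than only pointwise. Dominated convergence settles this, because $H$ is a weighted $L^2$ space and the dominating function is $|v|$. The one subtle point is ensuring $J'(\bar u)v_k=0$, which rests on the observation that $v$ vanishes wherever $\bar d\ne0$.
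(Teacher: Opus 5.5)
Your proof is correct and follows the paper's argument essentially verbatim. It uses the same truncation $v_k$ and the same observation that $J'(\bar u)v=0$ forces $(\bar\varphi+\nu\e^{-\sigma_c t}\bar u)v=0$ a.e., which you justify more explicitly than the paper. It also uses the same second-order Taylor argument along $\rho\mapsto\bar u+\rho v_k$, and the same passage to the limit via the continuity of $J''(\bar u)$ on $H\times H$ from Remark~\ref{R4.1}.

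One small slip: $\rho\le 1/k^2$ alone does not keep $\bar u+\rho v_k\le\beta$ where $\bar u=\alpha$. You need $\rho k\le\beta-\alpha$, and symmetrically at $\bar u=\beta$. This is why the paper also imposes $\rho\le(\beta-\alpha)/k$; restricting to $k\ge 1/(\beta-\alpha)$ fixes it just as well.
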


\begin{proof}
Since $\bar u$ is a local minimizer of \Pb, there exists $\varepsilon > 0$ such that $J(\bar u) \le J(u)$ for all $u \in \Uad \cap B_\varepsilon(\bar u)$. Given $v \in C_{\bar u}$ we define for every integer $k \ge 1$ the function $v_k$ by
\[
v_k(x,t)=\left\{\begin{array}{cl}0&\text{if } \alpha < \bar u(x,t) < \alpha + \frac{1}{k}\text{ or } \beta - \frac{1}{k} < \bar u(x,t) < \beta,\\\proj_{[-k,+k]}(v(x,t))&\text{otherwise.}\end{array}\right.
\]
It is obvious that $\{v_k\}_{k = 1}^\infty \subset L^\infty(Q_\omega) \subset H$ and $v_k \to v$ in $H$ as $k \to \infty$. Furthermore, if we set $\rho_k = \min\{\frac{1}{k^2},\frac{\beta - \alpha}{k},\frac{\varepsilon}{\|v\|_H}\}$, then $\bar u + \rho v_k \in \Uad \cap B_\varepsilon(\bar u)$ for every $\rho \in (0,\rho_k)$. In view of \eqref{E4.22}, it is straightforward to check that the condition $J'(\bar u)v = 0$ in the definition of $C_{\bar u}$ is equivalent to $(\bar\varphi + \nu\bar u)(x,t)v(x,t) = 0$ for almost all $(x,t) \in Q_\omega$. Using this fact, it is immediate that $J'(\bar u)v_k = 0$ for every $k$. Then, performing a Taylor expansion, we get for every $\rho \in (0,\rho_k)$
\[
0 \le J(\bar u + \rho v_k) - J(\bar u) = \rho J'(\bar u)v_k + \frac{\rho^2}{2}J''(\bar u + \theta_{\rho,k}\rho v_k)v_k^2 = \frac{\rho^2}{2}J''(\bar u + \theta_{\rho,k}\rho v_k)v_k^2.
\]
Dividing by $\frac{\rho^2}{2}$ we obtain  $J''(\bar u + \theta_{\rho,k}\rho v_k)v_k^2 \ge 0$. Since $J:L^p_{\lambda_c}(0,\infty;L^2(\omega)) \longrightarrow \mathbb{R}$ is of class $C^2$ and  $\bar u + \theta_{\rho,k}\rho v_k \to \bar u$ in $L^p_{\lambda_c}(0,\infty;L^2(\omega))$ as $\rho \to 0$, the inequality $J''(\bar u)v_k^2 \ge 0$ follows. Moreover, since $v_k \to v$ in $H$ we infer from Remark \ref{R4.1}-ii) that $J''(\bar u)v^2 = \lim_{k \to \infty}J''(\bar u)v_k^2 \ge 0$.
\end{proof}

To prove the second-order sufficient optimality conditions, we proceed as in \S\ref{S5.1}, redefining $C_{\bar u}$ with $H$ replaced by $L^2_{\sigma_c}(Q_\omega)$ and using the assumptions on $y_d$ and $\sigma_c$ as in Theorem \ref{T5.2}.

\begin{theorem}
Assume that $\sigma_c < \sigma_s + \Lambda_f(q + 2)$ and $y_d \in L^p_{-2\Lambda_f}(0,\infty;L^2(\Omega))$, that $\bar u \in \Uad$ satisfies the first-order optimality conditions, and that $J''(\bar u)v^2 > 0$ for every $v \in C_{\bar u} \setminus \{0\}$. Then \eqref{E5.5} holds.
\label{T5.4}
\end{theorem}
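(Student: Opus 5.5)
The plan is to copy the contradiction argument of Theorem \ref{T5.2}, with the Lagrangian replaced by $J$ and the box constraints handled pointwise. We assume \eqref{E5.5} fails. Then for every $k$ there is $u_k \in \Uad$ with $\rho_k = \|u_k - \bar u\|_{L^2_{\sigma_c}(Q_\omega)} < \frac1k$ and $J(u_k) < J(\bar u) + \frac{1}{2k}\rho_k^2$. We set $v_k = (u_k - \bar u)/\rho_k$ and extract a subsequence with $v_k \rightharpoonup v$ in $L^2_{\sigma_c}(Q_\omega)$. Since $\Uad$ is bounded in $L^\infty(0,\infty;L^2(\omega))$, the interpolation estimates given in the proof of Theorem \ref{T5.2} (including the variant of \eqref{E4.1A} when $\lambda_c<\sigma_c$) show that $u_k \to \bar u$ in $L^p_{\lambda_c}(0,\infty;L^2(\omega))$. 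Theorem \ref{T3.1} then gives $y_{u_k}\to\bar y$ in $Y_{\sigma_s}$, and Corollary \ref{C4.1} gives convergence of the adjoints.

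\emph{Step I: $v\in C_{\bar u}$.} By \eqref{E4.18}, $J'(\bar u)v_k\ge0$, and passing to the limit gives $J'(\bar u)v\ge 0$; the passage is legitimate because $\bar\varphi\in Y_{-\sigma_s}\subset L^2_{-\sigma_c}$ when $\sigma_c<\sigma_s$, and under our assumption this inclusion holds via Corollary \ref{C4.2}. For the reverse inequality we use the mean value theorem, $J'(u_{\theta_k})v_k=(J(u_k)-J(\bar u))/\rho_k<\rho_k/(2k)$, together with $\varphi_{\theta_k}\to\bar\varphi$ in $Y_{-\sigma_c}$, to obtain $J'(\bar u)v\le0$. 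Hence $J'(\bar u)v=0$. The sign conditions come from $u_k \in \Uad$: where $\bar u=\alpha$ we have $v_k=(u_k-\alpha)/\rho_k\ge0$, and where $\bar u=\beta$ we have $v_k\le 0$. These sets of nonnegative (respectively nonpositive) functions are closed and convex in $L^2_{\sigma_c}(Q_\omega)$, so they are weakly closed and $v$ inherits the signs. This shows $v$ lies in the cone $C_{\bar u}$ redefined with $L^2_{\sigma_c}(Q_\omega)$.

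\emph{Step II: $J''(\bar u)v^2\le0$.} A Taylor expansion gives $\rho_kJ'(\bar u)v_k+\frac{\rho_k^2}{2}J''(u_{\vartheta_k})v_k^2<\frac{\rho_k^2}{2k}$. The first term is nonnegative by \eqref{E4.18}, so it can be dropped, which yields $J''(u_{\vartheta_k})v_k^2<\frac1k$. In the expression \eqref{E4.13}, the term $\nu\int\e^{-\sigma_ct}v_k^2$ is weakly lower semicontinuous, and so is $\int\e^{-\sigma_st}z_{\vartheta_k,v_k}^2$. The main obstacle is the term $\int_Q f''(y_{\vartheta_k})\varphi_{\vartheta_k}z_{\vartheta_k,v_k}^2$. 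We would handle it exactly as in the proof of \eqref{E5.11}. On each $Q_T$ we get strong convergence from compactness of $W(0,T)\subset L^2(Q_T)$, the convergence $f''(y_{\vartheta_k})\to f''(\bar y)$ in $L^\infty(Q_T)$, and the convergence $\varphi_{\vartheta_k}\to\bar\varphi$ in $C(\bar Q_T)$. For the tail, we choose $\eta,\lambda,\hat\lambda,\delta$ as before \eqref{E68}; this is where $\sigma_c<\sigma_s+\Lambda_f(q+2)$ and $y_d\in L^p_{-2\Lambda_f}$ are used. Corollary \ref{C4.2} then bounds $\varphi_{\vartheta_k}$ in $C_{\delta/2-2(\sigma_s+\Lambda_f)}(\bar Q)$, \eqref{E3.5} bounds $f''(y_{\vartheta_k})$ in $L^2_{\bar\lambda}$, and Lemma \ref{L4.1} bounds $z_{\vartheta_k,v_k}$ in $L^4_{2\hat\lambda}$ by $\|v_k\|_{L^2_{\sigma_c}}=1$. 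This gives a uniform bound with the weight $\e^{\delta t/4}$, and Lemma \ref{L2.1} with $r=1$, $X=L^1(\Omega)$ yields convergence of the whole integral. Consequently $J''(\bar u)v^2\le\liminf_k J''(u_{\vartheta_k})v_k^2\le0$.

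\emph{Step III.} The hypothesis of positivity on $C_{\bar u}\setminus\{0\}$ forces $v=0$, and then $z_v=0$. The limit computations of Step II now give $0\ge\liminf_k J''(u_{\vartheta_k})v_k^2\ge\liminf_k\nu\|v_k\|^2_{L^2_{\sigma_c}(Q_\omega)}=\nu>0$, which is a contradiction.
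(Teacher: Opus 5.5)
Your proposal is correct and follows essentially the paper's route: the paper only says to combine the standard finite-horizon contradiction argument for box constraints (drop the first-order term via $J'(\bar u)(u_k-\bar u)\ge 0$, and get the sign conditions from weak closedness) with the infinite-horizon tools of the proof of Theorem~\ref{T5.2} (Corollary~\ref{C4.1} with $\lambda=\sigma_c$, and control of the tail of $\int_Q f''(y_{\vartheta_k})\varphi_{\vartheta_k}z_{\vartheta_k,v_k}^2$ via Corollary~\ref{C4.2}, \eqref{E3.5}, Lemma~\ref{L4.1} and Lemma~\ref{L2.1}), and this is exactly what you carry out. As a minor simplification, $\bar\varphi\in L^2_{-\sigma_c}(Q)$ needs no appeal to Corollary~\ref{C4.2}, since $\Lambda_f\le 0$ already gives $\sigma_c<\sigma_s+\Lambda_f(q+2)\le\sigma_s$.
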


For the proof of this result for finite horizon case  the reader is referred to \cite{Bonnans98},\cite{Casas-Troltzsch2012}. The difficulties due to the infinite horizon can be overcome by following the arguments used in the proof of Theorem \ref{T5.2}.

\section{Approximation by finite horizon problems}
\label{S6}

In this section, we consider the approximation of \Pb by finite horizon optimal control problems and provide error estimates for these approximations. For every $0 < T < \infty$ we consider the  control problem
\[
\PbT \quad \min_{u \in \UTad} J_T(u),
\]
where $\UTad = \{u \in L^\infty(0,T;L^2(\omega)) : u(t) \in \K \text{ for a.a. } t \in (0,T)\}$,
\[
J_T(u) = \frac{1}{2}\int_0^T[\e^{-\sigma_s t}\|y_{T,u}(t) - y_d(t)\|_{L^2(\Omega)}^2 + \nu\e^{-\sigma_ct}\|u(t)\|_{L^2(\omega)}^2]\dt,
\]
where $y_{T,u}$ denotes the solution of the equation
\begin{equation}
\left\{
\begin{array}{l}
\displaystyle
\frac{\partial y}{\partial t} + Ay + f(y) = g + u\chi_\omega \mbox{ in } Q_T,\\ \partial_{n_A}y = 0  \mbox{ on } \Sigma_T,\ y(0) = y_0  \mbox{ in } \Omega.
\end{array}
\right.
\label{E6.1}
\end{equation}

\textbf{Notation:} In the sequel, $u_0$ denotes a control of $\Uad$. Then, for every control $u \in \UTad$ we consider an extension $\hat u \in \Uad$ by setting $\hat u(x,t) = u_0(x,t)$ if $t > T$.\vspace{2mm}

The next two theorems show that the control problems \PbT approximate \Pb.

\begin{theorem}
For all $T > 0$  problem \PbT has at least one solution $u_T$. The extensions $\{\hat u_T\}_{T > 0}$ of any family of solutions are bounded in $L^2_{\sigma_c}(Q_\omega)$. Every weak limit $\bar u$ in $L^2_{\sigma_c}(Q_\omega)$ of a sequence $\{\hat u_{T_k}\}_{k = 1}^\infty$ with $T_k \to \infty$ as $k \to \infty$ is a solution of \Pb. Moreover,  strong convergence $\hat u_{T_k} \to \bar u$ in $L^p_{\lambda_c}(0,\infty;L^2(\omega))$ holds.
\label{T6.1}
\end{theorem}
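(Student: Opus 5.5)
Existence for \PbT follows exactly as in Theorem \ref{T4.1}, now on the bounded interval $(0,T)$. Take a minimizing sequence in $\UTad$; it is bounded in $L^\infty(0,T;L^2(\omega))$ because $\K$ is bounded, so a subsequence converges weakly$^*$. The finite-interval version of Lemma \ref{L2.2} gives strong convergence of the states in $L^2(Q_T)$. The cost $J_T$ is weakly lower semicontinuous: the state term converges, and the control term is convex and continuous. So the weak$^*$ limit is a minimizer $u_T$.

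Boundedness of the extensions is immediate. Every $u\in\Uad$ satisfies $\|u(t)\|_{L^2(\omega)}\le\gamma_{ad}$, hence $\|\hat u_T\|^2_{L^2_{\sigma_c}(Q_\omega)}\le\gamma_{ad}^2/\sigma_c$ for all $T$. By the same bound, $\{\hat u_{T_k}\}$ is bounded in $L^\infty(0,\infty;L^2(\omega))$. After passing to a subsequence, the weak $L^2_{\sigma_c}$ limit $\bar u$ is therefore also the weak$^*$ limit in $L^\infty(0,\infty;L^2(\omega))$, and $\bar u\in\Uad$ because $\Uad$ is convex and closed. Note also that $y_{T,u_T}=y_{\hat u_T}$ on $(0,T)$, by uniqueness of the state.

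To prove optimality of $\bar u$, first note that \eqref{E2.18} gives $y_{\hat u_{T_k}}\to y_{\bar u}$ in $L^2_{\sigma_s}(Q)$. Together with weak lower semicontinuity of the control term this yields $J(\bar u)\le\liminf_k J(\hat u_{T_k})$. Next, $J(\hat u_{T_k})=J_{T_k}(u_{T_k})+R_k$, where the tail $R_k$ is the integral over $(T_k,\infty)$. The tail tends to $0$ because the states $y_{\hat u_{T_k}}$ are bounded in $L^2_{\lambda}(Q)$ for some $\lambda<\sigma_s$ (Theorem \ref{T2.1}), $y_d\in L^2_{\sigma_s}(Q)$, and the controls are bounded in $L^\infty$. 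Finally, for any $u\in\Uad$ the restriction $u|_{(0,T_k)}$ is admissible for $(\mathrm{P}_{T_k})$, so $J_{T_k}(u_{T_k})\le J_{T_k}(u|_{(0,T_k)})\le J(u)$. Chaining these gives $J(\bar u)\le J(u)$ for all $u\in\Uad$.

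Strong convergence is the main step. I first prove convergence of the weighted control norms. The chain above can be sharpened to
\[
\lim_k J(\hat u_{T_k}) = J(\bar u).
\]
Indeed, taking $u=\bar u$ gives $\limsup_k J(\hat u_{T_k})\le J(\bar u)$ once the tails vanish. Since the state part converges, it follows that $\|\hat u_{T_k}\|_{L^2_{\sigma_c}(Q_\omega)}\to\|\bar u\|_{L^2_{\sigma_c}(Q_\omega)}$. In the Hilbert space $L^2_{\sigma_c}(Q_\omega)$, weak convergence plus convergence of norms gives strong convergence there.

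The remaining obstacle is the change of weight from $L^2_{\sigma_c}$ to $L^p_{\lambda_c}$. I handle it with the uniform $L^\infty$ bound $2\gamma_{ad}$ on the differences, via the interpolation inequalities already used in the proof of Theorem \ref{T5.2}. For $\lambda_c\ge\sigma_c$,
\[
\|v\|_{L^p_{\lambda_c}(0,\infty;L^2(\omega))}\le (2\gamma_{ad})^{\frac{p-2}{p}}\|v\|_{L^2_{\sigma_c}(Q_\omega)}^{\frac 2p}.
\]
For $\lambda_c<\sigma_c$, I use the argument of \eqref{E4.1A}, which controls $\|v\|_{L^2_{\lambda_c}}$ by a power of $\|v\|_{L^2_{\sigma_c}}$. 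In both cases the $L^2_{\sigma_c}$ convergence transfers, and $\hat u_{T_k}\to\bar u$ in $L^p_{\lambda_c}(0,\infty;L^2(\omega))$.
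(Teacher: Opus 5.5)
Your proposal is correct and follows essentially the same route as the paper. That route is: weak$^*$ compactness of the extensions in $L^\infty(0,\infty;L^2(\omega))$; the chain $J(\bar u)\le\liminf_k J(\hat u_{T_k})\le\limsup_k J_{T_k}(u_{T_k})+\text{tails}$, with the tails vanishing because $\sigma_s-\lambda>0$; norm convergence in $L^2_{\sigma_c}(Q_\omega)$ upgrading weak to strong convergence; and the interpolation from the proof of Theorem~\ref{T5.2} to pass to $L^p_{\lambda_c}(0,\infty;L^2(\omega))$. The differences are only cosmetic: you compare with an arbitrary $u\in\Uad$ via $J_{T_k}(u|_{(0,T_k)})\le J(u)$ instead of a fixed minimizer $\tilde u$ of \Pb, and you explicitly identify the weak $L^2_{\sigma_c}$ limit with the weak$^*$ limit, which the paper takes for granted.
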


\begin{proof}
The existence of a solution $u_T$ for \PbT is standard. Given a family of solutions $\{u_T\}_{T > 0}$, we have that their extensions satisfy $\{\hat u_T\}_{T > 0} \subset \Uad$. This family is bounded in $L^\infty(0,\infty;L^2(\omega))$. This proves the existence of weak limits $\hat u_{T_k} \stackrel{*}{\rightharpoonup} \bar u$ in $L^\infty(0,\infty;L^2(\omega))$ as $T_k \to \infty$. For any of these limit points we also have that $\hat u_{T_k} \rightharpoonup \bar u$ in $U$. Since $\Uad$ is convex and closed in $U$ we have that $\bar u \in \Uad$. From \eqref{E2.18} we infer that $y_{\hat u_{T_k}} \to \bar y = y_{\bar u}$ in $Y_\lambda$ for every $\lambda > -2\Lambda_f$. Let  $\tilde u$ be a solution of \Pb and fix $\lambda \in (-2\Lambda_f,\sigma_s)$. Using the mentioned convergences and the optimality of every $u_{T_k}$ we obtain
\begin{align*}
&J(\bar u) \le \liminf_{k \to \infty}J(\hat u_{T_k}) \le \limsup_{k \to \infty}J(\hat u_{T_k}) \le \limsup_{k \to \infty}J_{T_k}(u_{T_k})\\
& + \limsup_{k \to \infty}\int_{T_k}^\infty[\e^{-\sigma_st}\|y_{\hat u_{T_k}} - y_d\|^2_{L^2(\Omega)} + \nu \e^{-\sigma_ct}\|u_0\|^2_{L^2(\omega)}]\dt \le \limsup_{k \to \infty}J_{T_k}(\tilde u)\\
&+  \limsup_{k \to \infty}\Big(2\e^{-(\sigma_s - \lambda)T_k}\|y_{\hat u_{T_k}}\|^2_{L^2_{\lambda}(Q)} + \int_{T_k}^\infty[[2\e^{-\sigma_st}\|y_d\|^2_{L^2(\Omega)} + \nu \e^{\sigma_ct}\|u_0\|^2_{L^2(\omega)}]\dt\Big)\\
& = J(\tilde u) = \inf\Pb.
\end{align*}
This implies that $\bar u$ is a solution of \Pb and $\lim_{k \to \infty}J(\hat u_{T_k})  = J(\bar u)$. From this convergence and the fact that $y_{\hat u_{T_k}} \to \bar y$ in $Y_{\sigma_s}$ we deduce $\|\hat u_{T_k}\|_{L^2_{\sigma_c}(Q_\omega)} \to \|\bar u\|_{L^2_{\sigma_c}(Q_\omega)}$. Hence, we get that $\hat u_{T_k} \to \bar u$ strongly in $L^2_{\sigma_c}(Q_\omega)$. As we proved at the beginning of the proof of Theorem \ref{T5.2}, this implies $\lim_{k \to \infty} \|\hat u_{T_k} - \bar u\|_{L^p_{\lambda_c}(0,\infty;L^2(\omega))} = 0$.
\end{proof}

\begin{theorem}
Let $\bar u$ be a strict local minimizer of \Pb. Then, there exist $T_0 \in (0,\infty)$ and a family $\{u_T\}_{T > T_0}$ of local minimizers to \PbT such that the convergence $\hat u_T \to \bar u$ in $L^p_{\sigma_c}(0,\infty;L^2(\omega))$ holds as $T \to \infty$.
\label{T6.2}
\end{theorem}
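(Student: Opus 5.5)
Since $\bar u$ is a strict local minimizer, we fix $\varepsilon > 0$ such that $J(\bar u) < J(u)$ for every $u \in \Uad$ with $0 < \|u - \bar u\|_{L^2_{\sigma_c}(Q_\omega)} \le \varepsilon$. We also fix the extension control $u_0 = \bar u$, so that $\hat u = \bar u$ on $(T,\infty)$. For each $T$ we then study the localized finite horizon problem
\[
\mbox{(P$_{T,\varepsilon}$)}\quad \min J_T(u),\quad u \in \UTad,\ \|u - \bar u\|_{L^2_{\sigma_c}(\omega \times (0,T))} \le \varepsilon .
\]
This admissible set is convex, closed, bounded in $L^\infty(0,T;L^2(\omega))$ and nonempty, since $\bar u_{|(0,T)}$ belongs to it. Existence of a solution $u_T$ therefore follows as in Theorem \ref{T4.1}. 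Note that $\|\hat u_T - \bar u\|_{L^2_{\sigma_c}(Q_\omega)} = \|u_T - \bar u\|_{L^2_{\sigma_c}(\omega\times(0,T))}$.

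The key step is to show that $\hat u_T \to \bar u$ strongly in $L^2_{\sigma_c}(Q_\omega)$ as $T \to \infty$. We take an arbitrary sequence $T_k \to \infty$. Boundedness in $L^\infty(0,\infty;L^2(\omega))$ gives a subsequence with $\hat u_{T_k} \stackrel{*}{\rightharpoonup} \tilde u$, and by convexity and closedness we have $\tilde u \in \Uad$ with $\|\tilde u - \bar u\|_{L^2_{\sigma_c}(Q_\omega)} \le \varepsilon$. We then repeat the chain of inequalities in the proof of Theorem \ref{T6.1}, using \eqref{E2.18} and the tail estimates, but comparing with $\bar u$ (which is admissible for every (P$_{T,\varepsilon}$)) instead of a global minimizer:
\[
J(\tilde u) \le \liminf_k J(\hat u_{T_k}) \le \limsup_k J_{T_k}(u_{T_k}) + \text{tails} \le \limsup_k J_{T_k}(\bar u) = J(\bar u).
\]
Strict local optimality then forces $\tilde u = \bar u$ and $J(\hat u_{T_k}) \to J(\bar u)$. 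Since the state parts converge (via $y_{\hat u_{T_k}} \to \bar y$ in $Y_{\sigma_s}$), we get convergence of $\|\hat u_{T_k}\|_{L^2_{\sigma_c}(Q_\omega)}$. Combined with weak convergence, this yields strong convergence in $L^2_{\sigma_c}(Q_\omega)$. The limit is independent of the subsequence, so the whole family converges.

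Next we upgrade the convergence and derive local optimality. Interpolating with the uniform $L^\infty(0,\infty;L^2(\omega))$ bound $2\gamma_{ad}$, exactly as in the estimates following \eqref{E4.1} and at the beginning of the proof of Theorem \ref{T5.2} (with weight $\sigma_c$ in place of $\lambda_c$), gives $\hat u_T \to \bar u$ in $L^p_{\sigma_c}(0,\infty;L^2(\omega))$. Strong convergence in $L^2_{\sigma_c}$ also yields $T_0$ such that $\|u_T - \bar u\|_{L^2_{\sigma_c}(\omega\times(0,T))} < \varepsilon/2$ for $T > T_0$. Consequently the localization constraint is inactive near $u_T$: any $u \in \UTad$ with $\|u - u_T\|_{L^2_{\sigma_c}(\omega\times(0,T))} \le \varepsilon/2$ is admissible for (P$_{T,\varepsilon}$), hence $J_T(u_T) \le J_T(u)$. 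This means that $u_T$ is a local minimizer of \PbT.

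The main obstacle I expect is the lower semicontinuity and tail control in the key step, namely justifying $J(\tilde u) \le \liminf_k J(\hat u_{T_k})$ and showing that the tail terms $\int_{T_k}^\infty$ vanish. For the state term this uses \eqref{E2.18} with some $\lambda \in (-2\Lambda_f,\sigma_s)$, together with a uniform bound $\|y_{\hat u_{T_k}}\|_{L^2_\lambda(Q)} \le C$ coming from \eqref{E2.3}. The control tail vanishes because $\bar u$ is bounded. Both points are handled as in Theorem \ref{T6.1}.
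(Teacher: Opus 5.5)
Your proposal is correct and takes essentially the same route as the paper. You localize to the closed $L^2_{\sigma_c}(Q_\omega)$-ball on which $\bar u$ is the unique minimizer, rerun the argument of Theorem \ref{T6.1} comparing with $\bar u$ to get $\hat u_T \to \bar u$ in $L^2_{\sigma_c}(Q_\omega)$, deduce that the ball constraint is inactive for large $T$ so that $u_T$ is a local minimizer of \PbT, and upgrade to $L^p_{\sigma_c}(0,\infty;L^2(\omega))$ by interpolating with the uniform $L^\infty(0,\infty;L^2(\omega))$ bound. Your explicit choice $u_0 = \bar u$ is harmless: for any other fixed $u_0$ the extensions differ only on $(T,\infty)$, and that difference tends to zero in $L^p_{\sigma_c}(0,\infty;L^2(\omega))$.
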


\begin{proof}
Since $\bar u$ is a strict local minimizer of \Pb, there exists $\rho > 0$ such that $J(\bar u) < J(u)$ for every $u \in \Uad \cap B_\rho(\bar u)$ with $u \neq \bar u$, where $B_\rho(\bar u)$ is the closed ball in $L^2_{\sigma_c}(Q_\omega)$ centered at $\bar u$ with radius $\rho > 0$. We consider the control problems
\[
\Pbr \quad \min_{u \in B_\rho(\bar u) \cap \Uad} J(u) \quad \text{ and }\quad \PbTr \quad \min_{u \in B_{T,\rho}(\bar u) \cap \UTad} J_T(u),
\]
where $B_{T,\rho}(\bar u) = \{u \in L^2_{\sigma_c}(Q_{T,\omega}) : \|u - \bar u\|_{L^2_{\sigma_c}(Q_{T,\omega})} \le \rho\}$. Obviously $\bar u$ is the unique solution of \Pbr. Existence of a solution $u_T$ of \PbTr is straightforward. Then, arguing as in the proof of Theorem \ref{T6.1} and using the uniqueness of the solution of \Pbr, we deduce the convergence $\hat u_T \to \bar u$ in $L^2_{\sigma_c}(Q_\omega)$ as $T \to \infty$. This implies the existence of $T_0 > 0$ such that $\|u_T - \bar u\|_{L^2_{\sigma_c}(Q_{T,\omega})} \le \|\hat u_T - \bar u\|_{L^2_{\sigma_c}(Q_\omega)} < \rho$ for all $T > T_0$. Hence, $u_T$ is also a local minimizer of \PbT for $T > T_0$. The strong convergence $\hat u_T \to \bar u$ in $L^p_{\sigma_c}(0,\infty;L^2(\omega))$ follows from the convergence in $L^2_{\sigma_c}(Q_\omega)$ and the fact that $\|\hat u_T\|_{L^\infty(0,\infty;L^2(\omega))} \le \gamma$ for every $T > 0$.
\end{proof}

\begin{theorem}
Assume that $\Uad$ is defined by \eqref{E1.2} or \eqref{E1.3}, $\sigma_c < \sigma_s + \Lambda_f(q + 2)$, and $y_d \in L^p_{-2\Lambda_f}(0,\infty;L^2(\Omega))$. Let $\bar u$ be a local minimizer of \Pb satisfying the second-order sufficient optimality condition given in Theorems \ref{T5.2} or \ref{T5.4}. Let $\{u_T\}_{T > T_0}$ be a family of local minimizers of problems \PbT such that $\hat u_T \to \bar u$ strongly in $L^p_{\lambda_c}(0,\infty;L^2(\omega))$. Then, there exists $T_0^* \in (T_0,\infty)$ such that $J_T(u_T) \le J_T(\bar u)$ for every for every $T \ge T^*_0$.
\label{T6.3}
\end{theorem}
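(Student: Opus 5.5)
The plan is to argue by contradiction, re-running the compactness argument in the proof of Theorem \ref{T5.2} with $J$ replaced by $J_T$ and the infinite-horizon adjoint replaced by the finite-horizon one. Suppose there is a sequence $T_k\to\infty$ with $J_{T_k}(u_{T_k})>J_{T_k}(\bar u)$, where $\bar u$ stands for its restriction to $(0,T_k)$, which lies in $\UTad$. Set $\rho_k=\|u_{T_k}-\bar u\|_{L^2_{\sigma_c}(Q_{T_k,\omega})}>0$ and $v_k=\rho_k^{-1}(\bar u-u_{T_k})$, extended by zero for $t>T_k$. Then $\|v_k\|_{L^2_{\sigma_c}(Q_\omega)}=1$, and for a subsequence $v_k\rightharpoonup v$ in $L^2_{\sigma_c}(Q_\omega)$. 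By hypothesis $\hat u_{T_k}\to\bar u$ in $L^p_{\lambda_c}(0,\infty;L^2(\omega))$, so $\rho_k\to0$. By Theorem \ref{T3.1}, the states $y_{T_k,u_{T_k}}$, which coincide with $y_{\hat u_{T_k}}$ on $(0,T_k)$, converge to $\bar y$ in $Y_{\sigma_s}$.

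Next I collect first-order information. Since $u_{T}$ is a local minimizer of \PbT and $\UTad$ is convex, $J_T'(u_T)(\bar u-u_T)\ge0$. Here $J_T'(u)v=\int_0^T\!\int_\omega(\varphi_{T,u}+\nu\e^{-\sigma_ct}u)v$, and $\varphi_{T,u}$ solves \eqref{E4.11} on $Q_T$ with terminal condition $\varphi(T)=0$. The mean value theorem gives $J_{T_k}'(u_{\theta_k})v_k<0$ for an intermediate control $u_{\theta_k}$.

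The key auxiliary fact is that $\varphi_{T_k,u_k}\to\bar\varphi$ in $Y_{-\lambda}\cap C_{-\lambda}(\bar Q)$ for every $\lambda<\sigma_s$, whenever $u_k\to\bar u$ as above. To prove it, I extend $\varphi_{T_k,u_k}$ by zero as in Step I of Theorem \ref{T4.2}. The bound \eqref{E4.7} and the $L^\infty$ estimate \eqref{E4.9} are uniform in $T$, which gives boundedness in $C_{-\sigma_s}(\bar Q)$. On each $Q_T$, the difference from $\bar\varphi$ is controlled by $\|\bar\varphi(T_k)\|_{L^\infty(\Omega)}\le C\e^{-\frac{\sigma_s}{2}T_k}$ and by $f'(y_{u_k})\to f'(\bar y)$ in $L^\infty(Q_T)$, exactly as in Step III of Theorem \ref{T4.2}. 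Lemma \ref{L2.1} then upgrades this to the weighted norms. With this in hand, passing to the limit in both one-sided inequalities gives $J'(\bar u)v=0$, using \eqref{E4.18} as in Step I of Theorem \ref{T5.2}. The pointwise sign conditions defining $C_{\bar u}$ are inherited by $-v$ from $u_{T_k}\in\UTad$ as in that Step I, and the equality part comes from the multiplier identity of Lemma \ref{L5.1}, or from \eqref{E4.22} in Case II. Hence $-v\in C_{\bar u}$.

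For the second-order step I plan to expand around $u_{T_k}$ rather than around $\bar u$. The expansion reads $J_{T_k}(\bar u)-J_{T_k}(u_{T_k})=J_{T_k}'(u_{T_k})(\bar u-u_{T_k})+\tfrac{\rho_k^2}{2}J_{T_k}''(u_{\vartheta_k})v_k^2$. The left side is negative and the first term on the right is nonnegative, so $J_{T_k}''(u_{\vartheta_k})v_k^2<0$. In Case I the constraint term has to be added through the Lagrangian with $\bar\mu$. I will first try to absorb it using $\bar\mu(t)(\|u_{T_k}(t)\|^2-\gamma^2)\le0$ and $\bar\mu\|\bar u(t)\|^2=\bar\mu\gamma^2$, so that $\frac{\partial^2\mathcal L}{\partial u^2}(u_{\vartheta_k},\bar\mu)v_k^2\le o(1)$; checking this sign carefully is one delicate point.

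Then I pass to the $\liminf$ exactly as in Step II of Theorem \ref{T5.2}. The term $\int f''(y)\varphi z^2$ is handled on $(0,T)$ by the strong $L^2(Q_T)$ convergence of the linearized states. On the tail, the weighted estimate built from $\eta,\rho,\lambda,\hat\lambda,\delta$ above \eqref{E68} is used. This requires the finite-horizon adjoints to be bounded in $C_{\frac{\delta}{2}-2(\sigma_s+\Lambda_f)}(\bar Q)$ uniformly in $T$, which follows from the proof of Corollary \ref{C4.2} applied with a zero terminal condition and uses $y_d\in L^p_{-2\Lambda_f}$ and $\sigma_c<\sigma_s+\Lambda_f(q+2)$. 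This yields $\frac{\partial^2\mathcal L}{\partial u^2}(\bar u,\bar\mu)v^2\le0$, hence $v=0$ by the second-order sufficient condition. The surviving term $\nu\|v_k\|^2_{L^2_{\sigma_c}}=\nu$ then contradicts the inequality, exactly as in Step III of Theorem \ref{T5.2}.

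I expect the main obstacle to be the uniform-in-$T$ weighted $L^\infty$ bounds and the convergence of the finite-horizon adjoints $\varphi_{T_k,u_k}$, including the stronger weight needed for the $f''$ term. The sign bookkeeping of the multiplier term in Case I is the other place where I expect difficulty.
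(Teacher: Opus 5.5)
Your overall strategy is the right one, and for the box constraints \eqref{E1.3} it works as you describe. That strategy is: argue by contradiction, expand $J_{T_k}$ about the stationary point $u_{T_k}$ using $J'_{T_k}(u_{T_k})(\bar u-u_{T_k})\ge 0$, normalize, and pass to the limit as in Step II of Theorem \ref{T5.2}.

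The gap is in Case I, \eqref{E1.2}, exactly at the point you flagged, and more careful bookkeeping with $\bar\mu$ cannot repair it. Write $\mathcal{L}_T(u,\mu)=J_T(u)+\frac{1}{2\gamma}\int_0^T\mu\|u\|^2_{L^2(\omega)}\dt$. Since the multiplier term is quadratic, one has exactly
\[
\frac{\partial^2\mathcal{L}_{T_k}}{\partial u^2}(u_{\vartheta_k},\bar\mu)v_k^2=J''_{T_k}(u_{\vartheta_k})v_k^2+\frac{1}{\gamma}\int_0^{T_k}\bar\mu(t)\|v_k(t)\|^2_{L^2(\omega)}\dt .
\]
The contradiction hypothesis together with $J'_{T_k}(u_{T_k})v_k\ge 0$ only yields $J''_{T_k}(u_{\vartheta_k})v_k^2<0$.

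The two facts you list point the wrong way. In Theorem \ref{T5.2}, $\bar\mu(\|u_k\|^2_{L^2(\omega)}-\gamma^2)\le 0$ helped because $\bar u$ was the expansion point and $u_k$ the competitor. Here the roles are reversed:
\[
\mathcal{L}_{T_k}(\bar u,\bar\mu)-\mathcal{L}_{T_k}(u_{T_k},\bar\mu)=J_{T_k}(\bar u)-J_{T_k}(u_{T_k})+\frac{1}{2\gamma}\int_0^{T_k}\bar\mu\big(\gamma^2-\|u_{T_k}\|^2_{L^2(\omega)}\big)\dt ,
\]
and the multiplier part is nonnegative. Moreover, $\frac{\partial\mathcal{L}_{T_k}}{\partial u}(u_{T_k},\bar\mu)$ is nonzero and has no sign in the direction $\bar u-u_{T_k}$. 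So in the limit you only obtain $J''(\bar u)v^2\le 0$. That does not contradict the Case I condition $J''(\bar u)v^2+\frac1\gamma\int_0^\infty\bar\mu\|v\|^2_{L^2(\omega)}\dt>0$ on $C_{\bar u}\setminus\{0\}$, so you never reach $v=0$.

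The missing ingredient is the multiplier of $u_T$ for \PbT itself. Set $\mu_T(t)=\|\varphi_{T,u_T}(t)+\nu\e^{-\sigma_c t}u_T(t)\|_{L^2(\omega)}$. The finite-horizon versions of Corollary \ref{C4.3} and Lemma \ref{L5.1} give
\[
u_T(t)=\proj_{B_\gamma}\big(-\tfrac1\nu\e^{\sigma_c t}\varphi_{T,u_T}(t)\big)\quad\text{and}\quad \frac{\partial\mathcal{L}_T}{\partial u}(u_T,\mu_T)=0 .
\]
Now $\mu_T\|u_T(t)\|^2_{L^2(\omega)}=\mu_T\gamma^2\ge\mu_T\|\bar u(t)\|^2_{L^2(\omega)}$ has the right sign. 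Hence $\mathcal{L}_{T_k}(\bar u,\mu_{T_k})<\mathcal{L}_{T_k}(u_{T_k},\mu_{T_k})$, and therefore $\frac{\partial^2\mathcal{L}_{T_k}}{\partial u^2}(u_{\vartheta_k},\mu_{T_k})v_k^2<0$. Equivalently, $J'_{T_k}(u_{T_k})(\bar u-u_{T_k})\ge\frac{1}{2\gamma}\int_0^{T_k}\mu_{T_k}\|\bar u-u_{T_k}\|^2_{L^2(\omega)}\dt$; this quantitative first-order bound is what produces the curvature term.

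It remains to show $\liminf_k\int_0^{T_k}\mu_{T_k}\|v_k\|^2_{L^2(\omega)}\dt\ge\int_0^\infty\bar\mu\|v\|^2_{L^2(\omega)}\dt$:
\begin{itemize}
\item Since $\mu_{T_k}\ge 0$, you may truncate to $(0,T)$.
\item On $(0,T)$, the $1$-Lipschitz projection formulas for $u_{T_k}$ and $\bar u$ give $|\mu_{T_k}(t)-\bar\mu(t)|\le 2\|\varphi_{T_k,u_{T_k}}(t)-\bar\varphi(t)\|_{L^2(\omega)}$. This tends to $0$ uniformly on $[0,T]$ by your adjoint convergence result.
\item Weak lower semicontinuity of $v\mapsto\int_0^T\bar\mu\|v\|^2_{L^2(\omega)}\dt$ on $L^2_{\sigma_c}(Q_\omega)$, followed by $T\to\infty$, concludes.
\end{itemize}
With this modification the rest of your argument goes through: $-v\in C_{\bar u}$, convergence of the $f''$ term, $v=0$, and the final contradiction with $\nu>0$.
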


The proof of this theorem is analogous to that of \cite[Theorem 4.3]{Casas-Kunisch2023C} with obvious modifications.

\begin{theorem}
Under the assumptions and notation of Theorem \ref{T6.3} and supposing that $\sigma_c \le \lambda_c$ and $g \in L^p_{\lambda_c}(Q) \cap L^2_{\sigma_s}(Q)$, there exist a constant $C$ depending only on $\kappa$, $\sigma_s$, and $\Lambda_f$, and $T^* \in [T_0,\infty)$ such that for every $T > T^*$
\begin{equation}
\|u_T - \bar u\|_{L^2_{\sigma_c}(Q_{T,\omega})}
\le C\Big(\e^{-\frac{\sigma_s}{2}T}\big(\|y_T(T)\|_{L^2(\Omega)} + 1\big) {+} \|y_d\|_{L^2_{\sigma_s}(Q^T)} {+} \|g\|_{L^2_{\sigma_s}(Q^T)}\Big),\label{E6.2}
\end{equation}
where $Q^T = \Omega \times (T,\infty)$ and $y_T = y_{T,u_T}$. Moreover, we have
\begin{align}
&\|y_T - \bar y\|_{L^2_{\sigma_s}(0,T;H^1(\Omega))} + \|y_T - \bar y\|_{L^\infty_{\sigma_s}(0,T;L^2(\Omega))} \le C_{3,M}\|u_T - \bar u\|_{L^2_{\sigma_c}(Q_{T,\omega})},\label{E6.3}\\
&\|y_T - \bar y\|_{C_{\sigma_s}(\bar Q_T)} \le C_{2,M}(2\gamma_{ad})^{\frac{p - 2}{p}}\|u_T - \bar u\|^{\frac{2}{p}}_{L^2_{\sigma_c}(Q_{T,\omega})},\label{E6.4}
\end{align}
where $\gamma_{ad} = \sup\{\|v\|_{L^2(\Omega)} : v \in \K\}$, and $C_{2,M}$ and $C_{3,M}$ were introduced in \eqref{E3.12}.
\label{T6.4}
\end{theorem}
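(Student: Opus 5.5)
We combine the quadratic growth \eqref{E5.5} with the optimality of $u_T$ for \PbT from Theorem \ref{T6.3}. Let $\tilde u_T \in \Uad$ be $u_T$ on $(0,T)$ continued beyond $T$ by a convenient control. The simplest choice is $\bar u$ itself. Since $\hat u_T \to \bar u$ in $L^p_{\lambda_c}(0,\infty;L^2(\omega))$ and $\sigma_c \le \lambda_c$, we have $\|\tilde u_T - \bar u\|_{L^2_{\sigma_c}(Q_\omega)} = \|u_T-\bar u\|_{L^2_{\sigma_c}(Q_{T,\omega})} \le \varepsilon$ for $T$ large. Then \eqref{E5.5} gives
\[
\frac{\kappa}{2}\|u_T - \bar u\|^2_{L^2_{\sigma_c}(Q_{T,\omega})} \le J(\tilde u_T) - J(\bar u).
\]
Split $J = J_T + J^T$, where $J^T$ is the tail integral over $(T,\infty)$. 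By Theorem \ref{T6.3}, $J_T(u_T) \le J_T(\bar u)$. The state of $\tilde u_T$ on $(0,T)$ is $y_T$, and the control contributions on $(T,\infty)$ cancel because both controls equal $\bar u$ there. Hence
\[
\frac{\kappa}{2}\|u_T-\bar u\|^2 \le \frac12\int_T^\infty \e^{-\sigma_s t}\Big(\|y_{\tilde u_T}(t)-y_d(t)\|^2_{L^2(\Omega)} - \|\bar y(t) - y_d(t)\|^2_{L^2(\Omega)}\Big)\dt.
\]
Write the bracket as $(y_{\tilde u_T}-\bar y,\ y_{\tilde u_T}+\bar y-2y_d)$.

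The key step is to estimate the tail difference $w = y_{\tilde u_T}-\bar y$ on $(T,\infty)$. It solves a linear equation of the type in Lemma \ref{L3.2} with $b = f'(\cdot)\ge\Lambda_f$, zero right-hand side and initial value $w(T) = y_T(T)-\bar y(T)$. The energy estimate of Step I of Lemma \ref{L3.2}, started at $T$ with weight $\e^{-\sigma_s t}$ and $\sigma_s > -2\Lambda_f$, gives
\[
\|w\|_{L^2_{\sigma_s}(Q^T)} \le C\e^{-\frac{\sigma_s}{2}T}\|w(T)\|_{L^2(\Omega)} \le C\e^{-\frac{\sigma_s}{2}T}\big(\|y_T(T)\|_{L^2(\Omega)}+\|\bar y(T)\|_{L^2(\Omega)}\big).
\]
We also need tail bounds for $y_{\tilde u_T}$ and $\bar y$ in $L^2_{\sigma_s}(Q^T)$. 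For these we repeat the estimate of Theorem \ref{T2.1} on $(T,\infty)$, which gives $\|y\|_{L^2_{\sigma_s}(Q^T)}\le C(\e^{-\frac{\sigma_s}{2}T}\|y(T)\|_{L^2(\Omega)} + \|g+\bar u\chi_\omega\|_{L^2_{\sigma_s}(Q^T)})$. The control tail contributes $\|\bar u\|_{L^2_{\sigma_s}(Q^T_\omega)}\le \gamma_{ad}\e^{-\frac{\sigma_s}{2}T}/\sqrt{\sigma_s}$, which is the source of the ``$+1$'' in \eqref{E6.2}. Finally, $\e^{-\frac{\sigma_s}{2}T}\|\bar y(T)\|$ is bounded by $C\e^{-\frac{\sigma_s}{2}T}(\|y_T(T)\|+1)$ plus a term controlled by \eqref{E3.12}. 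More simply, $\bar y\in C_{\lambda_c}$ gives $\e^{-\frac{\sigma_s}{2}T}\|\bar y(T)\|\le C\e^{-\frac{\sigma_s-\lambda_c}{2}T}$, which can be absorbed into the ``$+1$'' term.

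Cauchy--Schwarz then bounds the right-hand side by
\[
C\,\e^{-\frac{\sigma_s}{2}T}\big(\|y_T(T)\|+1\big)\cdot\Big(\e^{-\frac{\sigma_s}{2}T}\big(\|y_T(T)\|+1\big)+\|y_d\|_{L^2_{\sigma_s}(Q^T)}+\|g\|_{L^2_{\sigma_s}(Q^T)}\Big).
\]
This is at most $C$ times the square of the right-hand side of \eqref{E6.2}, and taking square roots gives \eqref{E6.2}. The main obstacle is to make the tail constants independent of $T$, which requires starting the weighted energy estimates at time $T$ rather than $0$. One must also check that the lower bound $\Lambda_f$ for $f'$ along intermediate states is what keeps $\sigma_s/2+\Lambda_f>0$.

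For \eqref{E6.3} and \eqref{E6.4}: on $(0,T)$, $y_T = y_{\tilde u_T}$, so \eqref{E3.12} applies with $\lambda=\lambda_c$. The norms in $\sigma_s$ are dominated by those in $\lambda_c\le\sigma_s$, and $\|\tilde u_T-\bar u\|_{L^2_{\lambda_c}}\le\|\tilde u_T-\bar u\|_{L^2_{\sigma_c}}$ because $\sigma_c\le\lambda_c$; this gives \eqref{E6.3}. For \eqref{E6.4}, we use the $L^p_{\lambda_c}$ version of \eqref{E3.12} together with the interpolation $\|v\|_{L^p_{\lambda_c}}\le (2\gamma_{ad})^{\frac{p-2}{p}}\|v\|^{2/p}_{L^2_{\lambda_c}}$ from the proof of Theorem \ref{T5.2}.
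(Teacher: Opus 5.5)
Your overall structure is the paper's: extend $u_T$ by $\bar u$, combine \eqref{E5.5} with $J_T(u_T)\le J_T(\bar u)$ so the control tails cancel, and run a weighted energy estimate on $(T,\infty)$ started from $y_T(T)$. Your derivations of \eqref{E6.3} and \eqref{E6.4} are fine; using \eqref{E3.12} with $\lambda=\lambda_c$ instead of $\lambda=\sigma_s$ only changes which constants $C_{2,M},C_{3,M}$ appear.

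The gap is in \eqref{E6.2}. Because you keep $-\|\bar y-y_d\|^2$ and factor the bracket, both Cauchy--Schwarz factors need $\e^{-\frac{\sigma_s}{2}T}\|\bar y(T)\|_{L^2(\Omega)}\le C\e^{-\frac{\sigma_s}{2}T}(\|y_T(T)\|_{L^2(\Omega)}+1)$ with $C$ independent of $T$. Neither of your justifications gives this.
\begin{itemize}
\item The ``simpler'' one is false. $\bar y\in C_{\lambda_c}[0,\infty;L^2(\Omega))$ only yields $\e^{-\frac{\sigma_s}{2}T}\|\bar y(T)\|_{L^2(\Omega)}\le C\e^{-\frac{\sigma_s-\lambda_c}{2}T}$. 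Since $\lambda_c>-2\Lambda_f\ge 0$, this bound is not $O(\e^{-\frac{\sigma_s}{2}T})$: its ratio to $\e^{-\frac{\sigma_s}{2}T}$ is $C\e^{\frac{\lambda_c}{2}T}\to\infty$. So it cannot be absorbed into the ``$+1$'' term.
\item The other one, writing $\|\bar y(T)\|\le\|y_T(T)\|+\|y_T(T)-\bar y(T)\|$ and invoking \eqref{E3.12}, puts the unknown $\|u_T-\bar u\|_{L^2_{\sigma_c}(Q_{T,\omega})}$ back on the right-hand side, multiplied by $C_{3,M}$. You never show how to absorb it with a $T$-independent constant of the allowed type.
\end{itemize}
As written, the chain does not produce \eqref{E6.2}.

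The missing observation is that you never need to estimate $\bar y$ on $(T,\infty)$. The term $-\frac12\int_T^\infty\e^{-\sigma_st}\|\bar y(t)-y_d(t)\|^2_{L^2(\Omega)}\dt$ is nonpositive and can simply be dropped. This gives $\kappa\|u_T-\bar u\|^2_{L^2_{\sigma_c}(Q_{T,\omega})}\le\|y_{\tilde u_T}-y_d\|^2_{L^2_{\sigma_s}(Q^T)}$. Your own tail estimate for $y_{\tilde u_T}$, started at $T$ from $y_T(T)$ with constant $(\sigma_s+2\Lambda_f)^{-1/2}$ and control tail $\gamma_{ad}\e^{-\frac{\sigma_s}{2}T}/\sqrt{\sigma_s}$, then yields \eqref{E6.2} at once. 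This is exactly how the paper argues.

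A smaller point concerns the smallness of $\|\tilde u_T-\bar u\|_{L^2_{\sigma_c}(Q_\omega)}$. It does not follow from convergence in $L^p_{\lambda_c}(0,\infty;L^2(\omega))$ ``because $\sigma_c\le\lambda_c$''; that inequality makes $L^2_{\sigma_c}$ the stronger norm. It follows instead from the uniform bound $\|u\|_{L^\infty(0,\infty;L^2(\omega))}\le\gamma_{ad}$ on $\Uad$ together with the interpolation \eqref{E4.1A}.
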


\begin{proof}
Without loss of generality, we take $u_0 = \bar u$ to extend every $u_T$. Let us take $\kappa$ and $\varepsilon$ as given in \eqref{E5.5}. The convergence $\hat u_T \to \bar u$ in $L^p_{\lambda_c}(0,\infty;L^2(\omega))$ and the boundedness of $\{\hat u_T\}_{T > T_0}$ imply that $\hat u_T \to \bar u$ in $L^2_{\sigma_c}(Q_\omega)$. Hence, we deduce the existence of $T^* \in [T^*_0,\infty)$ such that $\|\hat u_T - \bar u\|_{L^2_{\sigma_c}(Q_\omega)} \le \varepsilon$, where $T^*_0$ was given in Theorem \ref{T6.3}. Therefore, for $T \ge T^*$ we get with \eqref{E5.5}, by the choice of $u_0$, and the optimality of $u_T$
\begin{align*}
&\frac{\kappa}{2}\|u_T - \bar u\|^2_{L^2_{\sigma_c}(Q_{T,\omega})} = \frac{\kappa}{2}\|\hat u_T - \bar u\|^2_{L^2_{\sigma_c}(Q_\omega)}\le J(\hat u_T) - J(\bar u) \le J_T(u_T) - J_T(\bar u) \notag\\
& + \frac{1}{2}\int_T^\infty\e^{{-\sigma_st}}\|y_{\hat u_T}(t) - y_d(t)\|^2_{L^2(\Omega)}\dt \le \frac{1}{2}\int_T^\infty\e^{{-\sigma_st}}\|y_{\hat u_T}(t) - y_d(t)\|^2_{L^2(\Omega)}\dt.
\end{align*}
This yields
\begin{equation}
\|u_T - \bar u\|_{L^2_{\sigma_c}(Q_{T,\omega})} \le \frac{1}{\sqrt{\kappa}}\Big(\|y_{\hat u_T}\|_{L^2_{\sigma_s}(Q^T)} + \|y_d\|_{L^2_{\sigma_s}(Q^T)}\Big).
\label{E6.5}
\end{equation}
To estimate $\|y_{\hat u_T}\|_{L^2_{\sigma_s}(\Omega \times (T, \infty))}$ we observe that $y_{\hat u_T}$ satisfies
\[
\left\{
\begin{array}{l}
\displaystyle
\frac{\partial y_{\hat u_T}}{\partial t} + Ay_{\hat u_T} + f(y_{\hat u_T}) = g + \bar u\chi_\omega\ \mbox{ in } Q^T,\\[0.5ex] \partial_{n_A}y_{\hat u_T} = 0 \ \mbox{ on } \Gamma \times (T,\infty),\   y_{\hat u_T}(T) = y_T(T) \ \mbox{ in } \Omega.
\end{array}
\right.
\]
Testing this equation with $\e^{-\sigma_st}y_{\hat u_T}$, integrating between $T$ and $t > T$,  and arguing as in \eqref{E2.9} we infer
\begin{align*}
&\frac{1}{2}\e^{-\sigma_st}\|y_{\hat u_T}(t)\|^2_{L^2(\Omega)} + \Big(\frac{\sigma_s}{2} + \Lambda_f\Big)\int_T^t\e^{-\sigma_s t}\|y_{\hat u_T}(s)\|^2_{L^2(\Omega)}\ds\\
&+ \Lambda_A\int_T^t\e^{-\sigma_st}\|\nabla y_{\hat u_T}(s)\|^2_{L^2(\Omega)^n}\ds\\
&\le \frac{\e^{-\sigma_sT}}{2}\|y_T(T)\|^2_{L^2(\Omega)} + \|g + \bar u\chi_\omega\|_{L^2_{\sigma_s}(Q^T)}\|y_{\hat u_T}\|_{L^2_{\sigma_s}(Q^T)}\\
&  \le \frac{\e^{-\sigma_sT}}{2}\|y_T(T)\|^2_{L^2(\Omega)} + \frac{1}{\sigma_s + 2\Lambda_f}\|g + \bar u\chi_\omega\|^2_{L^2_{\sigma_s}(Q^T)} + \frac{1}{2}\Big(\frac{\sigma_s}{2} + \Lambda_f\Big)\|y_{\hat u_T}\|^2_{L^2_{\sigma_s}(Q^T)}.
\end{align*}
From this estimate we get
\begin{align*}
&\|y_{\hat u_T}\|_{L^2_{\sigma_s}(Q^T)} \le \frac{1}{\sqrt{\sigma_s + 2\Lambda_f}}\e^{-\frac{\sigma_s}{2}T}\|y_T(T)\|_{L^2(\Omega)}\\
&+ \frac{\sqrt{2}}{\sigma_s + 2\Lambda_f}\Big(\|g\|_{L^2_{\sigma_s}(Q^T)} + \gamma_{ad}\Big(\int_T^\infty\e^{-\sigma_st}\dt\Big)^{\frac{1}{2}}\Big)\\
&= \frac{1}{\sqrt{\sigma_s + 2\Lambda_f}}\e^{-\frac{\sigma_s}{2}T}\|y_T(T)\|_{L^2(\Omega)} + \frac{\sqrt{2}}{\sigma_s + 2\Lambda_f}\Big(\|g\|_{L^2_{\sigma_s}(Q^T)} + \frac{\gamma_{ad}}{\sqrt{\sigma_s}}\e^{-\frac{\sigma_s}{2}T}\Big).
\end{align*}
Inserting this estimate in \eqref{E6.5} we obtain \eqref{E6.2}.

Let us prove \eqref{E6.3}. Using \eqref{E3.12} we infer
\begin{align*}
&\|y_T - \bar y\|_{L^2_{\sigma_s}(0,T;H^1(\Omega))}\! +\! \|y_T - \bar y\|_{L^\infty_{\sigma_s}(0,T;L^2(\Omega))} \le \|y_{\hat u_T} - \bar y\|_{Y_{\sigma_s}}\\
&\le C_{3,M}\|\hat u_T - \bar u\|_{L^2_{\sigma_s}(Q)} =  C_{3,M}\|\hat u_T - \bar u\|_{L^2_{\sigma_s}(Q_T)} \le C_{3,M}\|\hat u_T - \bar u\|_{L^2_{\sigma_c}(Q_T)}.
\end{align*}
Finally, we prove \eqref{E6.4}. Using again \eqref{E3.12} we get
\begin{align*}
&\|y_T - \bar y\|_{C_{\sigma_s}(\bar Q_T)} \le \|y_{\hat u_T} - \bar y\|_{C_{\sigma_s}(\bar Q)} \le C_{2,M}\|\hat u_T - \bar u\|_{L^p_{\sigma_s}(0,\infty;L^2(\omega))}\\
&= C_{2,M}\|\hat u_T - \bar u\|_{L^p_{\sigma_s}(0,T;L^2(\omega))} \le C_{2,M}(2\gamma_{ad})^{\frac{p - 2}{p}}\|u_T - \bar u\|^{\frac{2}{p}}_{L^2_{\sigma_s}(Q_{T,\omega})}\\
&  \le C_{2,M}(2\gamma_{ad})^{\frac{p - 2}{p}}\|u_T - \bar u\|^{\frac{2}{p}}_{L^2_{\sigma_c}(Q_{T,\omega})}.
\end{align*}
\end{proof}

\begin{remark}
In the case where $0 \in \K$, the estimate \eqref{E6.2}  can be slightly simplified:
\[
\|u_T - \bar u\|_{L^2_{\sigma_c}(Q_{T,\omega})}
\le C\Big(\e^{-\frac{\sigma_s}{2}T}\|y_T(T)\|_{L^2(\Omega)} {+} \|y_d\|_{L^2_{\sigma_s}(Q^T)} {+} \|g\|_{L^2_{\sigma_s}(Q^T)}\Big).
\]
To prove this we extend $u_T$ by zero to $(0,\infty)$ rather than $\bar u$.
\end{remark}

\begin{remark}
The results of this paper can easily be extended to the case where  the tracking term is of the form $\frac{1}{2}\int_0^\infty\e^{-{\sigma_s}t}{\|y_u(t) - y_d(t)\|^2_{L^2(\Omega)}\chi_{\omega_{obs}}(t)}\,dt$, for some measurable set $\omega_{obs} \subset \Omega$. Indeed, this requires  only minor changes including modifying the right hand side of the first equation in \eqref{E4.2} to  be $\e^{-\lambda t}h \chi_{\omega_{obs}}$ and  replacing the first term on the right hand side of \eqref{E4.13} by $\int_Q[\e^{-\sigma_s t}\chi_{\omega_{obs}} - \varphi_u f''(y_u)]z_{v_1}z_{v_2}\dx\dt$, and analogously in \eqref{E5.2}.
\end{remark}

\section{Conclusions}
\label{S7}

Let us comment on the selection of the discount factors $\sigma_s$ and $\sigma_c$ to carry out the analysis of the control problem. The space of admissible controls is $L^\infty(0,\infty;L^2(\omega))$. On this space we have two functionals:
\[
F_s(u) = \frac{1}{2}\int_0^\infty\e^{-\sigma_st}\|y_u(t) - y_d(t)\|^2_{L^2(\Omega)}\dt\ \text{ and } \ F_c(u) = \frac{\nu}{2}\int_0^\infty\e^{-\sigma_ct}\|u(t)|^2_{L^2(\omega)}\dt.
\]
In the analysis of the control problem \Pb there are three important issues in this paper:\vspace{2mm}

{\em I - Well-posedness of the control problem: $\sigma_s > -2\Lambda_f$ and $\sigma_c > 0$.} Using Remark \ref{R2.2}, the existence of an optimal control follows under these selection of $\sigma_s$ and $\sigma_c$.\vspace{2mm}

{\em II - Necessary optimality conditions: $\sigma_s > -2(q+3)\Lambda_f$ and $\sigma_c > 0$.} The assumption on $\sigma_s$ is necessary to prove the differentiability of the control-to-state mapping, from where we infer the differentiability of the  cost functional $J$. These conditions on $\sigma_s$ and $\sigma_c$ are sufficient to obtain the first- and second-order necessary optimality conditions for the class of nonlinearities under consideration.\vspace{2mm}

{\em III - Second-order sufficient optimality conditions: $\sigma_s > -2(q+3)\Lambda_f$ and $0 < \sigma_c < \sigma_s + \Lambda_f(q + 2)$}. Under these selections on $\sigma_s$ and $\sigma_c$ second-order sufficient optimality conditions are proved.

\section*{Funding}

The first author was supported by MICIU/AEI/10.13039/501100011033/ under research project PID2023-147610NB-I00.


\end{document}